\def\qed{\hfill $\vcenter{\hrule height .3mm
		\hbox {\vrule width .3mm height 2.1mm \kern 2mm \vrule width .3mm
			height 2.1mm} \hrule height .3mm}$ \bigskip}
\def \RR {\mathbb R}
\def \NN {\mathbb N}
\def \EE {\mathbb E}
\def \ZZ {\mathbb Z}
\def \PP {\mathbb P}
\def \vphi {\varphi}
\def \FF {\mathcal{F}}
\newcommand\norm[1]{\left\lVert#1\right\rVert}
\newcommand\inner[2]{\langle#1,#2\rangle}
\newtheorem{theorem}{Theorem}
\newtheorem{lemma}{Lemma}
\newtheorem{proposition}{Proposition}
\theoremstyle{definition}
\newtheorem{definition}[theorem]{Definition}
\theoremstyle{remark}
\newtheorem{remark}[theorem]{Remark}
\newtheorem*{remark*}{Remark}
\long\def\symbolfootnotetext[#1]#2{\begingroup
	\def\thefootnote{\fnsymbol{footnote}}\footnotetext[#1]{#2}\endgroup}
\newcommand{\Ent}{\mathrm{Ent}}
\newcommand{\Cov}{\mathrm{Cov}}
\newcommand{\Id}{\mathrm{I}_d}
\begin{document}
\title{The CLT in high dimensions: quantitative bounds via martingale embedding}
\author{Ronen Eldan\thanks{Weizmann Institute of Science. Incumbent of the Elaine Blond career development chair. Partially supported by Israel Science Foundation grant 715/16.},~ Dan Mikulincer\thanks{Weizmann Institute of Science. Supported by an Azrieli Fellowship award from the Azrieli Foundation.}~~and Alex Zhai\thanks{Stanford University. Partially supported by a Stanford Graduate Fellowship.}}
\maketitle

\begin{abstract}
We introduce a new method for obtaining quantitative convergence rates for the central limit theorem (CLT) in a high dimensional setting. Using our method, we obtain several new bounds for convergence in transportation distance and entropy, and in particular: (a) We improve the best known bound, obtained by the third named author \cite{zhai2016multivariate}, for convergence in quadratic Wasserstein transportation distance for bounded random vectors; (b) We derive the first non-asymptotic convergence rate for the entropic CLT in arbitrary dimension, for general log-concave random vectors (this adds to \cite{courtade2017existence}, where a finite Fisher information is assumed); (c) We give an improved bound for convergence in transportation distance under a log-concavity assumption and improvements for both metrics under the assumption of strong log-concavity. Our method is based on martingale embeddings and specifically on the Skorokhod embedding constructed in \cite{eldan2016skorokhod}.
\end{abstract}

\section{Introduction}

Let $X^{(1)}, \ldots , X^{(n)}$ be i.i.d. random vectors in $\RR^d$. By
the central limit theorem, it is well-known that, under mild
conditions, the sum $\frac{1}{\sqrt{n}} \sum_{i = 1}^n X^{(i)}$
converges to a Gaussian. With $d$ fixed, there is an extensive
literature showing that the distance from Gaussian under various
metrics decays as $\frac{1}{\sqrt{n}}$ as $n \rightarrow \infty$, and
this is optimal.

However, in high-dimensional settings, it is often the case that the
dimension $d$ is not fixed but rather grows with $n$. It then becomes
necessary to understand how the convergence rate depends on dimension,
and the optimal dependence here is not well understood. We present a
new technique for proving central limit theorems in $\RR^d$ that is
suitable for establishing quantitative estimates for the convergence
rate in the high-dimensional setting. The technique, which is
described in more detail in Section \ref{sec: method description} below, is based on pathwise analysis: we first couple the random vector with a Brownian motion via a martingale embedding. This gives rise to a coupling between the sum and a Brownian motion for which we can establish bounds on the concentration of the quadratic variation. We use a multidimensional version of a Skorokhod embedding, inspired by a construction of the first named author from
\cite{eldan2016skorokhod}, as a manifestation of the martingale embedding.

Using our method, we prove new bounds on quadratic \emph{transportation}
(also known as ``Kantorovich'' or ``Wasserstein'') distance in the
CLT, and in the case of log-concave distributions, we also give bounds
for \emph{entropy} distance. Let $\mathcal{W}_2(A, B)$ denote the quadratic
transportation distance between two $d$-dimensional random vectors $A$
and $B$. That is,
$$\mathcal{W}_2\left(A,B\right) = \sqrt{\inf\limits_{\stackrel{(X,Y)\ s.t.}{
X \sim A,\ Y \sim B}}\EE\left[\norm{X-Y}_2^2\right]},$$
where the infimum is taken over all couplings of the vectors $A$ and $B$.
As a first demonstration of our method, we begin with an improvement to the best known convergence rate in the case of bounded random vectors.
\begin{theorem} \label{thm:w2-bounded}
  Let $X$ be a random $d$-dimensional vector. Suppose that $\EE[X] =
  0$ and $\norm{X} \leq \beta$ almost surely for some $\beta > 0$. Let
  $\Sigma = \Cov(X)$, and let $G \sim \mathcal{N}(0, \Sigma)$ be a
  Gaussian with covariance $\Sigma$. If $\{X^{(i)}\}_{i=1}^n$ are
  i.i.d copies of $X$ and $S_n = \frac{1}{\sqrt{n}}\sum_{i=1}^n
  X^{(i)}$, then
  \[ \mathcal{W}_2(S_n, G) \leq \frac{\beta\sqrt{d}\sqrt{32+2\log_2(n)}}{\sqrt{n}}. \]
\end{theorem}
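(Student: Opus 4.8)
The plan is to realize $S_n$ as the time-$1$ value of a continuous martingale adapted to a Brownian filtration, and then to compare that martingale with the Brownian motion $t\mapsto\Sigma^{1/2}W_t$, the comparison being governed entirely by how well the martingale's quadratic variation concentrates around the deterministic curve $t\mapsto t\Sigma$. (We may assume $\Sigma$ is nondegenerate; otherwise pass to the subspace spanned by $X$.)

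First I would apply the multidimensional Skorokhod embedding of \cite{eldan2016skorokhod} to $X$: in a suitable time parametrization on $[0,1]$ there is a Brownian motion and an adapted symmetric positive semidefinite process $(\Gamma_t)$ with $\int_0^1\Gamma_t\,dB_t\disteq X$ and $\EE\!\left[\int_0^1\Gamma_t^2\,dt\right]=\Sigma$, and --- this is where the hypothesis $\norm{X}\le\beta$ enters --- the construction comes with quantitative control of the diffusion in terms of $\beta$, to the effect that the accumulated quadratic variation $Q^{(1)}:=\int_0^1\Gamma_t^2\,dt$ of a single copy is a mean-$\Sigma$ random matrix of ``size $\beta^2$'' (operator norm $O(\beta^2)$, up to sub-exponential fluctuations). (I would need to check the exact form of this estimate in \cite{eldan2016skorokhod}.) Taking $n$ independent such embeddings driven by independent Brownian motions, placing the $i$-th on the interval $[(i-1)/n,\,i/n]$ after rescaling its clock by $n$, and normalizing by $\sqrt n$, a concatenation yields a single continuous martingale $(N_t)_{t\in[0,1]}$, adapted to one Brownian filtration, with $N_0=0$, $N_1\disteq S_n$, $N_t=\int_0^t\Lambda_s\,dW_s$ for an adapted positive semidefinite $\Lambda$, and quadratic variation $[N]_1=\tfrac1n\sum_{i=1}^n Q^{(i)}$, a normalized average of i.i.d.\ mean-$\Sigma$ matrices; more generally the increment of $[N]$ over a dyadic interval of length $2^{-k}$ is a normalized average of $\approx n2^{-k}$ of the $Q^{(i)}$.

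The heart of the argument is a comparison lemma bounding $\mathcal{W}_2^2(N_1,\mathcal{N}(0,\Sigma))$ by the expectation of a \emph{second-order} functional of the quadratic-variation path: morally, something like $\tfrac1{\beta^2}\sum_{k,j}\EE\norm{[N]_{(j+1)2^{-k}}-[N]_{j2^{-k}}-2^{-k}\Sigma}_{\mathrm{HS}}^2$, a sum over the dyadic subintervals of $[0,1]$ of the squared discrepancy between the quadratic-variation increment and its ideal Gaussian value $2^{-k}\Sigma$, normalized by the variance scale (which $\norm{X}\le\beta$ controls by $\beta^2$), the dyadic depth running only up to $\log_2 n$ since the finest block has length $\tfrac1n$. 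Granting such a lemma, it remains to observe that the increment over a dyadic interval of length $2^{-k}$ is a normalized sum of $\approx n2^{-k}$ centered i.i.d.\ $O(\beta^2)$-matrices, so its Hilbert--Schmidt norm has expected square $O(\beta^4 d\,2^{-k}/n)$; dividing by $\beta^2$, summing over the $2^k$ intervals at level $k$ gives $O(\beta^2 d/n)$ per level, and summing over the $\le\log_2 n$ levels gives a total of order $\beta^2 d\log n/n$ --- the asserted estimate, the explicit $32+2\log_2 n$ being the exact bookkeeping of these geometric sums together with a base case.

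I expect the genuine obstacle to be proving the comparison lemma with this \emph{first-order} (in the fluctuation) dependence. The two obvious couplings both fail: a Dambis--Dubins--Schwarz time change couples $N_1$ with $\mathcal{N}(0,\Sigma)$ at cost $\EE\norm{[N]_1-\Sigma}$, which is of order $\beta^2/\sqrt n$ and yields only the classical $n^{-1/4}$ rate; and the naive ``same Brownian motion'' coupling costs $\EE\int_0^1\norm{\Lambda_s-\Sigma^{1/2}}_{\mathrm{HS}}^2\,ds$, which does not decay at all, since $\Lambda$ oscillates violently within each block. The fix is to build the coupling recursively across dyadic time scales --- a Koml\'os--Major--Tusn\'ady-type construction carried out along the martingale, so that the clock discrepancy is rebalanced at every scale rather than accumulating near $t=1$; this recursion is exactly what produces the logarithmic factor. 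A secondary technicality is that $2^{-k}\Sigma$ minus the corresponding increment of $[N]$ may fail to be positive semidefinite because of fluctuations, which I would absorb by a stopping-time truncation whose contribution is negligible thanks to the boundedness of $X$.
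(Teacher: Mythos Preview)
Your proposal contains a genuine gap, and the route you sketch is not the one the paper takes. The crucial difference is how the $n$ embeddings are combined. You concatenate them \emph{sequentially}, placing the $i$-th embedding on $[(i{-}1)/n,\,i/n]$; at any instant $s$ only one copy is ``active'', the diffusion $\Lambda_s$ oscillates wildly within each block, and you correctly observe that the same-Brownian-motion coupling $\EE\int_0^1\norm{\Lambda_s-\Sigma^{1/2}}_{HS}^2\,ds$ does not decay. You then punt to a KMT-type dyadic comparison lemma that you do not prove. The paper instead runs all $n$ embeddings \emph{in parallel} on the same time axis: the sum process has diffusion $\tilde\Gamma_t=\bigl(\tfrac1n\sum_i(\Gamma_t^{(i)})^2\bigr)^{1/2}$, and now the law of large numbers acts pointwise in $t$, so $\tilde\Gamma_t^2$ concentrates around $\EE[\Gamma_t^2]$. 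With this in hand the ``naive'' coupling you dismiss is exactly what works: one couples $S_n=\int\tilde\Gamma_t\,d\tilde B_t$ with $G=\int\sqrt{\EE[\Gamma_t^2]}\,d\tilde B_t$ driven by the same Brownian motion, and a matrix inequality of the form $\mathrm{Tr}\bigl((\sqrt A-\sqrt B)^2\bigr)\le\mathrm{Tr}\bigl((A-B)^2A^\dagger\bigr)$ converts this to $\tfrac1n\mathrm{Tr}\bigl(\EE[\Gamma_t^4]\,\EE[\Gamma_t^2]^\dagger\bigr)$ integrated in $t$. No recursion is needed.

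Two further points where your picture diverges from the paper. First, the specific embedding chosen takes $C_t=A_t^\dagger$, which makes $\Gamma_t=A_tA_t^\dagger$ a \emph{projection}; this is what gives the clean uniform bound $\mathrm{Tr}\bigl(\EE[\Gamma_t^4]\,\EE[\Gamma_t^2]^\dagger\bigr)\le d$, rather than a vague ``size $\beta^2$'' estimate on the integrated quadratic variation. Second, the embedding terminates at a \emph{random} stopping time $\tau$ (not at time $1$), and the $\log n$ factor arises from integrating in $t$: the boundedness $\norm X\le\beta$ gives $\EE[\tau\mid\cF_t]\le t+\beta^2$ uniformly, hence sub-exponential tails $\PP(\tau>2i\beta^2)\le 2^{-i}$, so one uses the $d/n$ bound on $[0,\,2\beta^2\log_2 n]$ and the tail bound $4d\,\PP(\tau>t)$ beyond. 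That is the entire source of the logarithm---not a multiscale coupling.
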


Theorem \ref{thm:w2-bounded} improves a result of the third named
author \cite{zhai2016multivariate} that gives a bound of order
$\frac{\beta \sqrt{d} \log n}{\sqrt{n}}$ under the same conditions. It
was noted in \cite{zhai2016multivariate} that when $X$ is supported on
a lattice $\beta \ZZ^d$, then the quantity $\mathcal{W}_2(S_n, G)$ is
of order $\frac{\beta \sqrt{d}}{\sqrt{n}}$. Thus, Theorem
\ref{thm:w2-bounded} is within a $\sqrt{\log n}$ factor of optimal.

When the distribution of $X$ is isotropic and log-concave, we can
improve the bounds guaranteed by Theorem
\ref{thm:w2-bounded}. In this case, however, a more general bound has already been established in \cite{courtade2017existence}, see discussion below.

\begin{theorem} \label{thm:w2-log-concave}
  Let $X$ be a random $d$-dimensional vector. Suppose that the
  distribution of $X$ is log-concave and isotropic. Let $G \sim \mathcal{N}(0, \Id)$ be a standard Gaussian. If
  $\{X^{(i)}\}_{i=1}^n$ are i.i.d copies of $X$ and $S_n =
  \frac{1}{\sqrt{n}}\sum\limits_{i=1}^n X^{(i)}$, then there exists a
  universal constant $C>0$ such that, if $d \geq 8$,
  \[ \mathcal{W}_2(S_n, G) \leq \frac{Cd^{3/4}\ln(d)\sqrt{\ln(n)}}{\sqrt{n}}. \]
\end{theorem}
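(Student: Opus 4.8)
The plan is to run the martingale embedding / Skorokhod machinery outlined in Section~\ref{sec: method description}, but to exploit the extra structure coming from log-concavity and isotropy to get the improved exponent $d^{3/4}$ in place of the $\sqrt d$ of Theorem~\ref{thm:w2-bounded}. First I would recall the basic setup: using the Skorokhod-type embedding of \cite{eldan2016skorokhod}, realize each copy $X^{(i)}$ as $B^{(i)}_1$ for a standard Brownian motion $B^{(i)}$ run with a (matrix-valued) stopping-time clock, so that $M_t = \Gamma^{(i)}_t$ is a martingale with $M_1 = X^{(i)}$ and with quadratic variation $\langle M \rangle_t = \int_0^t \Sigma_s\,ds$ for an adapted process $\Sigma_s$ of positive semidefinite matrices. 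Summing the $n$ independent embeddings and rescaling by $n^{-1/2}$ gives a martingale $\{S_t\}_{t\in[0,1]}$ with $S_1 = S_n$ whose quadratic variation is $\frac1n\sum_i \int_0^1 \Sigma^{(i)}_s\,ds$. The standard comparison lemma then bounds $\mathcal W_2(S_n,G)^2$ by $\EE\big\|\frac1n\sum_i\int_0^1(\Sigma^{(i)}_s - \Id)\,ds\big\|_{HS}$-type quantities (after taking square roots of the matrix), so everything reduces to controlling how far the averaged quadratic variation is from the identity.

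The key point is to estimate, for the embedding of a single isotropic log-concave $X$, the deviation $\EE\,\mathrm{tr}\,\Cov\!\big(\int_0^1\Sigma_s\,ds\big)$ or, more precisely, $\EE\big[\big\|\int_0^1 (\Sigma_s-\Id)\,ds\big\|_{HS}^2\big]$. Here is where log-concavity enters: along the embedding, the conditional law at time $t$ is (a tilt/conditioning of) a log-concave measure, and $\Sigma_s$ is essentially the covariance of that conditional law, which is a renormalized log-concave measure; by the Kannan--Lov\'asz--Simonovits / thin-shell type bounds (the current best bound on the thin-shell constant is of order $d^{1/4}$ up to logs, due to Lee--Vempala / Guan), one controls the operator norm of $\Sigma_s$ and, crucially, the fluctuation of $\mathrm{tr}\,\Sigma_s$ around $d$. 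Feeding the $d^{1/4}$ thin-shell estimate into the pathwise bound, together with an It\^o-calculus computation of $\frac{d}{dt}\EE\|\Gamma_t\|^2$ and the fact that $\int_0^1\EE\,\mathrm{tr}\,\Sigma_s\,ds = \mathrm{tr}\,\Sigma = d$, should produce a per-coordinate variance bound that, after the $1/\sqrt n$ gain from independence (a sum of $n$ i.i.d.\ mean-zero matrices concentrates at rate $1/\sqrt n$), yields $\mathcal W_2(S_n,G)^2 \lesssim \frac{d^{3/2}\,\mathrm{polylog}}{n}$, i.e.\ the claimed $d^{3/4}$. The logarithmic factors $\ln(d)$ and $\sqrt{\ln n}$ come respectively from the polylog in the current thin-shell bound and from the dyadic/stopping-time truncation already present in the proof of Theorem~\ref{thm:w2-bounded} (one has to handle the event that the clock $\Sigma_s$ has not run its full course, or that some $\|X^{(i)}\|$ is atypically large, which for log-concave $X$ happens with probability polynomially small and contributes an extra $\sqrt{\ln n}$).

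I would organize the argument as: (1) state the martingale embedding and the comparison inequality bounding $\mathcal W_2(S_n,G)^2$ by the expected Hilbert--Schmidt deviation of the averaged quadratic variation; (2) prove a single-copy estimate bounding $\EE\|\int_0^1(\Sigma_s-\Id)\,ds\|_{HS}^2$ in terms of the thin-shell constant $\sigma_d \lesssim d^{1/4}\ln d$, using that the conditional measures stay log-concave along the embedding and using It\^o's formula to track the second moment; (3) tensorize over the $n$ i.i.d.\ copies to gain the $1/\sqrt n$ and collect the truncation error at scale $\sqrt{\ln n}$; (4) combine. The main obstacle I anticipate is step~(2): controlling $\Sigma_s$ uniformly along the whole path, since the conditional laws can become degenerate (the support shrinks as $s\to1$) and one must argue that the relevant fluctuations are still governed by a log-concave thin-shell bound rather than blowing up --- this likely requires a careful stopping-time argument truncating when $\|\Sigma_s\|_{op}$ or $\mathrm{tr}\,\Sigma_s$ leaves a good range, and checking that the truncation is rarely triggered. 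A secondary technical point is making sure the matrix square root in the comparison lemma is handled correctly (operator monotonicity of $A\mapsto\sqrt A$ giving $\|\sqrt A-\sqrt B\|_{HS}\lesssim$ something controllable), which is routine but must be done with the $d^{3/4}$ bookkeeping in mind.
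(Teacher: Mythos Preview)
Your high-level intuition --- martingale embedding, average $n$ copies, use log-concavity plus a thin-shell type quantity to get $d^{3/4}$ --- is right, but the concrete mechanism you describe is not the one the paper uses, and several of your specific claims are off.

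First, the embedding. The paper does \emph{not} use the fixed-time embedding with $\tau=1$ here; that one is reserved for the entropy theorems. For Theorem~\ref{thm:w2-log-concave} it uses the construction of Proposition~\ref{prop:stochastic-localization} with a carefully chosen $C_t$: namely $C_t=\min(A_t^\dagger,\mathrm{I_d})$ until a stopping time $T=1\wedge\inf\{t:\|A_t\|_{op}\ge 3\}$, and $C_t=A_t^\dagger$ afterwards. This choice forces $\Gamma_t=A_tC_t\preceq 3\mathrm{I_d}$ deterministically, so that the first branch of Theorem~\ref{main} is simply $\mathrm{Tr}(\EE[\Gamma_t^4]\EE[\Gamma_t^2]^\dagger)\le 9d$, and the second branch is $\le 4d\,\PP(\tau>t)$. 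The whole problem then reduces to showing a subexponential tail for the random stopping time $\tau$.

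Second, and this is the real content, the thin-shell type constant $\kappa_d$ enters \emph{only} through the tail of $\tau$, not through a variance bound on $\int_0^1(\Sigma_s-\mathrm{I_d})\,ds$ as you suggest. The key lemma (Lemma~\ref{expobound}) shows $\PP(\tau>s)\le e^{-cs}$ for $s>C\kappa_d^2\ln(d)^2$. Its proof is the technical heart: one first shows $\tau\le 1+9/T$ via a Brascamp--Lieb/Gronwall argument on $\Sigma_t$, and then controls $\PP(T\text{ small})=\PP(\max_{t\le \varepsilon}\|A_t\|_{op}\ge 3)$ by studying the martingale $\mathrm{Tr}(\tilde A_t^{\ln d})$ and invoking the third-moment tensor estimates of \cite[Lemma~3.2]{eldan2013thin} (this is exactly where $\kappa_d$ appears), followed by a Dambis--Dubins--Schwarz time change and Doob's maximal inequality. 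None of this machinery is in your sketch.

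Third, your account of the $\sqrt{\ln n}$ is wrong. It does not come from truncating large $\|X^{(i)}\|$; log-concave vectors in fact have subexponential tails, not polynomial ones. It comes, exactly as in the bounded case, from splitting the integral in Theorem~\ref{main} at the threshold $t_0\sim \kappa_d^2\ln(d)^2\ln(n)$: for $t\le t_0$ one uses the $9d/n$ bound, contributing $9d\,t_0/n$; for $t>t_0$ one uses $4d\,\PP(\tau>t)\le 4d\,e^{-ct}$, contributing $O(d/n)$. The $\ln(n)$ is the length of the first interval, and the $\kappa_d^2\ln(d)^2$ is the threshold beyond which Lemma~\ref{expobound} kicks in.

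Finally, the quantity you propose to bound, $\EE\big\|\int_0^1(\Sigma_s-\mathrm{I_d})\,ds\big\|_{HS}^2$, is not what Theorem~\ref{main} produces; the actual bound is a time integral of $\min\big(\tfrac{1}{n}\mathrm{Tr}(\EE[\Gamma_t^4]\EE[\Gamma_t^2]^\dagger),\,4\mathrm{Tr}(\EE[\Gamma_t^2])\big)$, and the matrix square-root issue is handled once and for all inside that theorem via Lemma~\ref{PositiveDefinite}, not at the application stage.

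In short: the missing idea is that one engineers $\Gamma_t$ to be uniformly bounded and pushes all the difficulty into the tail of the stopping time, which is then controlled by the operator-norm growth of $A_t$ using the $\kappa_d$-based third-moment bounds from \cite{eldan2013thin}. Your step~(2) as written would not yield the result.
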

\begin{remark}
  We actually prove the slightly stronger bound
  \[ \mathcal{W}_2(S_n, G) \leq \frac{C\kappa_d\ln(d)\sqrt{d\ln(n)}}{\sqrt{n}}, \]
  where
  \begin{equation} \label{kappa_d}
  \kappa_d := \sup_{\substack{\text{$\mu$ isotropic,} \\ \text{log-concave}}}\Big\lVert\int\limits_{\RR^d}x_1x\otimes x\mu(dx)\Big\rVert_{HS},
  \end{equation}
  as defined in \cite{eldan2013thin}. Results in \cite{eldan2013thin}
  and \cite{lee2016eldan} imply that $\kappa_d = O(d^{1/4})$, leading
  to the bound in Theorem \ref{thm:w2-log-concave}. If the
  \emph{thin-shell conjecture} (see \cite{anttila2003central}, as well \cite{bobkov2003central}) is true, then the bound is improved to
  $\kappa_d = O(\sqrt{\ln(d)})$, which yields
  \[ \mathcal{W}_2(S_n, G) \leq \frac{C\sqrt{d\ln(d)^3\ln(n)}}{\sqrt{n}}. \]
  By considering, for example, a random vector uniformly distributed on the unit cube, one can see that the above bound is sharp up to the logarithmic factors.
\end{remark}
\begin{remark}
  To compare with the previous theorem, note that if $\Cov(X) = \Id$,
  then $\EE \norm{X}^2 = d$. Thus, in applying Theorem
  \ref{thm:w2-bounded} we must take $\beta \ge \sqrt{d}$, and the
  resulting bound is then of order at least $\frac{d \sqrt{\log n}}{\sqrt{n}}$.
\end{remark}

Next, we describe our results regarding convergence rate in entropy. If $A$ and $B$ are random vectors such that $A$ has density $f$ with respect to the law of $B$, then relative entropy of $A$ with respect to $B$ is given  by
$$\mathrm{Ent}\left(A||B\right)  = \EE\left[\ln\left(f(A)\right)\right].$$
As a warm-up, we first use our method to recover the entropic CLT in any fixed dimension. In dimension one this was first established by Barron, \cite{barron1986entropy}. The same methods may also be applied to prove a multidimensional analogue. See \cite{bobkov2013rate} for  a more quantitative version of the theorem.
\begin{theorem} \label{thm:entropic clt}
	Suppose that $\mathrm{Ent}\left(X||G\right) < \infty$. Then one has
	$$\lim\limits_{n \to \infty}\mathrm{Ent}(S_n||G) = 0.$$
\end{theorem}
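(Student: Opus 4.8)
The plan is to realize $S_n$ as the terminal value of a continuous martingale produced by the martingale embedding of $X$ and thereby reduce the theorem to the single statement that the associated quadratic variation concentrates around $\Id$. Concretely, using the construction of \cite{eldan2016skorokhod}, represent the martingale embedding of $X$ as a process $(M_t)_{t\in[0,1]}$ with $M_0=0$, $M_1\sim X$, and $dM_t=\Gamma_t\,dB_t$, where $B$ is a standard $d$-dimensional Brownian motion and $\Gamma_t$ an adapted matrix-valued process; by Itô's isometry $\EE\int_0^1\Gamma_t\Gamma_t^T\,dt=\Cov(X)$, which is $\Id$ once we have reduced to the centered isotropic case. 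Taking independent copies $(M^{(i)},B^{(i)},\Gamma^{(i)})_{i=1}^n$ and setting $N_t:=n^{-1/2}\sum_{i=1}^n M^{(i)}_t$, the process $N$ is a continuous martingale with $N_1\sim S_n$ and $\langle N\rangle_t=\int_0^t Q_s\,ds$, where $Q_s:=n^{-1}\sum_{i=1}^n\Gamma^{(i)}_s(\Gamma^{(i)}_s)^T$. Writing $dN_t=\sqrt{Q_t}\,dW_t$ for a Brownian motion $W$ (Lévy's characterization) we get $N_1=\int_0^1\sqrt{Q_t}\,dW_t$, so the whole question becomes: how close is the law of this stochastic integral to $\mathcal N(0,\Id)$?

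\emph{Concentration of the quadratic variation.} Since $\langle N\rangle_1=n^{-1}\sum_{i=1}^n\int_0^1\Gamma^{(i)}_s(\Gamma^{(i)}_s)^T\,ds$ is an average of $n$ i.i.d.\ positive semidefinite matrices with mean $\Id$, the law of large numbers gives $\langle N\rangle_1\to\Id$ almost surely; with a mild moment (and nondegeneracy) input on $\Gamma$, which is available for bounded $X$, this upgrades to $\EE\,\mathrm{Tr}\big[\langle N\rangle_1-\Id-\log\langle N\rangle_1\big]\to 0$, in fact at rate $O(1/n)$.

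\emph{From the quadratic variation to entropy (the crux).} If the path $(Q_s)_{s\in[0,1]}$ were non-anticipating, then conditionally on it $N_1$ would be a centered Gaussian with covariance $\langle N\rangle_1$; thus $N_1$ would be a Gaussian mixture whose covariances average to $\Id$, and convexity of $\mu\mapsto\mathrm{Ent}(\mu\|G)$ would give $\mathrm{Ent}(S_n\|G)\le\tfrac12\,\EE\,\mathrm{Tr}\big[\langle N\rangle_1-\Id-\log\langle N\rangle_1\big]\to 0$. The real obstacle is that $Q_s$ is adapted rather than predetermined, so $N_1$ is only morally such a mixture; I expect to circumvent this by expressing $\mathrm{Ent}(S_n\|G)$ as a time integral of a relative-Fisher-information functional along an interpolating (Ornstein--Uhlenbeck / de~Bruijn) flow and bounding the instantaneous rate, at each noise level, in terms of the fluctuation of $\langle N\rangle_1$. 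Carrying this out for adapted $Q_s$ — through an Itô's-formula computation rather than a bare conditioning argument — is the heart of the matter and the place where the martingale embedding does its work.

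\emph{Removing regularity, and a remark on the rate.} The hypothesis $\mathrm{Ent}(X\|G)<\infty$ guarantees only that $X$ has a density and a finite second moment, possibly too weak for the embedding estimates above; so I would first reduce to bounded $X$ by truncation and rescaling, controlling the resulting error uniformly in $n$ by combining the entropy monotonicity $\mathrm{Ent}(S_n\|G)\le\mathrm{Ent}(X\|G)$ with a Gaussian-smoothing comparison, and only then run the pathwise argument. This reduction, routine in spirit, still needs some care given how weak the hypothesis is. Finally, I would note that the argument in fact produces a quantitative rate of order $1/n$ (with a constant depending on $X$ and $d$) for bounded $X$, far more than the stated qualitative limit.
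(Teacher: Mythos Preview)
Your proposal has a genuine gap precisely where you flag it. Controlling $\mathrm{Ent}(S_n\|G)$ by the concentration of the terminal quadratic variation $\langle N\rangle_1=\int_0^1 Q_s\,ds$ alone is not enough: the law of $N_1$ depends on the whole path $(Q_s)_{s\in[0,1]}$, and your ``Gaussian mixture'' heuristic fails exactly because $Q_s$ is adapted. The de~Bruijn/Fisher-information route you gesture at is not developed, and it is not the mechanism the paper uses. What actually closes the gap is a Girsanov argument: one writes $N_1=\int_0^1 F_s\,dW_s+\int_0^1 u_s\,ds$ for a \emph{deterministic} $F_s$ (namely $F_s=\EE[\tilde\Gamma_s]$ with $\tilde\Gamma_s=\sqrt{Q_s}$) and an explicit adapted drift $u_t=\int_0^t\frac{\tilde\Gamma_s-F_s}{1-s}\,dW_s$, and then bounds $\mathrm{Ent}(S_n\|G)$ by $\tfrac12\int_0^1\EE\|F_t^{-1}u_t\|^2\,dt$. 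This yields the pointwise-in-$t$ bound $\mathrm{Ent}(S_n\|G)\le \int_0^1 \tfrac{\mathrm{Tr}(\EE[\Gamma_t^2]-\EE[\tilde\Gamma_t]^2)}{(1-t)^2}\big(\int_t^1\sigma_s^{-2}\,ds\big)\,dt$, which is not visible from $\langle N\rangle_1$.

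Equally important is the choice of embedding and the use of the hypothesis. The paper does \emph{not} truncate or reduce to bounded $X$; instead it takes the specific F\"ollmer/stochastic-localization embedding with $C_t=\frac{1}{1-t}\Id$, so that $\Gamma_t=\frac{\mathrm{Cov}(Y_1\mid\mathcal F_t)}{1-t}$ and the finite-entropy assumption becomes, via the F\"ollmer drift identity, $\int_0^1\frac{\mathrm{Tr}\,\EE[(\Gamma_t-\Id)^2]}{1-t}\,dt=2\,\mathrm{Ent}(X\|\gamma)<\infty$. This identity does double duty: it supplies the dominating function for dominated convergence, and (through a short argument on the evolution of $\EE[\mathrm{Cov}(Y_1\mid\mathcal F_t)]$) it forces a uniform lower bound $\sigma_t\ge m>0$ on the smallest eigenvalue of $\EE[\Gamma_t]$, which is needed for the $F_t^{-1}$ in the Girsanov bound. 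Your truncation detour would forfeit both of these: after truncation the covariance changes, the embedding is no longer the entropy-minimizing one, and you lose the clean link between $\mathrm{Ent}(X\|G)$ and the integrability of $\Gamma_t$. In short, the missing ingredients are (i) the Girsanov-based entropy inequality for martingales with a deterministic reference covariance, and (ii) the specific F\"ollmer embedding that converts the hypothesis $\mathrm{Ent}(X\|G)<\infty$ directly into the two analytic inputs (domination and $\sigma_t\ge m$) required to pass to the limit.
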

The next result gives the first non-asymptotic convergence rate for the entropic CLT, again under the log-concavity assumption (other non-asymptotic results appear in previous works, notably \cite{courtade2017existence}, but require additional assumptions; see below).
\begin{theorem} \label{thm:ent-log-concave}
   Let $X$ be a random $d$-dimensional vector. Suppose that the
  distribution of $X$ is log-concave and isotropic. Let $G \sim \mathcal{N}(0,
  \mathrm{I}_d)$ be a standard Gaussian. If $\{X^{(i)}\}_{i=1}^n$ are i.i.d copies of $X$ and $S_n
  = \frac{1}{\sqrt{n}}\sum\limits_{i=1}^n X^{(i)}$ then
  \[ \Ent(S_n||G) \leq \frac{C d^{10}(1 +  \Ent(X||G))}{n}, \]
  for a universal constant $C > 0$.
\end{theorem}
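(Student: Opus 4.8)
The plan is to run the martingale-embedding machinery already used for Theorem~\ref{thm:w2-log-concave}, but to upgrade the conclusion from a transportation bound to a bound on relative entropy; since entropy behaves quadratically in the ``amplitude'' of the error, this forces a genuinely second-order analysis, which is what produces the $1/n$ rate rather than $1/\sqrt n$. First I would isolate the single-vector input: using the Skorokhod/martingale embedding of \cite{eldan2016skorokhod}, realize $X = X_1$ for a martingale $(X_t)_{t\in[0,1)}$ with $X_0 = 0$ and $dX_t = \Gamma_t\,dB_t$, where $B$ is a standard $d$-dimensional Brownian motion and $\Gamma_t$ is a symmetric positive semidefinite adapted matrix process. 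The facts I would extract, all consequences of isotropy and log-concavity of $X$, are: (i) $\int_0^1 \EE[\Gamma_t^2]\,dt = \Cov(X) = \Id$; (ii) an a.s.\ operator-norm control of $\Gamma_t$ that is allowed to deteriorate as $t\uparrow 1$ — this is where the factor $1+\Ent(X\|G)$ (or an equivalent moment bound on $X$) enters; and (iii) a quantitative ``thin-shell''-type estimate bounding $\EE\,\lVert \Gamma_t^2 - \EE[\Gamma_t^2]\rVert_{HS}$ and low moments thereof by a polynomial in $d$ — here I would use crude estimates rather than try to exploit $\kappa_d$.

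Next comes the tensorization. Take i.i.d.\ embeddings $(X_t^{(i)})$ driven by independent Brownian motions $B^{(i)}$ and set $S_n(t) = \frac{1}{\sqrt n}\sum_{i=1}^n X_t^{(i)}$, a continuous martingale with $S_n(1) = S_n$ and quadratic-variation rate $A_t := \frac1n \sum_{i=1}^n (\Gamma_t^{(i)})^2$. By independence $\EE[A_t] = \EE[\Gamma_t^2] =: \Lambda_t$ with $\int_0^1 \Lambda_t\,dt = \Id$, and fact (iii) yields $\EE\,\lVert A_t - \Lambda_t\rVert_{HS}^2 = O(\mathrm{poly}(d)/n)$ for $t$ bounded away from $1$, while near $t=1$ I would truncate using fact (ii). Passing from $A_t$ to $A_t^{1/2}$ and using Doob's representation, write $S_n(1) = W_1 + \int_0^1 E_t\,dW_t$ for a standard Brownian motion $W$ adapted to $S_n$, where $E_t := A_t^{1/2} - \Id$ satisfies both $\EE\int_0^1 \lVert E_t\rVert_{HS}^2\,dt = O(\mathrm{poly}(d)/n)$ and $\EE\,\lVert \int_0^1 E_t\,dt\rVert_{HS}^2 = O(\mathrm{poly}(d)/n)$, the latter because $\int_0^1 \Lambda_t\,dt = \Id$.

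The crux is turning this coupling into a bound on $\Ent(S_n\|G)$. For the Wasserstein bound $\EE\,\lVert S_n(1) - W_1\rVert^2$ alone sufficed; for entropy one must exploit the smoothing built into the process. I would invoke a de Bruijn-type identity expressing $\Ent(S_n\|G)$ as an integral over $s \ge 0$ of the Fisher-information defect of $S_n + \sqrt s\,Z$ (with $Z$ an independent standard Gaussian), noting that adding $\sqrt s\,Z$ just continues the martingale $S_n(t)$ past time $1$ with diffusion coefficient $\Id$, i.e.\ with $E_t$ replaced by $0$ on $(1,1+s)$. Using the pathwise (conditional-expectation) formula for the score function of the terminal value of an It\^o martingale, one can bound this Fisher-information defect by a suitably weighted $L^2$-norm of $A_t - \Id$ along the trajectory, with weight integrable in $s$; feeding in the estimates of the previous step gives $\Ent(S_n\|G) = O\big(\mathrm{poly}(d)\,(1+\Ent(X\|G))/n\big)$. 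The exponent $d^{10}$ then just records the accumulated crude losses — the operator-norm blow-up of $\Gamma_t$, the thin-shell moments from (iii), the square in passing from $A_t$ to $A_t^{1/2}$, and the weight in the Fisher-information integral — with no attempt at optimization.

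I expect the main obstacle to be precisely this last step: proving a clean quantitative bound on $\Ent(S_n\|G)$ (equivalently, on the Fisher-information defect of the heat-smoothed $S_n$) \emph{solely} in terms of the $L^2$-fluctuation of the quadratic variation $A_t$ around $\Id$, and controlling it uniformly enough that the region $t\uparrow 1$ — where the log-concave embedding degenerates and only a moment/entropy bound on $X$ survives — still contributes $O(1/n)$. By comparison, the single-vector embedding estimates and the tensorization should be routine once the right quantities are in place.
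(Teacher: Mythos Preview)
Your overall strategy---embed $X$ as the time-$1$ value of a martingale, tensorize to get $S_n(t)$ with quadratic-variation rate $A_t=\tfrac1n\sum_i(\Gamma_t^{(i)})^2$, and then exploit concentration of $A_t$ around its mean---matches the paper. The divergence is at what you correctly flag as the crux: how to pass from $L^2$-control of $A_t-\EE[\Gamma_t^2]$ to a bound on $\Ent(S_n\|G)$. You propose a de~Bruijn/Fisher-information route (extend the martingale past time $1$, write entropy as an integral of the Fisher defect, bound the score pathwise), but you leave this as an obstacle without a concrete lemma. The paper avoids Fisher information entirely. Its device (Lemma~\ref{lem:rel entropy method}, feeding into Theorem~\ref{thm: quant entropic clt}) is a direct Girsanov argument: given $X_1=\int_0^1\Gamma_s\,dB_s$ and $M_1=\int_0^1 F_s\,dB_s$, set $u_t=\int_0^t\frac{\Gamma_s-F_s}{1-s}\,dB_s$; by Fubini $\int_0^1 u_t\,dt=\int_0^1(\Gamma_t-F_t)\,dB_t$, so $M_1+\int_0^1 u_t\,dt=X_1$, and Girsanov yields $\Ent(X_1\|M_1)\le\frac12\int_0^1\EE\|F_t^{-1}u_t\|^2\,dt$, which by It\^o's isometry is a weighted $L^2$-norm of $\Gamma_t-F_t$. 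Applied with $\tilde\Gamma_t$ in place of $\Gamma_t$ and $F_t=\sqrt{\EE[\Gamma_t^2]}$ (so $M_1\sim\mathcal{N}(0,\mathrm{I_d})$), this is the first bound of Theorem~\ref{thm: quant entropic clt} and replaces your unproven Fisher-information step by a two-line Fubini trick.

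Two further points where the sketch is off relative to the paper. First, for the time-$1$ embedding with $C_t=\frac{1}{1-t}\mathrm{I_d}$ one has $\Gamma_t\preceq\frac1t\mathrm{I_d}$ (Lemma~\ref{gamma_properties}), so the operator norm blows up as $t\downarrow 0$, not $t\uparrow 1$; the $(1-t)^{-2}$ singularity in the entropy integrand near $t=1$ is cancelled not by a moment assumption on $X$ but by a factor $(1-t)$ in the trace estimate of Lemma~\ref{trace bounds}, and that estimate introduces a term $\EE\|v_t\|^2$ whose time integral equals $2\,\Ent(X\|\gamma)$ by the F\"ollmer identity~\eqref{variational entropy}---this, not an operator-norm bound, is where the factor $1+\Ent(X\|G)$ actually enters. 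Second, the paper's entropy bound carries factors of $\sigma_t^{-2}$ with $\sigma_t=\lambda_{\min}\EE[\Gamma_t]$, and a separate argument (Lemma~\ref{sigma bounds}, a Gr\"onwall-type estimate exploiting $\Gamma_t\preceq\frac1t\mathrm{I_d}$) is needed to show $\sigma_t\gtrsim(td^2)^{-1}$; this lower bound is essential to close the estimate and is absent from your plan.
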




Our method also yields a different (and typically stronger) bound if
the distribution is strongly log-concave.

\begin{theorem} \label{thm:ent-strong-log-concave}
  Let $X$ be a $d$-dimensional random vector with $\EE[X] = 0$ and
  $\Cov(X) = \Sigma$. Suppose further that $X$ is $1$-uniformly log concave (i.e. it has a probability density
  $e^{-\varphi(x)}$ satisfying $\nabla^2 \varphi \succeq \Id$) and that
  $\Sigma \succeq \sigma\mathrm{I}_d$ for some $\sigma > 0$.

  Let $G \sim \mathcal{N}(0, \Sigma)$ be a Gaussian with the same covariance as $X$ and let $\gamma \sim \mathcal{N}(0, \mathrm{I}_d)$ be a standard Gaussian. If
  $\{X^{(i)}\}_{i=1}^n$ are i.i.d copies of $X$ and $S_n =
  \frac{1}{\sqrt{n}}\sum\limits_{i=1}^n X^{(i)}$, then
  \[ \Ent(S_n || G) \leq \frac{2\left(d + 2\mathrm{Ent}\left(X||\gamma\right)\right)}{\sigma^4n}. \]
\end{theorem}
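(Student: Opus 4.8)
The plan is to realize $S_n$ as the terminal value of a martingale obtained by concatenating $n$ independent copies of the Skorokhod-type martingale embedding of $X$ from \cite{eldan2016skorokhod}, and then to bound $\Ent(S_n\|G)$ by a change-of-measure argument that quantifies how far the (random) quadratic variation of this martingale is from the deterministic matrix $\Sigma$. First, using the construction of \cite{eldan2016skorokhod}, I would put on a suitable filtered probability space a standard $d$-dimensional Brownian motion $(B_t)_{t\in[0,1]}$ and an adapted process $(\Gamma_t)$ of symmetric positive semidefinite matrices such that $X_t := \int_0^t \Gamma_s\,dB_s$ is a martingale with $X_1 \disteq X$ and $\EE\int_0^1 \Gamma_s^2\,ds = \Sigma$. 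The matrices $\Gamma_t^2$ appearing here are covariances of measures derived from $e^{-\varphi}$ by operations (restriction to convex sets, tilting by Gaussians) that preserve $1$-uniform log-concavity, so by the Brascamp--Lieb inequality $0 \preceq \Gamma_t^2 \preceq \Id$ for all $t$ almost surely (in particular $\Sigma \preceq \Id$). The same input also controls the ``energy'' of the embedding, a quantity of the form $\EE\int_0^1 \norm{\Gamma_s - \Id}_{HS}^2\,ds$, by $O(\Ent(X\|\gamma))$.

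Next I would tensorize. Taking $n$ independent copies of the embedding, splicing them along consecutive unit time intervals, and rescaling time by a factor $n$, one obtains a standard $d$-dimensional Brownian motion $(W_t)_{t\in[0,1]}$ and an adapted process $(A_t)$ of symmetric positive semidefinite matrices --- equal, on the $i$-th block $s\in(\tfrac{i-1}{n},\tfrac{i}{n}]$, to a time-rescaled independent copy of $\Gamma$ --- such that $\int_0^1 A_s\,dW_s \disteq S_n$. Writing $Q_t := \int_0^t A_s^2\,ds$ for the quadratic variation, the block structure gives $Q_1 = \tfrac1n\sum_{i=1}^n V^{(i)}$ with $V^{(i)} := \int_0^1 (\Gamma^{(i)}_s)^2\,ds$ i.i.d., $\EE V^{(i)} = \Sigma$ and $0\preceq V^{(i)}\preceq \Id$. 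Consequently $\EE\norm{Q_1-\Sigma}_{HS}^2 = \tfrac1n\EE\norm{V-\Sigma}_{HS}^2 \le \tfrac1n\EE\,\mathrm{tr}(V^2) \le \tfrac1n\mathrm{tr}(\Sigma) \le \tfrac{d}{n}$, so the random quadratic variation of the spliced martingale is within $O(\sqrt{d/n})$ of $\Sigma$; this is the source of the $d/n$ in the bound.

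The crux is converting this quadratic-variation control into an entropy bound. One cannot compare $Y_t := \int_0^t A_s\,dW_s$ with the Gaussian martingale $\Sigma^{1/2}W_t$ along the whole path --- their quadratic variations differ, so their path laws are mutually singular --- so instead I would work on the same space and add an adapted drift, setting $\widetilde Y_t = \int_0^t A_s\,dW_s + \int_0^t b_s\,ds$ with $b_s$ designed so that $\widetilde Y_1 \sim \mathcal N(0,\Sigma)$: the drift performs a Gaussian-bridge-type correction driving the running covariance deficit $\Sigma - Q_s$ to zero over the remaining time $1-s$, and additionally Gaussianizes each individual block. Girsanov's theorem then gives $\Ent(S_n\|G) \le \tfrac12\EE_{\mathbb{Q}}\int_0^1 \norm{b_s}^2\,ds$, where $\mathbb{Q}$ is the corresponding tilt, and it remains to bound the drift energy. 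It splits into two parts: the covariance-correction part, whose energy is of order $\sigma^{-4}\,\EE\norm{Q_1-\Sigma}_{HS}^2$ --- the two powers of $\sigma^{-1}$ coming from the two inversions of $\Sigma$ in the bridge correction, bounded since $\Sigma \succeq \sigma\Id$ --- and the per-block Gaussianization part, which by scaling contributes only $O(1/n^2)$ per block and hence $O(\Ent(X\|\gamma)/n)$ in total via the energy bound of the first paragraph. Substituting the estimate of the second paragraph yields the stated inequality.

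The main obstacle is the construction and estimation of the drift $b_s$. The quantity $Q_1-\Sigma$ enters $Y$ through its quadratic variation rather than as a drift, so $b_s$ must be built carefully so that (a) it produces \emph{exactly} the law $\mathcal N(0,\Sigma)$ at time $1$; (b) its energy genuinely scales like $\norm{Q_1-\Sigma}_{HS}^2$ --- which carries the crucial factor $1/n$ --- and not like $\int_0^1 \norm{A_s - \Sigma^{1/2}}_{HS}^2\,ds$, which does \emph{not} tend to zero; and (c) the change of measure to $\mathbb{Q}$ does not spoil the energy estimate. Controlling the near-$s=1$ singularity of the bridge correction while tracking the dependence on $\sigma$ is where the bulk of the work, and all the powers of $\sigma$, reside.
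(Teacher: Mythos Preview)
Your tensorization is \emph{sequential} (splicing the $n$ embeddings along consecutive subintervals), whereas the paper runs the $n$ copies \emph{in parallel} on the common interval $[0,1]$, driven by independent Brownian motions. The parallel sum satisfies $\tfrac{1}{\sqrt n}\sum_i\int_0^1\Gamma^{(i)}_s\,dB^{(i)}_s=\int_0^1\tilde\Gamma_s\,d\tilde B_s$ with $\tilde\Gamma_s=\bigl(\tfrac1n\sum_i(\Gamma^{(i)}_s)^2\bigr)^{1/2}$, so the diffusion matrix itself concentrates around $\sqrt{\EE[\Gamma_s^2]}$ \emph{for every fixed $s$}, with fluctuation of order $1/\sqrt n$. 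The paper then uses the explicit drift $u_t=\int_0^t\tfrac{\tilde\Gamma_s-\sqrt{\EE[\Gamma_s^2]}}{1-s}\,d\tilde B_s$ of Lemma~\ref{lem:rel entropy method}, whose energy is exactly this pointwise variance and hence carries the factor $1/n$; combined with $\Gamma_t\preceq\mathrm{I}_d$ and $\sigma_t\ge\sigma_0\ge\sigma$ (Lemma~\ref{lem:uniformly}), the first bound of Theorem~\ref{thm: quant entropic clt} integrates directly to the stated estimate.

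In your sequential splice, by contrast, $A_s$ on each block has the \emph{same} law as $\Gamma$, with no averaging; only the terminal $Q_1$ concentrates. Comparing with any deterministic $F_s$ via Lemma~\ref{lem:rel entropy method} therefore produces energy of order $\int_0^1\EE\norm{A_s-F_s}_{HS}^2\,ds=O(1)$, not $O(1/n)$. You correctly identify this as the crux, but you do not actually construct the alternative drift, and I do not see how to: an adapted drift corrects the mean of the process, not its quadratic variation, so there is no evident adapted $b_s$ whose energy is governed by $\norm{Q_1-\Sigma}_{HS}^2$ (a quantity not even $\mathcal F_s$-measurable for $s<1$) while forcing the terminal law to be exactly $\mathcal N(0,\Sigma)$. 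Two further problems: your ``$O(1/n^2)$ per block'' for the Gaussianization piece is unsupported --- relative entropy is affine-invariant, so turning $\tfrac{1}{\sqrt n}X^{(i)}$ into $\mathcal N(0,\tfrac1n\Sigma)$ on a block costs $\Ent(X\|G)$ in F\"ollmer drift energy, and summing over $n$ blocks gives $n\cdot\Ent(X\|G)$, not $\Ent(X\|\gamma)/n$. And your Girsanov step, with the $S_n$-martingale $\int_0^t A_s\,dW_s$ as the reference and a drift producing $G$, bounds $\Ent(G\|S_n)$, not $\Ent(S_n\|G)$; the paper takes the Gaussian martingale as the reference precisely to get the inequality in the right direction.
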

\begin{remark}
The theorem can be applied when $X$ is isotropic and  $\sigma$-uniformly log concave for some $\sigma > 0$. In this case, a change of variables shows that $\sqrt{\sigma}X$ is $1$-uniformly log concave and has $\sigma\mathrm{I}_d$ as a covariance matrix. Since relative entropy to a Gaussian is invariant under affine transformations, if $G \sim \mathcal{N}(0,\mathrm{I}_d)$ is a standard Gaussian, we get
$$\mathrm{Ent}\left(S_n||G\right) = \mathrm{Ent}\left(\sqrt{\sigma}S_n||\sqrt{\sigma}G\right) \leq \frac{2\left(d + 2\mathrm{Ent}\left( \sqrt{\sigma}X||G\right)\right)}{\sigma^4n}.$$
\end{remark}
\subsection{An informal description of the method} \label{sec: method description}

Let $B_t$ be a standard Brownian motion in $\RR^d$ with an associated filtration $\FF_t$. The following definition will be central to our method:

\begin{definition}
Let $X_t$ be a martingale satisfying $dX_t = \Gamma_tdB_t$ for some adapted process $\Gamma_t$ taking values in the positive semi-definite cone and let $\tau$ be a stopping time. We say that the triplet $(X_t, \Gamma_t, \tau)$ is a martingale embedding of the measure  $\mu$ if $X_\tau \sim \mu$.
\end{definition}

Note that if $\Gamma_t$ is deterministic, then $X_t$ has a Gaussian law for each $t$. At the heart of our proof is the following simple idea: Summing up $n$ independent copies of a martingale embedding of $\mu$, we end up with a martingale embedding of $\mu^{* n}$ whose associated covariance process has the form $\sqrt{\sum_{i = 1}^n
\left(\Gamma^{(i)}_t\right)^2}$. By the law of large numbers, this process is well concentrated and thus the resulting martingale is close to a Brownian motion.

This suggests that it would be useful to couple the sum process $\sum_{i = 1}^n X^{(i)}_t$ with the "averaged" process whose covariance is given by $\EE\left[ \sqrt{\sum_{i = 1}^n
    \left(\Gamma^{(i)}_t\right)^2} \right]$ (this process is a Brownian
motion up to deterministic time change). Controlling the error in the coupling naturally leads to a
bound on transportation distance. For relative entropy, we can
reformulate the discrepancies in the coupling in terms of a
predictable drift and deduce bounds by a judicious application of
Girsanov's theorem.

In order to derive quantitative bounds, one needs to construct a martingale embedding in a way that makes the fluctuations of the process $\Gamma_t$ tractable. The specific choices of $\Gamma_t$ that we consider are based on a construction introduced in \cite{eldan2016skorokhod}.
This construction is also related to the entropy minimizing process used by F\"ollmer (\cite{follmer1985entropy,follmer1986time}, see also Lehec \cite{lehec2013representation}) and to the stochastic localization which was used in \cite{eldan2013thin}. Such techniques have recently gained prominence and have been used, among other things, to improve known bounds of the KLS conjecture \cite{eldan2013thin,lee2016eldan}, calculate large deviations of non-linear functions \cite{eldan2016gaussian} and study tubular neighborhoods of complex varieties \cite{klartag2017eldan}.

The basic idea underlying the construction of the martingale is a certain measure-valued Markov process driven by a Brownian motion. This process interpolates between a given measure and a delta measure via multiplication by infinitesimal linear functions. The Doob martingale associated to the delta measure (the conditional expectation of the measure, based on the past) will be a martingale embedding for the original measure. This construction is described in detail in Subsection \ref{construction} below.
\subsection{Related work}

Multidimensional central limit theorems have been studied extensively
since at least the 1940's \cite{bergstrom1945central} (see also \cite{bhattacharya1977refinements} and
references therein). In particular, the dependence of the convergence
rate on the dimension was studied by Nagaev \cite{nagaev1976estimate}, Senatov
\cite{senatov1981uniform}, G\"otze \cite{gotze1991rate}, Bentkus \cite{bentkus2005lyapunov}, and Chen and Fang
\cite{chen2011multivariate}, among others. These works focused on convergence in
probabilities of convex sets. We mention that in dimension 1, the picture is much clearer and that tight estimates are known under various metrics (\cite{berry1941accuracy,bobkov13approach,bobkov18bounds,esseen1942liapounoff,rio09upper,rio11asymtotic}).

More recently, dependence on dimension in the high-dimensional CLT has
also been studied for Wishart matrices (Bubeck and Ganguly
\cite{bubeck2016entropic}, Eldan and
Mikulincer \cite{eldan2016information}), maxima of sums of independent
random vectors (Chernozhukov, Chetverikov, and Kato
\cite{chernozhukov2013gaussian}), and transportation distance
(\cite{zhai2016multivariate}). As mentioned earlier, Theorem
\ref{thm:w2-bounded} is directly comparable to an earlier result of
the third named author \cite{zhai2016multivariate}, improving on it by
a factor of $\sqrt{\log n}$ (see also the earlier work
\cite{valiant2011estimating}). We refer to \cite{zhai2016multivariate}
for a discussion of how convergence in transportation distance may be
related to convergence in probabilities of convex sets.

As mentioned above, Theorem \ref{thm:w2-log-concave} is not new, and follows from a result of Courtade, Fathi and Pananjady \cite[Theorem 4.1]{courtade2017existence}. Their technique employs Stein's method (see also \cite{bonis2019stein}, for a different approach using Stein's method) in a novel way which is also applicable to entropic CLTs (see below). In a subsequent work \cite{fathi2018stein}, similar bounds are derived for convergence in the $p$'th-Wasserstein transportation metric.

Regarding entropic CLTs, it was shown by Barron
\cite{barron1986entropy} that convergence occurs as long as the
distribution of the summand has finite relative entropy (with respect
to the Gaussian). However, establishing explicit rates of convergence
does not seem to be a straightforward task. Even in the restricted
setting of log-concave distributions, not much is known.
One of the only quantitative results is Proposition 4.3 in \cite{courtade2017existence}, which gives near optimal convergence, provided that the distribution has finite Fisher information. We do not know of any results
prior to Theorem \ref{thm:ent-log-concave} which give entropy distance
bounds of the form $\frac{\mathrm{poly}(d)}{n}$ to a sum of general log-concave vectors.

A one-dimensional result was established by Artstein, Ball, Barthe,
and Naor \cite{artstein2004rate} and independently by Barron and Johnson
\cite{barron2004fisher}, who showed an optimal $O(1/n)$ convergence
rate in relative entropy for distributions having a spectral gap
(i.e. satisfying a Poincar\'e inequality). This was later improved by
Bobkov, Chistyakov, and G\"otze \cite{bobkov2013rate,
  bobkov2014berry}, who derive an Edgeworth-type expansion for the
entropy distance which also applies to higher dimensions. However,
although their estimates contain very precise information as $n
\rightarrow \infty$, the given error term is only asymptotic in $n$ and no explicit
dependence on the measure or on the dimension is given (in fact, the dependence
derived from the method seems to be exponential in the dimension
$d$).

A related ``entropy jump'' bound was proved by Ball and Nguyen
\cite{ball2012entropy} for log-concave random vectors in arbitrary
dimensions (see also \cite{ball2003entropy}). Essentially, the bound
states that for two i.i.d. random vectors $X$ and $Y$, the relative
entropy $\Ent\left(  \frac{X + Y}{\sqrt{2}} \middle|\middle|G \right)$
is strictly less than $\Ent(X || G)$, where the amount is quantified
by the spectral gap for the distribution of
$X$. Repeated application gives a bound for entropy of sums of
i.i.d. log-concave vectors in any dimension, but the bound is far from
optimal. It is not apparent to us whether the method of
\cite{ball2012entropy} can be extended to provide quantitative
estimates for convergence in the entropic CLT.

\subsection{Notation}
We work in $\RR^d$ equipped with the Euclidean norm, which we denote by $\norm{\cdot}$. For a positive semi-definite symmetric matrix $A$ we denote by $\sqrt{A}$ the unique positive semi-definite matrix $B$, for which the relation $B^2 = A$ holds. For symmetric matrices $A$ and $B$ we use $A \preceq B$ to signify that $B - A$ is a positive semi-definite matrix. By $A^\dagger$ we denote the pseudo inverse of $A$. Put succinctly, this means that in $A^\dagger$ every non-zero eigenvalue of $A$ is inverted. For a random matrix $A$, we will write $\EE\left[A\right]^\dagger$, for the pseudo inverse of its expectation.\\
If $B_t$ is the standard Brownian motion in $\RR^d$ then for any adapted process $F_t$ we denote by
$\int\limits_0^tF_sdB_s,$
the It\^o stochastic integral. We refer by It\^o's isometry to the fact
$$\EE\left[\norm{\int\limits_0^tF_sdB_s}^2\right] = \int\limits_0^t\EE\left[\norm{F_s}_{HS}^2\right]ds$$
when $F_t$ is adapted to the natural filtration of $B_t$.\\
$\mu$ will always stand for a probability measure. To avoid confusion, when integrating with respect to $\mu$, on $\mathbb{R}^d$, we will use the notation $\int \dots \mu(dx)$. For a measure-valued stochastic process $\mu_t$, the expression $d\mu_t$ refers to the stochastic derivative of the process.
A measure $\mu$ on $\RR^d$ is said to be log-concave if it is supported on some subspace of $\RR^d$ and, relative to the Lebesgue measure of that subspace, it has a density $\rho$, twice differentiable almost everywhere, for which
$$-\nabla^2\log(\rho(x)) \succeq 0 \ \ \ \ \text{for all } x,$$
where $\nabla^2$ denotes the Hessian matrix, in the Alexandrov sense. If in addition there exists an $\sigma > 0$ such that
$$-\nabla^2\log(\rho(x)) \succeq \sigma\mathrm{I}_d \ \ \ \ \text{for all } x,$$
we say that $\mu$ is $\sigma$-uniformly log-concave. The measure $\mu$ is called \emph{isotropic} if it is centered and its covariance matrix is the identity, i.e.,
 $$\int\limits_{\RR^d}x \mu(dx) = 0 \text{ and } \int\limits_{\RR^d}x\otimes x \mu(dx) = \mathrm{I}_d.$$
Finally, as a convention, we use the letters $C,C',c,c'$ to represent positive universal constants whose values may change between different appearances.

\paragraph{\bf Acknowledgments}  We are extremely grateful to the anonymous referee for his/her careful reading of this manuscript. His/her efforts have greatly improved the presentation and overall readability.
\section{Obtaining convergence rates from martingale embeddings}
Suppose that we are given a measure $\mu$ and a corresponding martingale embedding $(X_t, \Gamma_t, \tau)$. The goal of this section is to express bounds for the corresponding CLT convergence rates (of the sum of independent copies of $\mu$-distributed random vectors) in terms of the behavior of the process $\Gamma_t$ and $\tau$.

Throughout this section we fix a measure $\mu$ on $\RR^d$ whose expectation is $0$, a random vector $X \sim \mu$, and a corresponding Gaussian $G \sim \mathcal{N}\left(0, \Sigma\right)$, where $\mathrm{Cov}\left(X\right) = \Sigma$. Also, the sequence $\{X^{(i)}\}_{i=1}^\infty$ will denote independent copies of $X$, and we write $S_n := \frac{1}{\sqrt{n}}\sum\limits_{i=1}^nX^{(i)}$ for their normalized sum. Finally, we use $B_t$ to denote a standard Brownian motion on $\RR^d$ adapted to a filtration $\FF_t$.

\subsection{A bound for Wasserstein-2 distance}
The following is our main bound for convergence in Wasserstein distance.
\begin{theorem} \label{main}
	Let $S_n$ and $G$ be defined as above and let $(X_t, \Gamma_t, \tau)$ be a martingale embedding of $\mu$. Set $\Gamma_t = 0$ for $t > \tau$, then
	$$\mathcal{W}_2^2\left(S_n, G\right) \leq \int\limits_{0}^\infty \min\left(\frac{1}{n}\mathrm{Tr}\left(\EE\left[\Gamma_t^4\right]\EE[\Gamma_t^2]^{\dagger}\right), 4\mathrm{Tr}\left(\EE\left[\Gamma_t^2\right]\right)\right)dt.$$
\end{theorem}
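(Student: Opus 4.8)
The strategy is to build an explicit coupling of $S_n$ and $G$ by summing the martingale embeddings and comparing the resulting martingale to a time-changed Brownian motion. Let $(X^{(i)}_t, \Gamma^{(i)}_t, \tau^{(i)})$ be independent copies of the given martingale embedding, each driven by its own Brownian motion $B^{(i)}_t$, with $\Gamma^{(i)}_t$ set to $0$ for $t > \tau^{(i)}$. Define the sum martingale $Y_t := \frac{1}{\sqrt n}\sum_{i=1}^n X^{(i)}_t$. Then $dY_t = \frac{1}{\sqrt n}\sum_i \Gamma^{(i)}_t\, dB^{(i)}_t$, which is a martingale with $Y_\infty = \frac{1}{\sqrt n}\sum_i X^{(i)}_{\tau^{(i)}} \sim S_n$ (using that $X^{(i)}_t$ is constant for $t > \tau^{(i)}$, and noting $\EE X = 0$ so the embedding is a genuine martingale). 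The quadratic variation of $Y_t$ is $\langle Y\rangle_t = \int_0^t A_s\, ds$ with $A_s := \frac1n\sum_{i=1}^n (\Gamma^{(i)}_s)^2$. Since $\EE[\Gamma^2_t \mathbbm{1}_{t\le\tau}]$ integrates (over $t$) to $\Sigma = \Cov(X)$ — this follows from It\^o's isometry applied to the single embedding, $\EE[X_\tau \otimes X_\tau] = \int_0^\infty \EE[\Gamma_t^2]\,dt$ — the "averaged" process with covariance rate $\bar A_t := \EE[\Gamma_t^2]$ is, after the deterministic time change $t\mapsto \int_0^t \bar A_s\, ds$, exactly a Brownian motion whose value at time $\infty$ is distributed as $G \sim \mathcal N(0,\Sigma)$.

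The coupling is then: run $Y_t$ and, on the same probability space, define $\tilde G_t := \int_0^t \sqrt{\bar A_s}\, (\bar A_s)^\dagger \sqrt{A_s}\, dB_s$ where $B$ is a single $d$-dimensional Brownian motion obtained by "splicing" the $B^{(i)}$ appropriately — more cleanly, set $dG_t = \sqrt{\bar A_t}\, (\sqrt{A_t})^\dagger\, dY_t$, so that $G_t$ has deterministic quadratic variation rate $\bar A_t$ (on the range of $A_t$; one must check the pseudo-inverse bookkeeping is harmless since $\bar A_t \succeq 0$ has the right range) and hence $G_\infty \sim G$. The difference $D_t := Y_t - G_t$ satisfies $dD_t = (\mathrm{I} - \sqrt{\bar A_t}(\sqrt{A_t})^\dagger)\, dY_t$, a martingale, so by It\^o's isometry
\[
\EE\norm{D_\infty}^2 = \int_0^\infty \EE\Big[\big\lVert(\mathrm{I} - \sqrt{\bar A_t}\,(\sqrt{A_t})^\dagger)\sqrt{A_t}\big\rVert_{HS}^2\Big]\, dt = \int_0^\infty \EE\big[\mathrm{Tr}((\sqrt{A_t}-\sqrt{\bar A_t})^2)\big]\, dt.
\]
Bounding $\mathcal W_2^2(S_n,G) \le \EE\norm{D_\infty}^2$ then reduces everything to estimating $\EE\,\mathrm{Tr}((\sqrt{A_t}-\sqrt{\bar A_t})^2)$ pointwise in $t$.

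For the first term in the $\min$: since $A_t$ is an average of $n$ i.i.d. copies of $(\Gamma_t)^2$ with mean $\bar A_t$, I expect a bound $\EE\,\mathrm{Tr}((\sqrt{A_t}-\sqrt{\bar A_t})^2) \le \frac1n \mathrm{Tr}(\EE[\Gamma_t^4]\, \bar A_t^\dagger)$; this is the matrix analogue of the scalar inequality $\EE(\sqrt{a}-\sqrt{\bar a})^2 \le \mathrm{Var}(a)/\bar a$ combined with $\mathrm{Var}$ of an average being $\frac1n$ times the single-copy variance, and with $\EE[\Gamma_t^4] \succeq \mathrm{Var}$-type quantity so that $\mathrm{Tr}(\EE[\Gamma_t^4]\bar A_t^\dagger)$ dominates. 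For the second term, crudely $\EE\,\mathrm{Tr}((\sqrt{A_t}-\sqrt{\bar A_t})^2) \le 2\EE\,\mathrm{Tr}(A_t) + 2\mathrm{Tr}(\bar A_t) = 4\,\mathrm{Tr}(\EE[\Gamma_t^2])$, using $\EE A_t = \bar A_t$ and $(\sqrt x - \sqrt y)^2 \le 2x+2y$. Taking the minimum inside the integral and summing over $t$ gives the claim. The main obstacle I anticipate is the matrix inequality for the first term: the scalar argument uses monotonicity and concavity of $\sqrt\cdot$, but for non-commuting matrices $\sqrt{A_t}$ and $\sqrt{\bar A_t}$ one must argue more carefully — likely via the integral representation $\sqrt{M} = \frac1\pi\int_0^\infty \sqrt\lambda\, M(M+\lambda)^{-1}\,\frac{d\lambda}{\sqrt\lambda}$ or via the operator-convexity of $x \mapsto x^2$ (equivalently operator-concavity of $\sqrt{}$), and one must be careful on the kernel of $\bar A_t$ (where $A_t$ also vanishes a.s., since $\bar A_t = \EE A_t$ and $A_t \succeq 0$), which is where the pseudo-inverse enters.
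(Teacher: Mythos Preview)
Your proposal is correct and follows essentially the same route as the paper: sum the embeddings, splice the driving Brownian motions into a single one, couple $S_n$ to the deterministic-covariance martingale $\int_0^\infty \sqrt{\bar A_s}\,d\tilde B_s$, apply It\^o's isometry to arrive at $\mathcal W_2^2(S_n,G)\le \int_0^\infty \EE\,\mathrm{Tr}\big((\sqrt{A_t}-\sqrt{\bar A_t})^2\big)\,dt$, and then bound the integrand pointwise by the two terms of the $\min$. The matrix inequality you single out as the main obstacle is exactly the paper's Lemma~\ref{PositiveDefinite}, proved there by the algebraic factorization $\sqrt A-\sqrt B=(A-B+[\sqrt A,\sqrt B])(\sqrt A+\sqrt B)^\dagger$ together with $(\sqrt A+\sqrt B)^{\dagger 2}\preceq A^\dagger$, rather than via an integral representation; your kernel observation $\ker\bar A_t\subset\ker A_t$ is precisely the hypothesis that lemma needs (and is also what lets one augment $\tilde B_t$ on $\ker A_t$ to make the Gaussian coupling exact, the ``pseudo-inverse bookkeeping'' you flag).
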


To illustrate how such a result might be used, let us assume, for simplicity, that $\Gamma_t \prec k\mathrm{I}_d$ almost-surely for some $k > 0 $ and that $\tau$ has a sub-exponential tail, i.e., there exist positive constants $C, c > 0$ such that for any $t > 0$,
\begin{equation} \label{sub-exponential}
\PP(\tau > t) \leq C e^{-ct}.
\end{equation}
Under these assumptions,
\begin{align*}
\mathcal{W}_2^2\left(S_n, G\right) &\leq \int\limits_{0}^\infty \min\left(\frac{1}{n}\mathrm{Tr}\left(\EE\left[\Gamma_t^4\right]\EE[\Gamma_t^2]^{\dagger}\right), 4k^2d\PP\left(\tau  > t\right)\right)dt \\
&\leq dk^2\int\limits_0^{\frac{\log(n)}{c}}\frac{1}{n}dt + 4Cdk^2\int\limits_{\frac{\log(n)}{c}}^\infty e^{-ct}dt = \frac{d\log(n)k^2}{cn} + \frac{4C dk^2}{n}.
\end{align*}
Towards the proof, we will need the following technical lemma.
\begin{lemma} \label{PositiveDefinite}
	Let $A,B$ be positive semi-definite matrices with $\ker(A) \subset \ker(B)$. Then,
	$$\mathrm{Tr}\left(\left(\sqrt{A} - \sqrt{B}\right)^2\right) \leq \mathrm{Tr}\left(\left(A-B\right)^2A^{\dagger}\right).$$
\end{lemma}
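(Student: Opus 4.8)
The plan is to reduce to the case where $A$ is invertible, and then to exploit a convenient factorization of $A-B$ that rewrites the difference of the two sides as a sum of manifestly nonnegative terms.

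First I would pass to the subspace $V := \ker(A)^\perp = \mathrm{range}(A)$. The hypothesis $\ker(A)\subseteq\ker(B)$ forces $\mathrm{range}(B)\subseteq V$, and since $B$ is symmetric it maps $V$ into itself and annihilates $V^\perp$; the same is true of $\sqrt A$, $\sqrt B$ and $A^\dagger$, because $\ker\sqrt A=\ker A$ and $\ker\sqrt B=\ker B\supseteq\ker A$. Hence both traces appearing in the statement are unchanged if every matrix is replaced by its restriction to $V$, and on $V$ the matrix $A$ is positive definite with $A^\dagger=A^{-1}$. It therefore suffices to prove the inequality under the extra assumption $A\succ 0$, with $A^\dagger$ replaced by $A^{-1}$.

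Now set $\Delta:=\sqrt A-\sqrt B$. The identity
$$A-B=\sqrt A\,\Delta+\Delta\,\sqrt B$$
follows at once by expanding the right-hand side. Multiplying on the left by $A^{-1/2}$ and using $A^{-1/2}\sqrt A=\mathrm{I_d}$ gives $A^{-1/2}(A-B)=\Delta+A^{-1/2}\Delta\sqrt B$. Since $\mathrm{Tr}\big((A-B)^2A^{-1}\big)=\|A^{-1/2}(A-B)\|_{HS}^2$ (using $A^{-1}=A^{-1/2}A^{-1/2}$, cyclicity of the trace, and symmetry of $A-B$), expanding the squared Hilbert--Schmidt norm of $\Delta+A^{-1/2}\Delta\sqrt B$ yields
$$\mathrm{Tr}\big((A-B)^2A^{-1}\big)=\mathrm{Tr}(\Delta^2)+2\,\mathrm{Tr}\big(\Delta A^{-1/2}\Delta\sqrt B\big)+\big\|A^{-1/2}\Delta\sqrt B\big\|_{HS}^2.$$
The first term on the right is exactly $\mathrm{Tr}\big((\sqrt A-\sqrt B)^2\big)$, the left-hand side of the claimed inequality, so it remains to verify that the last two terms are nonnegative.

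The third term is a squared Hilbert--Schmidt norm. For the middle term, observe that $\Delta A^{-1/2}\Delta$ is positive semidefinite — for any $v$ one has $v^\top\Delta A^{-1/2}\Delta v=(\Delta v)^\top A^{-1/2}(\Delta v)\ge 0$ — and $\sqrt B$ is positive semidefinite, while the trace of a product of two positive semidefinite matrices is nonnegative since $\mathrm{Tr}(XY)=\mathrm{Tr}(X^{1/2}YX^{1/2})\ge 0$. This proves the lemma. The only delicate point is the bookkeeping in the reduction step, i.e. checking that the pseudo-inverse and the square roots all respect the splitting $\RR^d=V\oplus V^\perp$; once $A$ is invertible, the factorization $A-B=\sqrt A\,\Delta+\Delta\,\sqrt B$ does all the work.
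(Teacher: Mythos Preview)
Your proof is correct, and it follows a different route from the paper's. The paper starts from the factorization
\[
\sqrt{A}-\sqrt{B}=\bigl(A-B+[\sqrt{A},\sqrt{B}]\bigr)(\sqrt{A}+\sqrt{B})^{\dagger},
\]
applies the Cauchy--Schwarz-type inequality $\mathrm{Tr}((XY)^2)\le\mathrm{Tr}(X^2Y^2)$, argues that the commutator $[\sqrt{A},\sqrt{B}]$ only contributes nonpositively (being anti-symmetric), and finishes with $\bigl((\sqrt{A}+\sqrt{B})^\dagger\bigr)^{2}\preceq A^{\dagger}$. Your argument instead reduces to invertible $A$, uses the asymmetric factorization $A-B=\sqrt{A}\,\Delta+\Delta\,\sqrt{B}$ with $\Delta=\sqrt{A}-\sqrt{B}$, and expands $\|A^{-1/2}(A-B)\|_{HS}^2$ directly; the inequality then drops out because the cross term $\mathrm{Tr}(\Delta A^{-1/2}\Delta\sqrt{B})$ is a trace of a product of two positive semi-definite matrices. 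Your approach is shorter and more elementary: it avoids the commutator bookkeeping and the trace Cauchy--Schwarz step entirely. The paper's approach, on the other hand, never needs to reduce to the invertible case and keeps the roles of $A$ and $B$ symmetric until the very last step, which makes it slightly more transparent that the bound $A^\dagger$ could be replaced by $\bigl((\sqrt{A}+\sqrt{B})^\dagger\bigr)^{2}$.
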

\begin{proof}
	Since $A$ and $B$ are positive semi-definite, $\ker\left(\sqrt{A} + \sqrt{B}\right) \subset\ker\left(\sqrt{A} - \sqrt{B}\right)$.Thus, we have that
	\begin{align}
	\sqrt{A}-\sqrt{B} &= \left(\sqrt{A}-\sqrt{B}\right)\left(\sqrt{A}+\sqrt{B}\right)\left(\sqrt{A}+\sqrt{B}\right)^{\dagger}\\
	&= \left(A  - B +\left[\sqrt{A},\sqrt{B}\right]\right)\left(\sqrt{A}+\sqrt{B}\right)^{\dagger}.\nonumber
	\end{align}
	So, $$\mathrm{Tr}\left(\left(\sqrt{A}-\sqrt{B}\right)^2\right) = \mathrm{Tr}\left(\left(\left(A  - B +\left[\sqrt{A},\sqrt{B}\right]\right)\left(\sqrt{A}+\sqrt{B}\right)^{\dagger}\right)^2\right).$$
	Note that for any symmetric matrices $X$ and $Y$, by the Cauchy-Schwartz inequality,
	\[ \mathrm{Tr}\left((XY)^2\right) \leq \mathrm{Tr}\left(XYXY\right)\leq \sqrt{\mathrm{Tr}\left(XYYX\right)\cdot\mathrm{Tr}\left(YXXY\right)}= \mathrm{Tr}\left(X^2Y^2\right). \]
	Applying this to the above equation shows
	\begin{align*}
	\mathrm{Tr}\left(\left(\sqrt{A}-\sqrt{B}\right)^2\right) \leq \mathrm{Tr}\left(\left(A - B + \left[\sqrt{A},\sqrt{B}\right]\right)^2\left(\left(\sqrt{A}+\sqrt{B}\right)^\dagger\right)^{2}\right).
	\end{align*}

	Note that the commutator $\left[\sqrt{A},\sqrt{B}\right]$ is an anti-symmetric matrix, so that $(A-B)\left[\sqrt{A},\sqrt{B}\right] + \left[\sqrt{A},\sqrt{B}\right](A-B)$ is anti-symmetric as well. Thus, for any symmetric matrix $C$, we have that
	$$\mathrm{Tr}\left(\left((A-B)\left[\sqrt{A},\sqrt{B}\right] + \left[\sqrt{A},\sqrt{B}\right](A-B)\right)C\right) = 0.$$ Also, since all eigenvalues of anti-symmetric matrices are purely imaginary, the square of such matrices must be negative definite. And again, for any symmetric positive definite matrix $C$, it holds that $C^{1/2}\left[\sqrt{A},\sqrt{B}\right]^2C^{1/2}$ is negative definite and $\mathrm{Tr}\left(\left[\sqrt{A},\sqrt{B}\right]^2C\right) \leq 0$. Using these observations we obtain
	$$\mathrm{Tr}\left(\left(A - B + \left[\sqrt{A},\sqrt{B}\right]\right)^2\left(\left(\sqrt{A}+\sqrt{B}\right)^\dagger\right)^{2}\right) \leq \mathrm{Tr}\left(\left(A-B\right)^2\left(\left(\sqrt{A}+\sqrt{B}\right)^\dagger\right)^{2}\right).$$
	Finally, if $C,X,Y$ are positive definite matrices with $X \preceq Y$ then $C^{1/2}(Y-X)C^{1/2}$ is positive definite which shows $\mathrm{Tr}\left(CX\right) \leq \mathrm{Tr}\left(CY\right)$. The assumption $\ker(A) \subset \ker(B)$ implies $\left(\left(\sqrt{A}+\sqrt{B}\right)^{\dagger}\right)^2\preceq A^{\dagger}$, which concludes the claim by
	$$\mathrm{Tr}\left(\left(A-B\right)^2\left(\left(\sqrt{A}+\sqrt{B}\right)^{\dagger}\right)^{2}\right) \leq \mathrm{Tr}\left(\left(A-B\right)^2A^{\dagger}\right) $$
\end{proof}
\begin{proof}[Proof of Theorem \ref{main}]
	Recall that $(X_t, \Gamma_t, \tau)$ is a martingale embedding of $\mu$. Let $\left(X_t^{(i)}, \Gamma_t^{(i)}, \tau^{(i)}\right)$ be independent copies of the embedding. We can always set $\Gamma_t^{(i)} = 0$ whenever $t > \tau^{(i)} $, so that $\int\limits_{0}^\infty \Gamma^{(i)} _tdB^{(i)} _t \sim \mu$. Define $\tilde{\Gamma}_t = \sqrt{\frac{1}{n}\sum\limits_{i=1}^n\left(\Gamma_t^{(i)} \right)^2}$. Our first goal is to show
	\begin{equation} \label{wassersein-bound}
	\mathcal{W}_2^2(G,S_n) \leq  \int\limits_0^\infty\EE\left[\mathrm{Tr}\left(\left(\tilde{\Gamma_t} - \sqrt{\EE\left[\Gamma_t^2\right]}\right)^2\right)\right]dt.
	\end{equation}
	The theorem will then follow by deriving suitable bounds for $\EE\left[\mathrm{Tr}\left(\left(\tilde{\Gamma_t} - \sqrt{\EE\left[\Gamma_t^2\right]}\right)^2\right)\right]$ using Lemma \ref{PositiveDefinite}.
	Consider the sum
	$\frac{1}{\sqrt{n}}\sum\limits_{i=1}^n \int\limits_{0}^\infty \Gamma^{(i)}_tdB^{(i)}_t$, which has the same law as $S_n$. It may be rewritten as
	$$S_n = \int\limits_{0}^\infty \tilde{\Gamma}_td\tilde{B}_t,$$
	where $d \tilde{B}_t :=  \frac{1}{\sqrt{n}} \tilde{\Gamma}_t^\dagger \sum_i \Gamma_t^{(i)} d B_t^{(i)}$
	is a martingale whose quadratic variation matrix has derivative satisfying
	\begin{equation}\label{eq:qvid}
	\frac{d}{dt} [\tilde B]_t = \frac{1}{n }\sum_i \tilde{\Gamma}_t^\dagger \left ( \Gamma_t^{(i)} \right )^2 \tilde{\Gamma}_t^\dagger \preceq \mathrm{I}_d.
	\end{equation}
	(in fact, as long as $\RR^d$ is spanned by the images of $\Gamma_t^{(i)}$, this process is a Brownian motion). We may now decompose $S_n$ as
	\begin{equation} \label{decompo}
	S_n = \int\limits_{0}^\infty\sqrt{\EE\left[\tilde{\Gamma}_t^2\right]}d\tilde{B}_t + \int\limits_{0}^\infty\left(\tilde{\Gamma}_t - \sqrt{\EE\left[\tilde{\Gamma}_t^2\right]}\right)d\tilde{B}_t.
	\end{equation}
	Observe that $G := \int\limits_{0}^\infty \sqrt{\EE[\tilde{\Gamma}_t^2]}d\tilde{B}_t$ has a Gaussian law and that $\EE[\tilde{\Gamma}_t^2] = \EE[\Gamma_t^2]$. By applying It\^o's isometry, we may see that $G$ has the ``correct'' covariance in the sense that
	$$\mathrm{Cov}(G) = \EE\left[\left(\int\limits_{0}^\infty \sqrt{\EE[\tilde{\Gamma}_t^2]}d\tilde{B}_t\right)^{\otimes 2}\right] = \EE\left[\int\limits_{0}^\infty \Gamma^2_tdt\right] = \EE\left[\left(\int\limits_{0}^\infty \Gamma_tdB_t\right)^{\otimes 2}\right] = \mathrm{Cov}(X).$$
	The decomposition \eqref{decompo} induces a natural coupling between $G$ and $S_n$, which shows, by another application of It\^o's isometry, that
	\begin{align*}
	\mathcal{W}_2^2(G,S_n) &\leq \EE\left[\norm{\int\limits_0^\infty \left( \tilde{\Gamma}_t - \sqrt{\EE[\Gamma_t^2]} \right) d\tilde{B}_t}^{2}\right] \stackrel{ \eqref{eq:qvid}}{ \leq}  \mathrm{Tr}\left(\EE\left[\int\limits_{0}^\infty\left(\tilde{\Gamma}_t - \sqrt{\EE[\Gamma_t^2]}\right)^2dt\right]\right)\\
	& = \int\limits_0^\infty\EE\left[\mathrm{Tr}\left(\left(\tilde{\Gamma_t} - \sqrt{\EE\left[\Gamma_t^2\right]}\right)^2\right)\right]dt ,\nonumber
	\end{align*}
	where the last equality is due to Fubini's theorem. Thus, \eqref{wassersein-bound} is established. Since $\left(\tilde{\Gamma_t} - \sqrt{\EE\left[\Gamma_t^2\right]}\right)^2 \preceq 2\left(\tilde{\Gamma}_t^2 + \EE[\Gamma^2_t]\right)$, we have
	\begin{equation} \label{first half}
	\mathrm{Tr}\left(\EE\left[\left(\tilde{\Gamma_t} - \sqrt{\EE\left[\Gamma_t^2\right]}\right)^2\right]\right)\leq 4\mathrm{Tr}\left(\EE[\Gamma_t^2]\right).
	\end{equation}
	To finish the proof, write  $U_t := \frac{1}{n}\sum\limits_{i = 1}^n\left(\Gamma_t^{(i)}\right)^2$, so that $\tilde{\Gamma}_t = \sqrt{U_t}$. Since $\Gamma_t$ is positive semi-definite, it is clear that $ \ker\left(\EE\left[\Gamma_t^2\right]\right) \subset \ker(U_t)$. By Lemma \ref{PositiveDefinite},
	\begin{align*}
	\EE\left[\mathrm{Tr}\left(\left(\sqrt{U_t} - \sqrt{\EE\left[\Gamma_t^2\right]}\right)^2\right)\right] \leq& \mathrm{Tr}\left(\EE\left[\left(U_t - \EE\left[\Gamma_t^2\right]\right)^2\right]\EE\left[\Gamma_t^2\right]^{\dagger}\right) \\
	=& \frac{1}{n^2}\mathrm{Tr}\left(\sum\limits_{i=1}^n\EE\left[\left(\left(\Gamma_t^{(i)}\right)^2 - \EE\left[\Gamma_t^2\right]\right)^2\right]\EE\left[\Gamma_t^2\right]^{\dagger}\right)\\
	=& \frac{1}{n}\mathrm{Tr}\left(\left(\EE\left[\Gamma_t^4\right] - \EE\left[\Gamma_t^2\right]^2\right)\EE\left[\Gamma_t^2\right]^{\dagger}\right)\\
	 \leq& \frac{1}{n}\mathrm{Tr}\left(\EE\left[\Gamma_t^4\right]\EE\left[\Gamma_t^2\right]^{\dagger}\right),
	\end{align*}
	where we have used the fact $\EE\left[\left(\Gamma_t^{(i)}\right)^2\right] = \EE\left[\Gamma_t^2\right]$ in the second equality.
	Combining  the last inequality with \eqref{first half} and \eqref{wassersein-bound} produces the required result.

\end{proof}
\subsection{A bound for the relative entropy}
As alluded to in the introduction, in order to establish bounds on the relative entropy we will use the existence of a martingale embedding to construct an It\^o process whose martingale part has a deterministic quadratic variation. This will allow us to relate the relative entropy to a Gaussian with the norm of the drift term through the use of Girsanov's theorem.  As a technicality, we require the stopping time associated to the martingale embedding to be constant. Our main bound for the relative entropy reads,
\begin{theorem} \label {thm: quant entropic clt}
Let $(X_t,\Gamma_t,1)$ be a martingale embedding of $\mu$. Assume that for every $0 \leq t \leq 1$, $\EE\left[\Gamma_t\right] \succeq \sigma_t\mathrm{I}_d \succneqq 0$ and that $\Gamma_t$ is invertible a.s. for $t < 1$. Then we have the following inequalities:
$$\mathrm{Ent}(S_n|| G) \leq \frac{1}{n}\int\limits_0^1\frac{\EE\left[\mathrm{Tr}\left(\left(\Gamma_t^2 - \EE\left[\Gamma_t^2\right]\right)^2\right)\right]}{(1-t)^2\sigma_t^2}\left(\int\limits_t^1\sigma_s^{-2}ds\right)dt,$$
and
$$\mathrm{Ent}(S_n|| G) \leq \int\limits_0^1\frac{\mathrm{Tr}\left(\EE\left[\Gamma_t^2 \right] - \EE\left[\tilde{\Gamma_t}\right]^2\right)}{(1-t)^2}\left(\int\limits_t^1\sigma_s^{-2}ds\right)dt,$$
where
$$\tilde{\Gamma}_t = \sqrt{\frac{1}{n}\sum\limits_{i=1}^n \left(\Gamma_t^{(i)}\right)^2}$$
 and $\Gamma_t^{(i)}$ are independent copies of $\Gamma_t$.
\end{theorem}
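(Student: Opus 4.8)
The plan is to imitate the proof of Theorem~\ref{main}, but rather than merely \emph{couple} $S_n$ to a Gaussian we will realise $S_n$ as the terminal value of an It\^o process whose \emph{diffusion} coefficient is deterministic, so that Girsanov's theorem turns the relative entropy into the $L^2$-norm of a predictable drift (this is the same mechanism underlying F\"ollmer's drift / entropic interpolation). Concretely, let $(X_t^{(i)},\Gamma_t^{(i)},1)$ be independent copies of the embedding, put $\tilde\Gamma_t=\sqrt{\tfrac1n\sum_{i}(\Gamma_t^{(i)})^2}$ and, exactly as in the proof of Theorem~\ref{main}, write $S_n=\int_0^1\tilde\Gamma_t\,d\tilde B_t$ with $d\tilde B_t=\tfrac1{\sqrt n}\tilde\Gamma_t^{\dagger}\sum_i\Gamma_t^{(i)}dB_t^{(i)}$. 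The hypothesis that $\Gamma_t$ is invertible a.s.\ for $t<1$ makes $\tilde\Gamma_t$ invertible, so $\tfrac{d}{dt}[\tilde B]_t=\mathrm{I_d}$ and $\tilde B$ is a genuine Brownian motion (with respect to the joint filtration $\FF_t$ of the $B^{(i)}$). Set $\phi_t:=\sqrt{\EE[\Gamma_t^2]}$; this is deterministic, positive definite, $\int_0^1\phi_t^2\,dt=\Sigma$, and since $\EE[\Gamma_t^2]\succeq(\EE\Gamma_t)^2\succeq\sigma_t^2\mathrm{I_d}$ (operator convexity of $x\mapsto x^2$) we have $\norm{\phi_t^{-1}}_{op}\le\sigma_t^{-1}$ and $\EE[\Gamma_t^2]^{\dagger}\preceq\sigma_t^{-2}\mathrm{I_d}$.

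\textbf{Re-routing the discrepancy into a drift.} Let $M_t:=\int_0^t\tilde\Gamma_s\,d\tilde B_s=\EE[S_n\mid\FF_t]$ and define $Y_t$ by $Y_0=0$ and $dY_t=\phi_t\,d\tilde B_t+\tfrac{M_t-Y_t}{1-t}\,dt$. With $h_t:=M_t-Y_t$ one checks $d\big(\tfrac{h_t}{1-t}\big)=\tfrac{\tilde\Gamma_t-\phi_t}{1-t}\,d\tilde B_t$, hence the drift is $v_t:=\tfrac{M_t-Y_t}{1-t}=\int_0^t\tfrac{\tilde\Gamma_s-\phi_s}{1-s}\,d\tilde B_s$; as long as $\int_0^1(1-s)^{-2}\,\EE\,\mathrm{Tr}\big((\tilde\Gamma_s-\phi_s)^2\big)\,ds<\infty$ — which is exactly the finiteness of the bounds being proved — this integral converges in $L^2$ as $t\uparrow1$, so $h_1=0$, i.e.\ $Y_1=M_1=S_n$. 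The driftless process $d\hat Y_t=\phi_t\,d\tilde B_t$ satisfies $\hat Y_1\sim\mathcal N(0,\int_0^1\phi_t^2\,dt)=\mathcal N(0,\Sigma)=G$. Applying Girsanov's theorem with $u_t=\phi_t^{-1}v_t$, and then the data-processing inequality for relative entropy (projecting the path law onto its endpoint), gives
\[ \Ent(S_n\|G)\le\tfrac12\,\EE\!\int_0^1\norm{\phi_t^{-1}v_t}^2dt\le\tfrac12\int_0^1\sigma_t^{-2}\,\EE\norm{v_t}^2dt , \]
and by It\^o's isometry together with Fubini $\EE\norm{v_t}^2=\int_0^t(1-s)^{-2}\,\EE\,\mathrm{Tr}\big((\tilde\Gamma_s-\phi_s)^2\big)\,ds$, so that
\[ \Ent(S_n\|G)\le\tfrac12\int_0^1\frac{\EE\,\mathrm{Tr}\big((\tilde\Gamma_s-\phi_s)^2\big)}{(1-s)^2}\Big(\int_s^1\sigma_t^{-2}\,dt\Big)ds . \]

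\textbf{The two estimates of $\EE\,\mathrm{Tr}((\tilde\Gamma_s-\phi_s)^2)$.} Since $\EE\,\mathrm{Tr}(\tilde\Gamma_s^2)=\mathrm{Tr}(\EE[\Gamma_s^2])=\mathrm{Tr}(\phi_s^2)$, expanding the square yields $\EE\,\mathrm{Tr}\big((\tilde\Gamma_s-\phi_s)^2\big)=2\,\mathrm{Tr}\big(\phi_s(\phi_s-\EE[\tilde\Gamma_s])\big)$. For the second inequality of the theorem I would then use $\EE[\tilde\Gamma_s]\preceq\phi_s$ — which follows from operator concavity of the square root applied to $\tilde\Gamma_s=\sqrt{\tfrac1n\sum(\Gamma_s^{(i)})^2}$ — together with $\mathrm{Tr}\big((\phi_s-\EE[\tilde\Gamma_s])\EE[\tilde\Gamma_s]\big)\ge0$, to bound $\mathrm{Tr}\big(\phi_s(\phi_s-\EE[\tilde\Gamma_s])\big)\le\mathrm{Tr}\big(\phi_s^2-\EE[\tilde\Gamma_s]^2\big)=\mathrm{Tr}\big(\EE[\Gamma_s^2]-\EE[\tilde\Gamma_s]^2\big)$; this gives the second displayed bound. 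For the first inequality I would instead apply Lemma~\ref{PositiveDefinite} with $A=\EE[\Gamma_s^2]$ and $B=\tilde\Gamma_s^2=\tfrac1n\sum(\Gamma_s^{(i)})^2$ and take expectations; the cross terms vanish by independence, leaving $\EE\,\mathrm{Tr}\big((\tilde\Gamma_s-\phi_s)^2\big)\le\tfrac1n\,\mathrm{Tr}\big(\EE[(\Gamma_s^2-\EE[\Gamma_s^2])^2]\,\EE[\Gamma_s^2]^{\dagger}\big)\le\tfrac1{n\sigma_s^2}\,\EE\,\mathrm{Tr}\big((\Gamma_s^2-\EE[\Gamma_s^2])^2\big)$, and substituting this above gives the first displayed bound (in fact with room to spare in the constant).

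\textbf{Main obstacle.} The delicate part is the behaviour as $t\uparrow1$: one must know that the bridge really closes ($h_1=0$), that the exponential local martingale in Girsanov's theorem is a true martingale, and that all the manipulations above are legitimate assuming no more than finiteness of the claimed right-hand sides. The clean route is to carry out the entire argument on $[0,1-\eps]$ — there $\tilde B$ is a Brownian motion, all coefficients are bounded, and one obtains the estimate with $G$ replaced by $\mathcal N\big(0,\int_0^{1-\eps}\phi_t^2\,dt\big)$ — and then let $\eps\downarrow0$, using monotone convergence of the integrals, lower semicontinuity of relative entropy, and $\mathcal N\big(0,\int_0^{1-\eps}\phi_t^2\,dt\big)\to\mathcal N(0,\Sigma)$. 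A minor further point is that $v_t$ is adapted to the joint filtration $\FF_t$ rather than to the filtration of $\tilde B$ alone; this is harmless because $\tilde B$ is an $\FF_t$-Brownian motion, so Girsanov applies verbatim. The hypotheses are used exactly where flagged: invertibility of $\Gamma_t$ for $t<1$ to make $\tilde B$ a Brownian motion, $\EE[\Gamma_t]\succeq\sigma_t\mathrm{I_d}$ to control $\phi_t^{-1}$ and $\EE[\Gamma_t^2]^{\dagger}$, and the constancy of the stopping time to run everything on the fixed interval $[0,1]$.
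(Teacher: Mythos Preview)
Your proposal is correct and follows essentially the same route as the paper: the same sum process $\tilde\Gamma_t$, the same drift $v_t=\int_0^t\frac{\tilde\Gamma_s-\phi_s}{1-s}\,d\tilde B_s$, Girsanov plus It\^o isometry and Fubini, and then Lemma~\ref{PositiveDefinite} for the first bound. The one small difference is the second inequality: the paper switches the reference process to $F_t=\EE[\tilde\Gamma_t]$ (so that $\EE\norm{\tilde\Gamma_t-F_t}_{HS}^2=\mathrm{Tr}(\EE[\Gamma_t^2]-\EE[\tilde\Gamma_t]^2)$ exactly) and then uses $\Ent(S_n\|G)\le\Ent(S_n\|G_n)$, whereas you keep $\phi_t=\sqrt{\EE[\Gamma_t^2]}$ throughout and reach the same expression via the trace inequality $\mathrm{Tr}\big((\phi_s-\EE[\tilde\Gamma_s])\EE[\tilde\Gamma_s]\big)\ge0$; both arguments land on the identical final bound.
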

The theorem relies on the following bound, whose proof is postponed to the end of the subsection.
\begin{lemma} \label{lem:rel entropy method}
	Let $\Gamma_t$ be an $\FF_t$-adapted matrix-valued processes and let $F:\RR\times\RR^d \to \RR^{d\times d}$ be almost surely invertible and locally Lipschitz. Denote $F_t(x) := F(t,x)$ and let $X_t, M_t$ be defined by
	$$
	X_t = \int_0^t \Gamma_s dB_s ~~ \mbox{and} ~~ M_t = \int_0^t F_s(M_s) dB_s.
	$$
	Define the process $Y_t$ by
	$$Y_t = \int\limits_0^tF_s(Y_s)dB_s + \int\limits_{0}^t\int\limits_{0}^s\frac{\Gamma_r - F_r(Y_r)}{1-r}dB_rds.$$
	Then,
	$$\mathrm{Ent}\left(X_1||M_1\right) \leq \EE\left[\int\limits_0^1\int\limits_s^1 \norm{F_t^{-1}(Y_t)\frac{\Gamma_s - F_s(Y_s)}{1-s}}_{HS}^2dtds\right]. $$
\end{lemma}
Note that if the process $F_t$ is deterministic, i.e. it is a constant function, then $M_1$ has a Gaussian law, so that the lemma can be used to bound the relative entropy of $X_1$ with respect to a Gaussian.

\begin{proof}[Proof of Theorem \ref{thm: quant entropic clt}]
	Let $(X^{(i)}_t,\Gamma_t^{(i)},1)$ be independent copies of the martingale embedding. Consider the sum process $\tilde{X}_t = \frac{1}{\sqrt{n}} \sum\limits_{i=1}^nX_t^{(i)}$, which satisfies $\tilde{X}_t = \int\limits_0^t\tilde{\Gamma}_sd\tilde{B}_s$ where we define, as in the proof of Theorem \ref{main},
	$$\tilde{\Gamma_t} := \sqrt{\frac{1}{n}\sum\limits_{i=1}^n\left(\Gamma_t^{(i)}\right)^2} \text{ and } d\tilde{B}_t = \frac{1}{\sqrt{n}} \tilde{\Gamma}_t^{-1} \sum \Gamma_t^{(i)}dB_t^{(i)}.$$
	By assumption $\tilde{\Gamma}_t$ is invertible, which makes $\tilde{B}_t$ a Brownian motion. In this case, $(\tilde{X}_t, \tilde{\Gamma}_t, 1)$ is a martingale embedding for the law of $S_n$. For the first bound, consider the process
	$$
	M_t = \int_0^t \sqrt{\EE\left[\Gamma_s^2\right]}d\tilde{B}_s.
	$$
	By It\^o's isometry one has $M_1 \sim \mathcal{N}\left(0,\Sigma\right)$. Also, by Jensen's inequality
	$$\sqrt{\EE\left[\Gamma_t^2\right]} \succeq \EE\left[\Gamma_t\right] \succeq \sigma_t\mathrm{I}_d.$$
	Using this observation and substituting $\sqrt{\EE\left[\Gamma_t^2\right]}$ for a constant function $F_t$ in Lemma \ref{lem:rel entropy method} yields,
	\begin{equation} \label{cor equation}
	\mathrm{Ent}\left(S_n \| G\right)
	\leq \int\limits_0^1\EE\left[\norm{\frac{\tilde{\Gamma_t} - \sqrt{\EE\left[\Gamma_t^2\right]}}{1-t}}_{HS}^2\right]\left(\int\limits_t^1\sigma_s^{-2}ds\right)dt.
	\end{equation}
	With the use of Lemma \ref{PositiveDefinite} we obtain
	\begin{align*}
	\EE\norm{\tilde{\Gamma_t}-\sqrt{\EE\left[\Gamma_t^2\right]}}_{HS}^2 &= \EE\left[\mathrm{Tr}\left(\left(\tilde{\Gamma_t}-\sqrt{\EE\left[\Gamma_t^2\right]}\right)^2\right)\right]\\
	&\leq \EE\left[\mathrm{Tr}\left(\left(\frac{1}{n}\sum\limits_{i=1}^n\left(\Gamma_t^{(i)}\right)^2 - \EE\left[\Gamma_t^2\right]\right)^2\EE\left[\Gamma_t^2\right]^{-1}\right)\right]\\
	 &\leq \frac{1}{n\sigma^2_t}\EE\left[\mathrm{Tr}\left(\left(\Gamma_t^2 - \EE\left[\Gamma_t^2\right]\right)^2\right)\right].
	\end{align*}
	Plugging the above into \eqref{cor equation} shows the first bound.
	To see the second bound, we define a process $M'_t$, which is similar to $M_t$, and is given by the equation
	$$
	M'_t := \int_0^t \EE\left[\tilde{\Gamma_s}\right]d\tilde{B}_s.
	$$
	Let $G_n$ denote a Gaussian which is distributed as $M'_1$. For any $s$, we now have the following Cauchy-Schwartz type inequality
	$$n\left(\sum_{i=1}^n\left(\Gamma_s^{(i)}\right)^2\right)\succeq \left(\sum_{i=1}^n\Gamma^{(i)}_s\right)^2.$$
	Since the square root is monotone with respect to the order on positive definite matrices, this implies $$\EE\left[\tilde{\Gamma_s}\right] \succeq  \frac{1}{n}\EE\left[\sum_{i=1}^n\Gamma_s^{(i)}\right]\succeq \sigma_s\mathrm{I}_d.$$
	Thus,
	\begin{align*}
	\mathrm{Ent}( S_n||G_n)& \leq \EE\left[\int\limits_0^1\int\limits_t^1 \norm{\EE\left[\tilde{\Gamma}_s\right]^{-1}\frac{\tilde{\Gamma_t} - \EE\left[\tilde{\Gamma}_t\right]}{1-t}}_{HS}^2dsdt\right]\\
	&\leq \int\limits_0^1 \EE\left[\norm{\frac{\tilde{\Gamma_t} - \EE\left[\tilde{\Gamma}_t\right]}{1-t}}_{HS}^2\right]\left(\int\limits_t^1\sigma_s^{-2}ds\right)dt\\
	& =
	\int\limits_0^1 \frac{\mathrm{Tr}\left(\EE\left[\Gamma_t^2\right] - \EE\left[\tilde{\Gamma}_t\right]^2\right)}{(1-t)^2}\left(\int\limits_t^1\sigma_s^{-2}ds\right)dt.
	\end{align*}
	Since $\mathrm{Cov}(G) = \mathrm{Cov}(S_n),$ it is now easy to verify that $\mathrm{Ent}\left(S_n||G\right) \leq \mathrm{Ent}\left(S_n||G_n\right)$, which concludes the proof.
\end{proof}
A key component in the proof of the theorem lies in using the norm of an adapted process in order to bound the relative entropy. The following lemma embodies this idea. Its proof is based on a straightforward application of Girsanov's theorem. We provide a sketch and refer the reader to \cite{lehec2013representation}, where a slightly less general version of this lemma is given, for a more detailed proof.
\begin{lemma} \label{lem:entropy bound}
	Let $F:\RR \times \RR^d \to \RR^{d\times d}$ be almost surely invertible and locally Lipschitz. Denote $F_t(x) := F(t,x)$ and let $M_t = \int\limits_0^tF_s(M_s)dB_s$. For $u_t$, an adapted process, set $Y_t := \int\limits_0^tF_s(Y_s)dB_s + \int\limits_0^t u_s ds$. Then
	$$\mathrm{Ent}\left(Y_1||M_1\right) \leq \frac{1}{2}\int\limits_0^1\EE\left[\norm{F_t^{-1}(Y_t)u_t}^2\right]dt.$$
\end{lemma}
\begin{proof}
	Since $M_t$ is an It\^o diffusion, by Girsanov's theorem (\cite[Theorem 8.6.5]{oksendal2003stochastic}), the density of $\{Y_t\}_{t \in [0,1]}$ with respect to that of $\{ M_t \}_{t \in [0,1]}$ on the space of paths is given by
	$$\mathcal{E} := \exp\left(-\int\limits_0^1F_t(Y_t)^{-1}u_tdB_t - \frac{1}{2}\int\limits_0^1\norm{F_t(Y_t)^{-1}u_t}^2dt\right).$$
	If $f$ is the density of $Y_1$ with respect to $M_1$, this implies
	$$1 = \EE\left[f(Y_1)\mathcal{E}\right].$$
	By Jensen's inequality
	$$0 = \ln\left(\EE\left[f(Y_1)\mathcal{E}\right]\right) \geq \EE\left[\ln\left(f(Y_1)\mathcal{E}\right)\right] = \EE\left[\ln(f(Y_1))\right] + \EE\left[\ln(\mathcal{E})\right].$$
	But,
	$$\EE\left[\ln(\mathcal{E})\right] = - \frac{1}{2}\int\limits_0^1\EE\left[\norm{F_t^{-1}(Y_t)u_t}^2\right]dt,$$
	and
	$$\EE\left[\ln(f(Y_1))\right] = \mathrm{Ent}(Y_1||M_1),$$
	which concludes the proof.
\end{proof}

The proof of Lemma \ref{lem:rel entropy method} now amounts to invoking the above bound with a suitable construction of the drift process $u_t$.

\begin{proof}[Proof of Lemma \ref{lem:rel entropy method}]
By defintion of the process $Y_t$, we have the following equality
\begin{equation} \label{fubini}
Y_1 = \int\limits_0^1F_t(Y_t)dB_t + \int\limits_0^1\int\limits_0^t\frac{\Gamma_s-F_s(Y_s)}{1-s}dB_sdt = \int\limits_0^1F_t(Y_t)dB_t + \int\limits_0^1\left(\Gamma_t-F_t(Y_t)\right)dB_t =X_1,
\end{equation}
where we have used Fubini's theorem in the penultimate equality.
Now, consider the adapted process
$$u_t = \int\limits_0^t \frac{\Gamma_s - F_s(Y_s)}{1-s}dB_s,$$
so that,
$$dY_t = F_t(Y_t)dB_t + u_tdt.$$
Applying Lemma \ref{lem:entropy bound} and using It\^o's isometry, we get
\begin{align*}
\mathrm{Ent}( X_1||M_1)&\leq\int\limits_0^1\EE\left[\norm{F_t^{-1}(Y_t)u_t}^2\right]dt
= \int\limits_0^1\EE\left[\norm{ \int\limits_0^tF_t^{-1}(Y_t)\frac{\Gamma_s - F_s(Y_s)}{1-s}dB_s}^2\right]dt \\
&= \EE\left[\int\limits_0^1\int\limits_0^t\norm{F_t^{-1}(Y_t)\frac{\Gamma_s - F_s(Y_s)}{1-s}}_{HS}^2dsdt\right] \\
& = \EE\left[\int\limits_0^1\int\limits_s^1 \norm{F_t(Y_t)^{-1}\frac{\Gamma_s - F_s(Y_s)}{1-s}}_{HS}^2dtds\right],
\end{align*}
where last equality follows from another use of Fubini's theorem.
\end{proof}

\subsection{A stochastic construction} \label{construction}

In this section we introduce the main construction used in our proofs,
a martingale process which meets the assumptions of Theorems
\ref{main} and \ref{thm: quant entropic clt}. The construction in the next proposition is based on the
Skorokhod embedding described in \cite{eldan2016skorokhod}. Most of
the calculations in this subsection are very similar to what is done
in \cite{eldan2016skorokhod}, except that we allow some inhomogeneity
in the quadratic variation according to the function $C_t$
below. In particular, $C_t$ will be a symmetric matrix almost surely, and we will denote the space of $d\times d$ \emph{symmetric} matrices by $\mathrm{Sym}_d$.

\begin{proposition} \label{prop:stochastic-localization}
  Let $\mu$ be a probability measure on $\RR^d$ with smooth density and bounded
  support. For a probability measure-valued process $\mu_t$, let
  \[ a_t = \int_{\RR^d} x \mu_t(dx), \quad A_t = \int_{\RR^d} (x - a_t)^{\otimes 2} \mu_t(dx) \]
  denote its mean and covariance.

  Let $C : \RR \times \mathrm{Sym}_d \to \mathrm{Sym}_d$ be a continuous
  function. Then, we can construct $\mu_t$ so that the following
  properties hold:
  \begin{enumerate}
  \item $\mu_0 = \mu$,
  \item $a_t$ is a stochastic process satisfying $da_t =
    A_tC(t, A_t^{\dagger})dB_t$, where $B_t$ is a standard Brownian
    motion on $\RR^d$, and \label{item:da_t}
  \item For any continuous and bounded $\varphi : \RR^d \to \RR$, $\int_{\RR^d} \varphi(x) \mu_t(dx)$ is a martingale. \label{item:martingale}
  \end{enumerate}
\end{proposition}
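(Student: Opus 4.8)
The plan is to construct $\mu_t$ as the solution of a stochastic differential equation for a density process, following the stochastic localization / Skorokhod embedding scheme of \cite{eldan2016skorokhod}, modified to accommodate the prescribed matrix-valued function $C$. Concretely, I would write $\mu_t(dx) = \rho_t(x)\,dx$ with $\rho_0 = \rho$ the density of $\mu$, and posit the SDE
\[
d\rho_t(x) = \rho_t(x)\,\big\langle x - a_t,\; C(t, A_t^\dagger)\, dB_t\big\rangle,
\]
where $a_t, A_t$ are the mean and covariance of $\mu_t$ as defined in the statement. The first task is to show this SDE has a well-defined solution on $[0,\infty)$ (or up to the explosion/degeneration time): since $\mu$ has smooth density and bounded support, for each fixed $x$ the coefficient is smooth in $\rho_t$ through the functionals $a_t, A_t$, and boundedness of the support gives uniform-in-$x$ control, so one gets local existence and can propagate it; one should note that $\rho_t$ stays nonnegative (the multiplicative/exponential form of the equation makes this transparent) and that total mass is preserved, i.e. $\int \rho_t(x)\,dx = 1$ for all $t$, which follows by integrating the SDE in $x$ and using that $\int \rho_t(x)(x-a_t)\,dx = 0$ by definition of $a_t$.

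Next I would verify the three asserted properties. Property (1) is the initial condition, built in. Property (3): for bounded continuous $\varphi$, write $\int \varphi\,d\mu_t = \int \varphi(x)\rho_t(x)\,dx$ and differentiate under the integral using the SDE for $\rho_t$; the drift term is identically zero (there is none in the equation above), so $t \mapsto \int \varphi\,d\mu_t$ is a local martingale, and boundedness of $\varphi$ together with integrability estimates on $\rho_t$ (again using the bounded support and Gronwall-type bounds on moments of $\rho_t$) upgrade it to a true martingale. Property (2) is the computational heart: apply It\^o's formula to $a_t = \int x\,\rho_t(x)\,dx$. The martingale part of $da_t$ comes out, by the definition of $A_t$, as $\int x(x-a_t)^{\otimes 2}\,\rho_t(dx)\cdot$(stuff)$= A_t\, C(t,A_t^\dagger)\,dB_t$ — one checks $\int (x-a_t)(x-a_t)^\top \rho_t\,dx\, C = A_t C$ and that the contribution of the $a_t\int(x-a_t)\rho_t$ piece vanishes. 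One must also confirm that the It\^o correction terms (the finite-variation part of $da_t$) cancel: this is where the precise multiplicative form $\langle x-a_t, C\,dB_t\rangle$ matters, and it is essentially the same cancellation verified in \cite{eldan2016skorokhod}, the only new feature being that $C$ depends on $t$ and on $A_t^\dagger$ rather than being the identity — but since $C(t,A_t^\dagger)$ is $\mathcal{F}_t$-measurable and enters linearly in $dB_t$, the quadratic-variation bookkeeping is unchanged.

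The main obstacle I anticipate is the well-posedness and regularity of the measure-valued SDE when $C(t,A_t^\dagger)$ is merely continuous: $A_t$ can degenerate (lose rank) as $\mu_t$ concentrates, so $A_t^\dagger$ need not be continuous up to that time, and one has to argue either that the process is only run until $A_t$ degenerates along a subspace and then behaves consistently there, or restrict attention to $t$ in a range where the relevant estimates hold — the proposition as stated only claims we "can construct" such a $\mu_t$, so I would handle this by first solving on $[0,\tau)$ for $\tau$ the (stopping) time at which the support of $\mu_t$ collapses to a point or a proper affine subspace, observing that the pushed-down process on that subspace is again of the same type, and thereby patching together a global solution; alternatively one can invoke a standard approximation argument (mollify/truncate $C$, solve, and pass to the limit using the moment bounds). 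The remaining steps — the It\^o computations for $a_t$ and $A_t$, and the martingale property — are routine given the template in \cite{eldan2016skorokhod}, so the write-up should mostly reference those computations and emphasize the modifications caused by the inhomogeneous factor $C(t, A_t^\dagger)$.
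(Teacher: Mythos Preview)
Your plan is correct and follows essentially the same strategy as the paper: posit the density dynamics $d\mu_t(dx)=\mu_t(dx)\langle x-a_t,\,C_t\,dB_t\rangle$, verify by It\^o calculus that $\int\varphi\,d\mu_t$ is a martingale and that $da_t=A_tC_t\,dB_t$, and handle the degeneration of $A_t$ by restarting the construction on the lower-dimensional affine hull (the paper iterates this at most $d$ times).

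The one substantive difference is in how existence is established. You argue directly with the infinite-dimensional density SDE and appeal to bounded support for uniform control; this can be made to work but is somewhat delicate since the coefficients depend nonlinearly on the entire density through $a_t,A_t$. The paper instead observes that the solution is always of the form $\mu_t(dx)\propto e^{-Q_t(x)}\mu(dx)$ for a quadratic $Q_t$, and hence is parametrized by a pair $(c_t,\tilde{\Sigma}_t)\in\RR^d\times\RR^{d\times d}$ solving an explicit \emph{finite-dimensional} SDE with continuous coefficients, to which standard existence/uniqueness applies up to the first time $\tilde{\Sigma}_t$ (equivalently $A_t$) degenerates. This reduction to finite dimensions is the cleanest way to dispatch the well-posedness issue you flag as your ``main obstacle,'' and it also makes transparent the later-used fact that $\mu_t$ is always a Gaussian tilt of $\mu$. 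Otherwise the computations you outline coincide with the paper's.
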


\begin{remark}
  We will be mainly interested in situations where $\mu_t$ converges
  almost surely to a point mass in finite time. In this case, we
  obtain a martingale embedding $(a_t, A_tC(t, A_t^{\dagger}), \tau)$
  for $\mu$, where $\tau$ is the first time that $\mu_t$ becomes a
  point mass.
\end{remark}

In the sequel, we abbreviate $C_t := C(t, A_t^\dagger)$. We first give an informal description of how $\mu_{t + \epsilon}$ is
constructed from $\mu_t$ for $\epsilon \rightarrow 0$. Consider a
stochastic process $\{ X_s \}_{0 \le s \le 1}$ in which we first
sample $X_1 \sim \mu_t$ and then set
\[ X_s = (1 - s) a_t + sX_1 + C_t^{-1} B_s, \]
where $B_s$ is a standard Brownian bridge. We can write $X_{\epsilon}
= a_t + \sqrt{\epsilon} C_t^{-1} Z$, where $Z$ is close
to a standard Gaussian. We then take $\mu_{t + \epsilon}$ to be the
conditional distribution of $X_1$ given $X_\epsilon$. This immediately
ensures that property \ref{item:martingale} holds and that $a_t$ is a
martingale.

It remains to see why property \ref{item:da_t} holds. A direct
calculation with conditioned Brownian bridges gives a first-order
approximation
\begin{align*}
  \mu_{t + \epsilon}(dx) &\propto e^{- \frac{1}{2} (\sqrt{\epsilon} C_t^{-1} Z - \epsilon(x - a_t))^T C_t^2 (\sqrt{\epsilon} C_t^{-1} Z - \epsilon(x - a_t))} \mu_t(dx) \\
  &\propto e^{\sqrt{\epsilon} \langle C_t Z, x - a_t \rangle + O(\epsilon)} \mu_t(dx) \\
  &\approx (1 + \sqrt{\epsilon} \langle C_t Z, x - a_t \rangle) \mu_t(dx).
\end{align*}
Then, to highest order, we have
\[ a_{t + \epsilon} - a_t \approx \sqrt{\epsilon} \int_{\RR^d} \langle C_t Z, x - a_t \rangle (x - a_t) \,\mu_t(dx) = \sqrt{\epsilon} A_t C_t Z, \]
which translates into property \ref{item:da_t} as $\epsilon
\rightarrow 0$.

Observe that the procedure outlined above yields measures $\mu_t$ that
have densities which are proportional to the original density $\mu$
times a Gaussian density. (This applies at least when $A_t$ is
non-degenerate; something similar also holds when $A_t$ is degenerate,
as we will see shortly.) Let us now perform the construction
formally. We will proceed by iterating the following preliminary
construction, which handles the case when $A_t$ remains
non-degenerate.

\begin{lemma} \label{lem:stochastic-localization-prelim}
  Let $\mu$ be a measure on $\RR^d$ with smooth density and bounded
  support, and let $C : \RR \times\mathrm{Sym}_d \to \mathrm{Sym}_d$ be a
  continuous map. Then, there is a measure-valued process $\mu_t$ and
  a stopping time $T$ such that $\mu_t$ satisfies the properties in
  Proposition \ref{prop:stochastic-localization} for $t < T$ and the
  affine hull of the support of $\mu_T$ has dimension strictly less
  than $d$. Moreover, if $\mu_T$ is considered as a measure on this
  affine hull, it has a smooth density.
\end{lemma}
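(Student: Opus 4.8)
The plan is to realize the informal description rigorously as the solution of a stochastic differential equation for the \emph{density}, run up to the first time the covariance $A_t$ becomes degenerate. Concretely, I would define a process of densities $f_t$ on $\RR^d$ (with $f_0$ the given smooth density of $\mu$) by the SDE
\[
df_t(x) = f_t(x)\,\langle C_t(x-a_t),\, dB_t\rangle,
\]
where $a_t = \int x f_t(x)\,dx$, $A_t = \int (x-a_t)^{\otimes2} f_t(x)\,dx$, and $C_t = C(t, A_t^\dagger)$. Since $\mu$ has bounded support, say contained in a ball of radius $R$, the coefficient $x \mapsto f_t(x)\langle C_t(x-a_t),\cdot\rangle$ is, for each fixed realization, Lipschitz in $f_t$ on the relevant function space as long as $\|C_t\|$ stays bounded; so I would first argue local existence and uniqueness of a solution taking values in smooth densities supported in that ball, by a standard Picard iteration / Gr\"onwall argument, valid on a maximal interval $[0,T)$. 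Property \ref{item:martingale} is then immediate: for bounded continuous $\varphi$, $d\!\left(\int \varphi f_t\right) = \int \varphi f_t \langle C_t(x-a_t), dB_t\rangle$ has no drift term, so it is a (local, hence by boundedness true) martingale. Taking $\varphi(x)=x_j$ gives property \ref{item:da_t}: $da_t = \int (x-a_t) f_t(x)\langle C_t(x-a_t),dB_t\rangle = A_t C_t\, dB_t = A_t C(t,A_t^\dagger)\,dB_t$, using $C_t = C(t,A_t^\dagger)$ and the definition of $A_t$ as the covariance.

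Next I would verify the structural claim that $f_t$ is, at each time, proportional to $f_0$ times a Gaussian density. Applying It\^o's formula to $\log f_t(x)$ (legitimate while $f_t(x)>0$) gives
\[
d\log f_t(x) = \langle C_t(x-a_t), dB_t\rangle - \tfrac12 \|C_t(x-a_t)\|^2\,dt,
\]
so that $\log f_t(x) - \log f_0(x)$ is, as a function of $x$, an explicit quadratic $-\tfrac12 x^{\mathsf T}\!\left(\int_0^t C_s^2\,ds\right)x + \langle \ell_t, x\rangle + (\text{const})$, with a symmetric positive semidefinite ``precision'' matrix $Q_t := \int_0^t C_s^2\,ds$ and a random linear part $\ell_t$. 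Hence $f_t(x) \propto f_0(x)\, e^{-\frac12 x^{\mathsf T} Q_t x + \langle \ell_t, x\rangle}$, which is a valid probability density (smooth, supported in the radius-$R$ ball) as long as $Q_t$ is finite, i.e.\ as long as $\int_0^t \|C_s\|^2\,ds < \infty$. This representation lets me control the maximal interval: the only way the solution can fail to extend is if $A_t \to 0$ in some direction — forcing $A_t^\dagger$, and potentially $C_t$, to blow up — and this is precisely the degeneration we want to stop at. I would therefore set $T := \inf\{t : \lambda_{\min}(A_t) = 0\}$ and argue that on $[0,T)$ the process is well defined, while at $T$ the support of $\mu_T$ (the weak limit of $\mu_t$, which exists by the martingale / tightness-in-a-fixed-ball argument) is contained in an affine subspace of dimension $< d$: the direction(s) along which $A_t \to 0$ collapse. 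On that affine hull, $\mu_T$ has density proportional to $f_0$ restricted and re-normalized, times the limiting Gaussian factor $e^{-\frac12 x^{\mathsf T}Q_T x + \langle \ell_T,x\rangle}$ (interpreted within the subspace, where $Q_T$ may have infinite entries only in the killed directions), hence is smooth there.

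The main obstacle I anticipate is the analysis near the stopping time $T$ — showing that the construction genuinely ``uses up'' exactly one (or more) dimension cleanly and that $\mu_T$ lands on an honest lower-dimensional affine subspace with a smooth density, rather than, say, $A_t$ oscillating or the density degenerating in an uncontrolled way. The key technical points are: (i) $A_t$ is a continuous semimartingale (compute $dA_t$ via It\^o, using $da_t = A_t C_t dB_t$ and the density SDE), so $\lambda_{\min}(A_t)$ is continuous and $T$ is a genuine stopping time; (ii) on any $[0, t]$ with $t < T$, $\|A_s^\dagger\|$ is bounded, so $C_s = C(s, A_s^\dagger)$ is bounded by continuity of $C$, giving $\int_0^t\|C_s\|^2 ds < \infty$ and hence the density representation and well-posedness on $[0,T)$; (iii) at $T$, pass to the limit: boundedness of supports gives tightness, the martingale property gives convergence of $\mu_t$ to some $\mu_T$, and $A_T = \lim A_t$ has a nontrivial kernel, so $\mu_T$ is supported on $a_T + \mathrm{range}(A_T)$, a subspace of dimension $\le d-1$; restricting the Gaussian-times-$f_0$ formula to this subspace (where the precision in the non-killed directions is finite) shows the density is smooth. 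Finiteness of $T$ is not claimed here and need not be addressed — only that the stated properties hold for $t < T$ and that $\mu_T$ has the asserted lower-dimensional smooth structure. One should also note uniqueness is not needed; any such process suffices, so it is enough to exhibit one via the SDE above.
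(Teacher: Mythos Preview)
Your proposal is correct and follows essentially the same approach as the paper: construct the stochastic localization process via the density dynamics $dF_t(x)=F_t(x)\langle C_t(x-a_t),dB_t\rangle$, observe that the resulting density is $f_0$ times a Gaussian factor, and stop at the first time $A_t$ degenerates.

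The one practical difference is the order of presentation. You take the (a priori infinite-dimensional) density SDE as the primary object, argue local well-posedness by Picard iteration, and then use It\^o on $\log f_t$ to recover the Gaussian-times-$f_0$ structure with precision $Q_t=\int_0^t C_s^2\,ds$. The paper instead \emph{starts} from a finite-dimensional SDE for the Gaussian parameters $(c_t,\tilde\Sigma_t)$, with $d\tilde\Sigma_t=-\tilde\Sigma_tC_t^2\tilde\Sigma_t\,dt$ and an explicit drift for $c_t$, then \emph{defines} $\mu_t$ as the normalized measure $e^{-Q_t(x)}\tilde\mu(dx)$ and verifies by direct calculation that $dF_t(x)=F_t(x)\langle x-a_t,C_t\,dB_t\rangle$ and $da_t=A_tC_t\,dB_t$. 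The advantage of the paper's ordering is that existence and uniqueness up to the degeneracy time follow immediately from standard finite-dimensional SDE theory, sidestepping any discussion of density-valued SDEs or Picard iteration in function space; your observation that the Gaussian structure effectively reduces things to finite dimensions is exactly what makes this shortcut available. The near-$T$ analysis is treated at the same level of detail in both: the paper also just asserts that ``by a limiting procedure'' $\mu_T$ has a smooth density on its affine hull, being a Gaussian tilt of (the conditional density of) $\mu$.
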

\begin{proof}
  We will construct a $(\RR^d \times \mathrm{Sym}_d)$-valued
  stochastic process $(c_t, \tilde{\Sigma}_t)$ started at $(c_0, \tilde{\Sigma}_0) =
  (0, \Id)$. Let us write
  \[ Q_t(x) = \frac{1}{2} \left\langle x - c_t, \tilde{\Sigma}^{-1}_t (x - c_t) \right\rangle, \]
  and let $\tilde{\mu}$ be the probability measure satisfying $\frac{d
    \tilde{\mu}}{d\mu}(x) \propto e^{\frac{1}{2}\|x\|^2}$. We will
  then take $\mu_t$ to be $\mu_t(dx) = F_t(x) \tilde{\mu}(dx)$, where
  \[ F_t(x) = \frac{1}{Z_t} e^{-Q_t(x)}, \qquad Z_t = \int_{\RR^d} e^{-Q_t(x)} \tilde{\mu}(dx). \]
  Note that since $\tilde{\Sigma}_0 = \Id$, we have $\mu_0 =
  \mu$.\footnote{Conceptually, one can replace all instances of
    $\tilde{\mu}$ with $\mu$ if we think of the initial value
    $\tilde{\Sigma}_0$ as being an ``infinite'' multiple of
    identity. However, to avoid issues with infinities, we have
    expressed things in terms of $\tilde{\mu}$ instead.}

  In order to specify the process, it remains to construct $(c_t,
  \tilde{\Sigma}_t)$. We take it to be the solution to the SDE
  \[ dc_t = \tilde{\Sigma}_t C_t dB_t + \tilde{\Sigma}_t C_t^2 (a_t - c_t) dt, \qquad d\tilde{\Sigma}_t = -\tilde{\Sigma}_t C_t^2 \tilde{\Sigma}_t dt. \]
  Note that the coefficients of this SDE are continuous functions of
  $(c_t, \tilde{\Sigma}_t)$ so long as $\tilde{\Sigma}_t \succ 0$. By standard
  existence and uniqueness results, this SDE has a unique solution up
  to a stopping time $T$ (possibly $T = \infty$), at which point $A_t$
  (and hence $\tilde{\Sigma}_t$) becomes degenerate. Observe that, for every $t$, $\tilde{\Sigma}_t \preceq \mathrm{I}_d$ and so, the matrix process is continuous on the interval $[0,T]$. 

  By a limiting procedure, it is easy to see that $\mu_T$ has a smooth
  density when considered as a measure on the affine hull of its
  support. (Indeed, its density is proportional to the conditional
  density of $\tilde{\mu}$ times a Gaussian density.) It remains to
  verify that $\mu_t$ is a martingale and $da_t = A_t C_t
  dB_t$.

  By direct calculation, we have
  \begin{align*}
    d(\tilde{\Sigma}^{-1}_t) &= C_t^2 dt \\
    d(\tilde{\Sigma}^{-1}_t c_t) &= C_t^2 c_t dt + C_t^2(a_t - c_t) dt + C_t dB_t \\
    &= C_t^2 a_t dt + C_t dB_t \\
    dQ_t(x) &= \left\langle x, \left( \frac{1}{2} C_t^2 x - C_t^2 a_t \right) dt - C_t dB_t \right\rangle \\
    d(e^{-Q_t(x)}) &= -e^{-Q_t(x)} dQ_t(x) + \frac{1}{2} e^{-Q_t(x)} d[Q_t(x)] \\
    &= e^{-Q_t(x)} \left\langle x, C_t dB_t + C_t^2 a_t dt \right\rangle \\
  \end{align*}
  Integrating against $\tilde{\mu}(dx)$, we obtain
  \begin{align*}
    dZ_t &= Z_t \left\langle a_t, C_t dB_t + C_t^2 a_t dt \right\rangle \\
    d Z^{-1}_t &= -\frac{1}{Z_t^2} dZ_t + \frac{1}{Z_t^3} d[Z_t] = \frac{1}{Z_t} \langle a_t, -C_t dB_t \rangle \\
    dF_t(x) &= e^{-Q_t(x)} d Z^{-1}_t + Z^{-1}_t d(e^{-Q_t(x)}) + d[Z^{-1}_t, e^{-Q_t(x)}] \\
    &= F_t(x) \cdot \langle x - a_t, C_t dB_t \rangle.
  \end{align*}
  Thus, $F_t(x)$ is a martingale for each fixed $x$, and furthermore,
  \[ da_t = d \int_{\RR^d} x \mu_t(dx) = \int_{\RR^d} x d\mu_t(dx) = \int_{\RR^d} x(x - a_t)C_t\mu_t(dx) dB_t = A_t C_t dB_t. \]
\end{proof}

\begin{proof}[Proof of Proposition \ref{prop:stochastic-localization}]
  We use the process given by Lemma
  \ref{lem:stochastic-localization-prelim}, which yields a stopping
  time $T_1$ and a measure $\mu_{T_1}$ with a strictly
  lower-dimensional support. If $\mu_T$ is a point mass, then we set
  $\mu_t = \mu_T$ for all $t \ge T$.

  Otherwise, by the smoothness properties of $\mu_{T_1}$ guaranteed by
  Lemma \ref{lem:stochastic-localization-prelim}, we can recursively
  apply Lemma \ref{lem:stochastic-localization-prelim} again on
  $\mu_{T_1}$ conditioned on the affine hull of its support. Repeating
  this procedure at most $d$ times gives us the desired process.
\end{proof}

\subsection{Properties of the construction}

We record here various formulas pertaining to the quantities $a_t$,
$A_t$, and $\mu_t$ constructed in Proposition
\ref{prop:stochastic-localization}.

\begin{proposition} \label{prop:sigma-evolution}
  Let $\mu$, $C_t$, and $\mu_t$ be as in Proposition
  \ref{prop:stochastic-localization}. Then, there is a $\mathrm{Sym}_d$-valued process $\{ \Sigma_t \}_{t > 0}$ satisfying the
  following:
  \begin{itemize}
  \item For all $t$, there is an affine subspace $L = L_t \subset \RR^d$ and a Gaussian measure $\gamma_t$ on $\RR^d$, supported on $L$, with covariance
    $\Sigma_t$ such that $\mu_t$ is absolutely continuous with respect
    to $\gamma_t$, and
    \[ \frac{d\mu_t}{d\gamma_t}(x) \propto \mu(x),\ \forall x\in L. \]
  \item $\Sigma_t$ is continuous and for almost every $t$ obeys the differential equation
    \[ \frac{d}{dt} \Sigma_t = -\Sigma_t C_t^2 \Sigma_t. \]
  \item $\lim_{t \rightarrow 0^+} \Sigma_t^{-1} = 0$.
  \end{itemize}
\end{proposition}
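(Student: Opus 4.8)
The plan is to follow the relative density $G_t(x):=\dfrac{d\mu_t}{d\mu}(x)$ along the construction and to show that, at each time $t$, it is proportional to a Gaussian density whose inverse covariance is $\int_0^t C_s^2\,ds$; the three assertions then drop out. To get an equation for $G_t$, recall that in the proof of Lemma \ref{lem:stochastic-localization-prelim} the measure $\mu_t$ is built as $\mu_t(dx)=F_t(x)\tilde\mu(dx)$ with $dF_t(x)=F_t(x)\langle x-a_t,C_t\,dB_t\rangle$; since $\tilde\mu$ and $\mu$ are fixed deterministic measures and $\mu_0=\mu$, the same computation gives $d\mu_t(dx)=\mu_t(dx)\,\langle x-a_t,C_t\,dB_t\rangle$, hence $dG_t(x)=G_t(x)\,\langle x-a_t,C_t\,dB_t\rangle$ with $G_0\equiv 1$. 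Solving this linear equation for each fixed $x$,
$$G_t(x)=\exp\!\left(\int_0^t\langle x-a_s,C_s\,dB_s\rangle-\frac12\int_0^t\|C_s(x-a_s)\|^2\,ds\right).$$

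Next I would isolate the dependence on $x$ in the exponent: the It\^o integral contributes only a term affine in $x$ (with adapted, $x$-independent coefficients), while $-\frac12\int_0^t\|C_s(x-a_s)\|^2\,ds$ contributes the quadratic piece $-\frac12\big\langle x,\big(\int_0^t C_s^2\,ds\big)x\big\rangle$ plus further affine and constant terms. Hence, setting $\Sigma_t^{-1}:=\int_0^t C_s^2\,ds$ and letting $v_t$ be the resulting adapted linear coefficient, $G_t(x)\propto\exp\!\big(\langle v_t,x\rangle-\tfrac12\langle x,\Sigma_t^{-1}x\rangle\big)$, and completing the square
$$\mu_t(dx)=G_t(x)\,\mu(dx)\ \propto\ \mu(x)\,\exp\!\Big(-\tfrac12\big\langle x-\Sigma_t v_t,\ \Sigma_t^{-1}(x-\Sigma_t v_t)\big\rangle\Big)\,dx,$$
which is exactly the first bullet with $\gamma_t:=\mathcal{N}(\Sigma_t v_t,\Sigma_t)$ (note $\mu$ has a bounded continuous density, so $\mu_t\ll\gamma_t$). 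The remaining two bullets follow from the identity $\Sigma_t^{-1}=\int_0^t C_s^2\,ds$: differentiating gives $\frac{d}{dt}\Sigma_t^{-1}=C_t^2$, and then differentiating $\Sigma_t\Sigma_t^{-1}=\Id$ gives $\frac{d}{dt}\Sigma_t=-\Sigma_t C_t^2\Sigma_t$; and $\lim_{t\to0^+}\Sigma_t^{-1}=\lim_{t\to0^+}\int_0^t C_s^2\,ds=0$ by continuity of $C$. (Equivalently, in the notation of Lemma \ref{lem:stochastic-localization-prelim} one has $\Sigma_t^{-1}=\tilde\Sigma_t^{-1}-\Id$, and then both the ODE and the initial behaviour are immediate from $d\tilde\Sigma_t=-\tilde\Sigma_t C_t^2\tilde\Sigma_t\,dt$ and $\tilde\Sigma_0=\Id$.)

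The step I expect to need the most care is the degenerate regime together with the gluing built into Proposition \ref{prop:stochastic-localization}. The computation above governs the first interval, on which the affine hull of the support of $\mu_t$ is all of $\RR^d$ and $\int_0^t C_s^2\,ds\succ 0$ for $t>0$, so $\Sigma_t$ is a genuine covariance matrix. At each of the (at most $d$) times where the support loses a dimension, one restarts the same argument on the current affine hull; globally $\Sigma_t$ must then be read as a possibly degenerate covariance supported on that affine hull, with the ODE holding piecewise and ``$\propto\mu(x)$'' reinterpreted relative to the corresponding conditional density. Verifying that these pieces fit together consistently is the main obstacle; by contrast the stochastic calculus itself — solving the linear equation for $G_t$, the Fubini interchange in the $\|C_s(x-a_s)\|^2$ term, and the positivity and smoothness of $G_t$ — is routine given the regularity supplied by Lemma \ref{lem:stochastic-localization-prelim}.
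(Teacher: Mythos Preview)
Your argument is correct and follows the same underlying idea as the paper --- identifying the Gaussian multiplier in $d\mu_t/d\mu$ and reading off its covariance --- but your route is somewhat more direct. The paper defines $\Sigma_t$ piecewise in terms of the auxiliary process $\tilde{\Sigma}_t$ from Lemma~\ref{lem:stochastic-localization-prelim} (specifically $\Sigma_t=(\tilde{\Sigma}_t^{-1}-H)^{-1}$ for suitable $H$ on each phase), and then verifies the ODE by differentiating that formula. You instead solve the linear SDE for $G_t=d\mu_t/d\mu$ as a Dol\'eans--Dade exponential and obtain the closed form $\Sigma_t^{-1}=\int_0^t C_s^2\,ds$ directly, from which all three assertions are immediate; your parenthetical remark $\Sigma_t^{-1}=\tilde{\Sigma}_t^{-1}-\Id$ shows precisely how the two computations match. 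What your approach buys is a single global formula that makes the ODE and the initial behaviour transparent without tracking $\tilde{\Sigma}_t$; what the paper's approach buys is that the gluing across the times $T_k$ is handled explicitly via the recursion $\Sigma_{T_k+t}^{-1}=\tilde{\Sigma}_{k,t}^{-1}+\Sigma_{T_k}^{-1}-\Id$, whereas you (correctly) flag this as the step requiring care. In fact your formula survives the gluing as well: restricted to the current affine hull one still has $\Sigma_t^{-1}=\int_0^t C_s^2\,ds$, since the paper's recursion unwinds to exactly that.
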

\begin{proof}
  For $1 \le k \le d$, let $T_k$ denote the first time the measure
  $\mu_t$ is supported in a $(d - k)$-dimensional affine
  subspace, and denote by $L_t$ the affine hall of the support of $\mu_t$. We will define $\Sigma_t$ inductively for each interval
  $[T_{k-1}, T_k]$. Recall from the proof of Proposition
  \ref{prop:stochastic-localization} that $\mu_t$ is constructed by iteratively
  applying Lemma \ref{lem:stochastic-localization-prelim} to affine
  subspaces of decreasing dimension $d, d - 1, d - 2, \ldots , 1$. Let
  $\tilde{\Sigma}_{k,t}$ denote the quantity $\tilde{\Sigma}_t$, from the $k$-th application of Lemma \ref{lem:stochastic-localization-prelim}, so that $\tilde{\Sigma}_{k,t}$ is a linear operator on the subspace $L_{T_k}$.

  For the base case $0 < t \le T_1$, take $\Sigma_t =
  (\tilde{\Sigma}^{-1}_{0,t} - \Id)^{-1}$. A straightforward
  calculation shows that over this time interval,
  $\frac{d\mu_t}{d\mu}$ is proportional to the density of a Gaussian
  with covariance $\Sigma_t$. Note that since $\tilde{\Sigma}^{-1}_{0,
    0} = \Id$, we also have $\lim_{t \rightarrow 0^+} \Sigma_t^{-1} =
 0$.

  Now suppose that $\Sigma_t$ has been defined up until time $T_k$; we
  will extend it to time $T_{k+1}$. Let $L_k$ denote the affine hull
  of the support of $\mu_{T_k}$, so that $\dim(L_k) = d - k$ (if
  $\dim(L_k) < d - k$, then we simply have $T_{k+1} = T_k$). Then, for
  $0 \le t \le T_{k+1} - T_k$, we may set
  \[ \Sigma_{T_k + t} := \left( \tilde{\Sigma}^{-1}_{k,t} + \Sigma_{T_k}^{-1} - \Id \right)^{-1}, \]
  where the quantities involved are matrices over the subspace
  parallel to $L_k$ but may also be regarded as degenerate bilinear
  forms in the ambient space $\RR^d$. First, observe that continuity of the processes $\tilde{\Sigma}_{k,t}$ implies the same for $\Sigma_t$. Once again, a straightforward
  calculation shows that for $T_k \le t < T_{k+1}$,
  $\frac{d\mu_t}{d\mu}$ is proportional to the density of a Gaussian
  with covariance $\Sigma_t$, where we view $\mu_t$ and $\mu$ as
  densities on $L_k$ (for $\mu$, we take its conditional density on
  $L_k$). 

  It remains only to show that $\Sigma_t$ satisfies the required
  differential equation. From our construction, we see that $\Sigma_t$
  always takes the form $\left( \tilde{\Sigma}^{-1}_t - H
  \right)^{-1}$, where $H \preceq \Id$ and
  \[ \frac{d}{dt} \tilde{\Sigma}_t = - \tilde{\Sigma}_t C_t^2 \tilde{\Sigma}_t. \]
  Then, we have
  \begin{align*}
    \frac{d}{dt} \Sigma_t &= -\left( \tilde{\Sigma}^{-1}_t - H \right)^{-1} \left( \frac{d}{dt} \tilde{\Sigma}^{-1}_t \right) \left( \tilde{\Sigma}^{-1}_t - H \right)^{-1} \\
    &= -\Sigma_t \left(-\tilde{\Sigma}_t^{-1} \left( \frac{d}{dt} \tilde{\Sigma}_t \right) \tilde{\Sigma}_t^{-1} \right) \Sigma_t \\
    &= -\Sigma_t C_t^2 \Sigma_t,
  \end{align*}
  as desired.
\end{proof}

\begin{proposition} \label{dAt}
	$dA_t = \int\limits_{\RR^d}(x-a_t)^{\otimes 3}\mu_t(dx)C_tdB_t - A_tC_t^2A_tdt$
\end{proposition}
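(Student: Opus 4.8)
The plan is to write $A_t = P_t - a_t^{\otimes 2}$ with $P_t := \int_{\RR^d} x^{\otimes 2}\mu_t(dx)$, differentiate each of the two pieces using the It\^o calculus already established in this subsection, and then reorganize the martingale part algebraically into the stated third-moment tensor.

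First I would recall from the proof of Lemma \ref{lem:stochastic-localization-prelim} that the density $F_t(x) = \tfrac{d\mu_t}{d\tilde\mu}(x)$ obeys $dF_t(x) = F_t(x)\,\langle x - a_t, C_t dB_t\rangle$. Consequently, for any fixed tensor-valued $\varphi$ that is bounded on the support of $\mu$ (which is automatic here, as $\mu$ has bounded support),
\[
d\Big(\int_{\RR^d}\varphi(x)\,\mu_t(dx)\Big) = \int_{\RR^d}\varphi(x)\,\langle x - a_t, C_t dB_t\rangle\,\mu_t(dx),
\]
and in particular this expression has no bounded-variation part. Taking $\varphi(x)=x^{\otimes 2}$ gives
\[
dP_t = \Big(\int_{\RR^d} x^{\otimes 2}\otimes (x - a_t)\,\mu_t(dx)\Big)C_t dB_t .
\]

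Second I would apply It\^o's product rule to $a_t^{\otimes 2}$, using property \ref{item:da_t}, namely $da_t = A_tC_tdB_t$. Since $B_t$ is a standard Brownian motion and $A_t, C_t$ are symmetric, the quadratic covariation is $d[a_t,a_t] = A_tC_t^2A_t\,dt$, so
\[
d(a_t^{\otimes 2}) = (A_tC_tdB_t)\otimes a_t + a_t\otimes(A_tC_tdB_t) + A_tC_t^2A_t\,dt .
\]
Subtracting, the bounded-variation part of $dA_t = dP_t - d(a_t^{\otimes 2})$ is exactly $-A_tC_t^2A_t\,dt$, which is the drift term in the statement; it remains to identify the martingale part.

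Third — the only step requiring genuine manipulation — I would substitute $x = (x-a_t) + a_t$ inside the integrand of $dP_t$ and expand $x^{\otimes 2}\otimes(x-a_t)$. Using $\int_{\RR^d}(x-a_t)\,\mu_t(dx)=0$ and $\int_{\RR^d}(x-a_t)^{\otimes 2}\,\mu_t(dx)=A_t$, the purely-$a_t$ contribution vanishes and the mixed terms integrate to a symmetric combination of $a_t$ and $A_t$; after contraction with $C_t dB_t$ these become precisely $a_t\otimes(A_tC_tdB_t) + (A_tC_tdB_t)\otimes a_t = a_t\otimes da_t + da_t\otimes a_t$, i.e. exactly the martingale part of $d(a_t^{\otimes 2})$ from the previous step. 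Hence these cross-terms cancel in $dP_t - d(a_t^{\otimes 2})$, leaving
\[
dA_t = \Big(\int_{\RR^d}(x-a_t)^{\otimes 3}\,\mu_t(dx)\Big)C_t dB_t - A_tC_t^2A_t\,dt ,
\]
as claimed. (Since $\mu_t$ is built piecewise across the finitely many times $T_k$ at which the support drops dimension, one applies this on each such interval; nothing changes at the transition times by continuity.)

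I expect the main obstacle to be purely bookkeeping: keeping the tensor slots consistent — which index of the third-order tensor is contracted against $C_t dB_t$, and verifying that the symmetric $a_t$--$A_t$ cross-terms produced by expanding $x^{\otimes 2}\otimes(x-a_t)$ match, slot for slot, the martingale terms of $d(a_t^{\otimes 2})$. There is no analytic subtlety, because $\mu$ is compactly supported so all integrands are bounded and the interchanges of $d$ and $\int$ are justified exactly as in the proof of Lemma \ref{lem:stochastic-localization-prelim}. (Alternatively, one could obtain the same result in one stroke by applying It\^o's formula to the functional $(\mu_t,a_t)\mapsto \int (x-a_t)^{\otimes 2}\mu_t(dx)$, but the decomposition $A_t = P_t - a_t^{\otimes2}$ makes the cancellation most transparent.)
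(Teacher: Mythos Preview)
Your proof is correct and uses the same fundamental decomposition $A_t = \int x^{\otimes 2}\mu_t(dx) - a_t^{\otimes 2}$ as the paper. The only difference is organizational: the paper computes $dA_t$ twice, once from this decomposition to read off the drift, and once by applying the It\^o product rule directly to $\int (x-a_t)^{\otimes 2}\mu_t(dx)$ (including the cross-variation terms $d[a_t,\mu_t(dx)]$) to read off the martingale part, invoking uniqueness of the Doob decomposition to match the two. You instead stay with the single decomposition and identify the martingale part by expanding $x=(x-a_t)+a_t$ inside $dP_t$, which is slightly more direct and avoids the second computation; either route is fine.
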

\begin{proof}
	We consider the Doob decomposition of $A_t = M_t + E_t$, where $M_t$ is a local martingale and $E_t$ is a process of bounded variation.
	By the previous two propositions and the definition of $A_t$, we have on one hand
	\begin{align*}
	dA_t &= d\int\limits_{\RR^d}x^{\otimes 2}\mu_t(dx) - da_t^{\otimes 2} =d\int\limits_{\RR^d}x^{\otimes 2}\mu_t(dx) -a_t\otimes da_t - da_t\otimes a_t- A_tC_t^2A_tdt.
	\end{align*}
	Clearly the first $3$ terms are local martingales, which shows, by the uniqueness of the Doob decomposition, $dE_t= -A_tC_t^2A_tdt$. On the other hand, one may also rewrite the above as
	\begin{align*}
	dA_t =& d\int\limits_{\RR^d}(x - a_t)^{\otimes 2}\mu_t(dx)  = \int\limits_{\RR^d}d\left((x - a_t)^{\otimes 2}\mu_t(dx)\right)\\
	     =& -\int\limits_{\RR^d}da_t\otimes (x-a_t)\mu_t(dx) - \int\limits_{\RR^d}(x-a_t)\otimes da_t\mu_t(dx) + \int\limits_{\RR^d}(x-a_t)^{\otimes 2} d\mu_t(dx)\\
	     &-2\int\limits_{\RR^d}(x-a_t)\otimes d[a_t,\mu_t(dx)]_t +\int\limits_{\RR^d}d[a_t,a_t]_t\mu_t(dx).
	\end{align*}
	Note that the first $2$ terms are equal to $0$, since, by definition of $a_t$,
	$$\int\limits_{\RR^d}da_t\otimes (x-a_t)\mu_t(dx) = da_t\otimes\int\limits_{\RR^d}(x-a_t)\mu_t(dx) = 0.$$
	Also, the last 2 terms are clearly of bounded variation, which shows
	$$dM_t = \int\limits_{\RR^d}(x-a_t)^{\otimes 2} d\mu_t(dx) = \int\limits_{\RR^d}(x-a_t)^{\otimes 3} C_t\mu_t(dx)dB_t.$$
\end{proof}
Define the stopping time $\tau = \inf\{t| A_t = 0\}$. Then, at time $\tau$, $\mu_\tau$ is just a delta mass located at $a_\tau$ and $\mu_s = \mu_\tau$ for every $s \geq \tau$. A crucial is observation is
\begin{proposition} \label{finite}
	Suppose that there exists constants $t_0 \geq 0$ and $c>0$ such that a.s. one of the following happens
	\begin{enumerate}
		\item for every $t_0 < t < \tau$, $\mathrm{Tr}\left(A_tC_t^2A_t\right) > c$,
		\item $\int\limits_0^{t_0}\lambda_{\min}\left(C_t^2\right)dt = \infty$, where $\lambda_{\min}\left(C_t^2\right)$ is the minimal eigenvalue of $C_t^2$,
	\end{enumerate}
	then $\tau$ is finite a.s. and in the second case $\tau \leq t_0$. Moreover, if $\tau$ is finite a.s. then  $a_\tau$ has the law of $\mu$.
\end{proposition}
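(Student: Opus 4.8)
The plan is to extract a finite ``energy'' budget from the drift identity in Proposition \ref{dAt} and then play it off against each of the two alternatives. First I would record the structural fact that $\{t \geq 0 : A_t = 0\}$ is exactly the closed interval $[\tau,\infty)$: once $A_t = 0$, the measure $\mu_t$ is a point mass, so $da_t = A_tC_tdB_t = 0$ and the construction freezes, forcing $A_s = 0$ for all $s \geq t$; together with the sample-path continuity of $A_t$ (it is an It\^o process by Proposition \ref{dAt}, and it varies continuously across the at most $d$ dimension-drop times of the recursive construction) this gives the claim, and in particular $\mathrm{Tr}(A_t) > 0$ for every $t < \tau$. Next, taking traces in Proposition \ref{dAt} and noting that the trace of the martingale term is a local martingale, the process
$$N_t := \mathrm{Tr}(A_t) + \int\limits_0^t \mathrm{Tr}\left(A_sC_s^2A_s\right)ds$$
is a continuous local martingale; since both of its summands are non-negative it is bounded below, hence a genuine supermartingale. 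Optional stopping at $t\wedge\tau$ followed by $t\to\infty$ (monotone convergence for the integral term, whose integrand is non-negative, and simply discarding $\mathrm{Tr}(A_{t\wedge\tau})\geq 0$) yields the energy bound
$$\EE\left[\int\limits_0^\tau \mathrm{Tr}\left(A_sC_s^2A_s\right)ds\right] \leq \mathrm{Tr}(A_0) < \infty,$$
so that $\int_0^\tau \mathrm{Tr}(A_sC_s^2A_s)ds < \infty$ almost surely.

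Both finiteness claims are then short contradictions. On the event where alternative (1) holds, if $\tau = \infty$ then $\int_0^\tau \mathrm{Tr}(A_sC_s^2A_s)ds \geq \int_{t_0}^\infty c\,ds = \infty$, which is impossible; hence $\tau < \infty$ there. On the event where alternative (2) holds, suppose $\tau > t_0$; then $\mathrm{Tr}(A_s) > 0$ on $[0,t_0]$, so by continuity $\delta := \min_{0 \leq s \leq t_0}\mathrm{Tr}(A_s) > 0$. Using $\mathrm{Tr}(A_sC_s^2A_s) \geq \lambda_{\min}(C_s^2)\,\mathrm{Tr}(A_s^2) \geq \frac{1}{d}\lambda_{\min}(C_s^2)\,\mathrm{Tr}(A_s)^2$ (the last step by Cauchy--Schwarz), we get $\int_0^\tau \mathrm{Tr}(A_sC_s^2A_s)ds \geq \frac{\delta^2}{d}\int_0^{t_0}\lambda_{\min}(C_s^2)ds = \infty$, again impossible; hence $\tau \leq t_0$ there. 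Since the two alternatives together exhaust a probability-one event, $\tau < \infty$ a.s.

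For the last assertion, fix a bounded continuous $\varphi : \RR^d \to \RR$. By property \ref{item:martingale} of Proposition \ref{prop:stochastic-localization}, $N^\varphi_t := \int_{\RR^d}\varphi(x)\mu_t(dx)$ is a martingale, and it is bounded by $\norm{\varphi}_\infty$; hence it is uniformly integrable and converges a.s. to some $N^\varphi_\infty$ with $\EE[N^\varphi_\infty] = N^\varphi_0 = \int_{\RR^d}\varphi\,d\mu$. Since $\tau < \infty$ a.s. and $\mu_t = \delta_{a_\tau}$ for all $t \geq \tau$, we have $N^\varphi_t = \varphi(a_\tau)$ for all large $t$, so $N^\varphi_\infty = \varphi(a_\tau)$ a.s. Therefore $\EE[\varphi(a_\tau)] = \int_{\RR^d}\varphi\,d\mu$ for every bounded continuous $\varphi$, i.e. $a_\tau \sim \mu$.

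I expect the one genuinely delicate point to be the promotion of $N_t$ from a local to a true supermartingale (which is precisely why the non-positive drift term $-A_tC_t^2A_t\,dt$ must be kept rather than discarded — the martingale term alone could be a strict local martingale because $C_t$ may blow up as $t\uparrow\tau$), together with verifying that the construction is defined continuously on all of $[0,\infty)$ so that $\mathrm{Tr}(A_s)$ really is continuous on $[0,t_0]$ and hence bounded below by $\delta$. Everything else is routine.
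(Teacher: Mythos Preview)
Your argument is correct and follows essentially the same route as the paper's proof: both exploit that $\mathrm{Tr}(A_t)+\int_0^t\mathrm{Tr}(A_sC_s^2A_s)\,ds$ is a nonnegative local martingale, hence a supermartingale, to force $\int_0^\tau\mathrm{Tr}(A_sC_s^2A_s)\,ds<\infty$ a.s.\ and then derive a contradiction in each case. The only cosmetic differences are that in case (2) the paper bounds $\mathrm{Tr}(A_sC_s^2A_s)$ below via a single unit eigenvector $v_s$ of $A_s$ (getting $\langle A_sv_s,C_s^2A_sv_s\rangle\geq a^2\lambda_{\min}(C_s^2)$) rather than via your trace/Cauchy--Schwarz inequality, and for the law of $a_\tau$ the paper invokes optional stopping directly where you pass through uniform integrability---both immaterial variations.
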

\begin{proof}
	 Consider the process $R_t = A_t + \int\limits_{0}^tA_sC_s^2A_sds$. For the first case, the previous proposition shows that the real-valued process $\mathrm{Tr}\left(R_t\right)$ a positive local martingale; hence, a super-martingale. By the martingale convergence theorem $\mathrm{Tr}\left(R_t\right)$ converges to a limit almost surely. By our assumption, if $\tau = \infty$ then
	$$\int\limits_{0}^\infty\mathrm{Tr}(A_tC_t^2A_t)dt \geq  \int\limits_{t_0}^\infty\mathrm{Tr}(A_tC_t^2A_t)dt \geq\int\limits_{t_0}^\infty cdt = \infty.$$
    This would imply that $\lim\limits_{t \to \infty}\mathrm{Tr}(A_t) = -\infty$ which clearly cannot happen.
	\\
	\\
	For the second case, under the event $\{\tau > t_0\}$, by continuity of the process $A_t$ there exists $a > 0$ such that for every $t \in [0, t_0]$, there is a unit vector $v_t \in \RR^d$ for which
	$\inner{v_t}{A_tv_t} \geq a$.
	We then have,
	$$\int\limits_{0}^{t_0}\mathrm{Tr}(A_tC_t^2A_t)dt \geq \int\limits_{0}^{t_0}\inner{A_tv_t}{C_t^2A_tv_t}dt\geq a^2\int\limits_{0}^{t_0}\lambda_{\min}(C_t^2)dt  = \infty,$$
	which implies $\lim\limits_{t \to t_0}\mathrm{Tr}(A_t) = -\infty$. Again, this cannot happen and so $\PP(\tau > t_0) = 0$.
	\\
	\\
	To understand the law of $a_\tau$, let $\vphi: \RR^d \to \RR$ be any continuous bounded function. By Property \ref{item:martingale} of Proposition \ref{prop:stochastic-localization} $\int\limits_{\RR^d} \vphi(x)\mu_t(dx)$ is a martingale. We claim that it is bounded. Indeed, observe that since $\mu_t$ is a probability measure for every $t$, then $$\int\limits_{\RR^d} \vphi(x)\mu_t(dx) \leq \max\limits_x|\vphi(x)|.$$ $\tau$ is finite a.s., so by the optional stopping theorem for continuous time martingales (\cite{oksendal2003stochastic} Theorem 7.2.4)
	$$\EE\left[\int\limits_{\RR^d} \vphi(x)\mu_\tau(dx)\right] = \int\limits_{\RR^d} \vphi(x)\mu(dx).$$
	Since $\mu_\tau$ is a delta mass, we have that $\int\limits_{\RR^d} \vphi(x)\mu_\tau(dx) =\ \vphi(a_\tau)$ which finishes the proof.
\end{proof}
We finish the section with an important property of the process $A_t$.
\begin{proposition} \label{lem: decreaing rank}
	The rank of $A_t$ is monotonic decreasing in $t$, and $\ker(A_t) \subset \ker(A_s)$ for $t \leq s$.
\end{proposition}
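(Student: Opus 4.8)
The plan is to reduce the statement to a monotonicity property of the supports of the measures $\mu_t$. First I would record the elementary identity
$$\ker(A_t) = V_t^{\perp}, \qquad V_t := \mathrm{span}\{x - y \ : \ x,y \in \mathrm{supp}(\mu_t)\},$$
that is, $\ker(A_t)$ is exactly the orthogonal complement of the direction space of the affine hull of $\mathrm{supp}(\mu_t)$; this holds because $\langle v, A_t v\rangle = \int_{\RR^d} \langle v, x - a_t\rangle^2 \mu_t(dx)$ vanishes precisely when $x \mapsto \langle v, x\rangle$ is constant on $\mathrm{supp}(\mu_t)$. In particular $\mathrm{rank}(A_t) = \dim V_t$ is the dimension of the affine hull of $\mathrm{supp}(\mu_t)$. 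Hence it is enough to show that $t \mapsto \mathrm{supp}(\mu_t)$ is non-increasing: if $\mathrm{supp}(\mu_s) \subseteq \mathrm{supp}(\mu_t)$ whenever $t \le s$, then the affine hulls are nested, so $V_s \subseteq V_t$, and therefore $\ker(A_t) = V_t^{\perp} \subseteq V_s^{\perp} = \ker(A_s)$ and $\mathrm{rank}(A_s) \le \mathrm{rank}(A_t)$, which is exactly the assertion.

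To prove that the supports are non-increasing I would use the stage structure of the construction. Recall from the proof of Proposition~\ref{prop:stochastic-localization} that there are stopping times $0 = T_0 \le T_1 \le \cdots \le T_m = \tau$ (with $m \le d$) such that on each interval $[T_{k-1}, T_k)$ the process is governed by Lemma~\ref{lem:stochastic-localization-prelim} applied with $\mu_{T_{k-1}}$, viewed on the affine hull $L_{k-1}$ of its support, in the role of $\mu$. On that interval $\mu_t(dx) = F_t(x)\,\tilde{\mu}(dx)$, where $\tilde{\mu}$ is a fixed measure equivalent to $\mu_{T_{k-1}}$ and, as in the proof of Lemma~\ref{lem:stochastic-localization-prelim}, $F_t$ is a normalized Gaussian-type density that is strictly positive everywhere (because $\tilde{\Sigma}_t \succ 0$ for $t < T_k$). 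Hence $\mu_t$ is mutually absolutely continuous with $\mu_{T_{k-1}}$ throughout $[T_{k-1}, T_k)$, so $\mathrm{supp}(\mu_t) = \mathrm{supp}(\mu_{T_{k-1}})$ is constant there. Moreover, again by Lemma~\ref{lem:stochastic-localization-prelim}, $\mu_{T_k}$ has a density (on $L_k$) proportional to a conditional density of $\tilde{\mu}$ times a Gaussian density, so $\mu_{T_k}$ is absolutely continuous with respect to (the conditional of) $\tilde{\mu}$ and therefore $\mathrm{supp}(\mu_{T_k}) \subseteq \mathrm{supp}(\tilde{\mu}) = \mathrm{supp}(\mu_{T_{k-1}})$. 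Chaining these facts over the stages gives $\mathrm{supp}(\mu_s) \subseteq \mathrm{supp}(\mu_t)$ for all $t \le s$ (the boundary case $s \ge \tau$, where $\mu_\tau$ is a point mass and $A_\tau = 0$, is immediate), completing the proof.

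The genuinely routine ingredients are the formula for $\ker(A_t)$ and the strict positivity of $F_t$ within each stage. The only point that requires care is the behaviour at the stopping times $T_k$: one must verify that the affine hull drops to a \emph{nested} subspace, i.e. $\mathrm{supp}(\mu_{T_k}) \subseteq \mathrm{supp}(\mu_{T_{k-1}})$, rather than merely to some subspace of smaller dimension --- and this is precisely what the explicit description of $\mu_{T_k}$ in Lemma~\ref{lem:stochastic-localization-prelim} (a conditional density of $\tilde{\mu}$ times a Gaussian density) supplies.
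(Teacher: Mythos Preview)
Your argument is correct, but it takes a genuinely different route from the paper's. The paper argues entirely at the level of the SDE for $A_t$: for any $v_0$ with $A_{t_0}v_0=0$, the process $\langle v_0,A_t v_0\rangle+\int_0^t\langle v_0,A_sC_s^2A_sv_0\rangle\,ds$ is, by Proposition~\ref{dAt}, a nonnegative local martingale, hence a supermartingale; since it equals $0$ at $t_0$, it stays $0$, so $\langle v_0,A_tv_0\rangle=0$ for all $t\ge t_0$. That is the whole proof.

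Your approach instead unpacks the explicit stage structure of the construction to show $\mathrm{supp}(\mu_s)\subseteq\mathrm{supp}(\mu_t)$ for $t\le s$, and then translates this into the kernel statement via the identity $\ker(A_t)=V_t^{\perp}$. The trade-offs: the paper's supermartingale argument is short, uses only the formula for $dA_t$, and would apply verbatim to any matrix process satisfying an SDE of that shape; it never needs to look inside the construction. Your argument is more geometric and perhaps more intuitive, yielding the stronger intermediate fact that the supports themselves are nested, but it leans on the specifics of Lemma~\ref{lem:stochastic-localization-prelim} (the strict positivity of $F_t$ within a stage and the description of $\mu_{T_k}$ as a Gaussian reweighting of a slice of $\tilde{\mu}$). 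Both are valid; the paper's is the more economical and more robust of the two.
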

\begin{proof}
		To see that $\mathrm{rank}(A_t)$ is indeed monotonic decreasing, let $v_0$ be such that $A_{t_0}v_0 = 0$ for some $t_0 > 0$, we will show that for any $t \geq t_0$, $A_{t}v_0 = 0$. In a similar fashion to Proposition \ref{finite}, we define the process $\inner{v_0}{A_tv_0} + \int\limits_0^t\inner{v_0}{A_sC_s^2A_sv_0}ds$, which is, using Proposition \ref{dAt}, a positive local martingale and so a super-martingale. This then implies that $\inner{v_0}{A_tv_0}$ is itself a positive super-martingale. Since $\inner{v_0}{A_{t_0}v_0} = 0$, we have that for any $t \geq t_0$, $\inner{v_0}{A_{t}v_0} = 0$ as well.\\
\end{proof}
\section{Convergence rates in transportation distance}
\subsection{The case of bounded random vectors: proof of Theorem \ref{thm:w2-bounded}}
In this subsection we fix a measure $\mu$ on $\RR^d$ and a random vector $X \sim \mu$ with the assumption that $\norm{X} \leq \beta$ almost surely for some $\beta >0$. We also assume that $\EE\left[X\right] = 0$.\\

We define the martingale process $a_t$ along with the stopping time $\tau$ as in Section \ref{construction}, where we take $C_t = A_t^\dagger$, so that $a_t = \int\limits_0^tA_sA_s^\dagger dB_s$. We denote $P_t:=A_tA_t^\dagger$, and remark that since $A_t$ is symmetric, $P_t$ is a projection matrix. As such, we have that for any $t < \tau$, $\mathrm{Tr}\left(P_t\right) \geq 1$. By Proposition \ref{finite}, $a_\tau$ has the law $\mu$.\\
\\
In light of the remark following Theorem \ref{main}, our first objective is to understand the expectation of $\tau$.
\begin{lemma} \label{boundedExpectation}
	Under the boundedness assumption $\norm{X} \leq \beta$, we have $\EE\left[\tau\right]\leq \beta^2$.
\end{lemma}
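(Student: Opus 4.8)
The plan is to track the trace of the covariance process $A_t$ and show that it decreases at rate at least $1$ until time $\tau$, which forces $\tau$ to be small. Since we have chosen $C_t = A_t^\dagger$, the drift term appearing in Proposition \ref{dAt} becomes $A_t C_t^2 A_t = A_t (A_t^\dagger)^2 A_t = P_t$, the orthogonal projection onto the image of $A_t$ (this is immediate from diagonalizing the symmetric positive semidefinite matrix $A_t$). Hence, setting $R_t := A_t + \int_0^t A_s C_s^2 A_s\, ds = A_t + \int_0^t P_s\, ds$ as in the proof of Proposition \ref{finite}, the process $\mathrm{Tr}(R_t)$ is a positive local martingale, and therefore a supermartingale.

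First I would apply optional stopping to the supermartingale $\mathrm{Tr}(R_t)$ at the bounded stopping time $t \wedge \tau$, obtaining
$$\EE\big[\mathrm{Tr}(A_{t\wedge\tau})\big] + \EE\Big[\int_0^{t\wedge\tau}\mathrm{Tr}(P_s)\,ds\Big] = \EE\big[\mathrm{Tr}(R_{t\wedge\tau})\big] \leq \mathrm{Tr}(R_0) = \mathrm{Tr}(A_0).$$
Next, I would use two elementary facts: $\mathrm{Tr}(A_{t\wedge\tau}) \geq 0$ since $A_t \succeq 0$, and $\mathrm{Tr}(P_s) \geq 1$ for every $s < \tau$, since $P_s$ is the orthogonal projection onto $\mathrm{Image}(A_s)$, which is nonzero whenever $A_s \neq 0$. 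Combining these gives $\EE[t\wedge\tau] \leq \mathrm{Tr}(A_0)$, and letting $t \to \infty$ via monotone convergence yields $\EE[\tau] \leq \mathrm{Tr}(A_0)$ (in particular $\tau < \infty$ almost surely, consistent with Proposition \ref{finite}).

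Finally, I would identify $\mathrm{Tr}(A_0)$. Since $a_0 = \EE[X] = 0$, we have $A_0 = \int_{\RR^d} x^{\otimes 2}\,\mu(dx) = \Cov(X) = \Sigma$, so $\mathrm{Tr}(A_0) = \EE\norm{X}^2 \leq \beta^2$ by the boundedness assumption, which completes the proof.

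I do not expect a serious obstacle here; the only points requiring care are that $R_t$ is merely a \emph{local} martingale (handled by the standard fact that a positive local martingale is a supermartingale, so that optional stopping still yields the desired inequality) and the lower bound $\mathrm{Tr}(P_s) \geq 1$ on the event $\{s < \tau\}$ — this is precisely where the normalization $C_t = A_t^\dagger$ enters, guaranteeing that the drift of $\mathrm{Tr}(A_t)$ is a projection of rank at least $1$ rather than a quantity that could degenerate strictly before $A_t$ itself vanishes.
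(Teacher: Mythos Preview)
Your proof is correct, but it takes a different route from the paper's. The paper tracks the \emph{mean} process: setting $H_t = \norm{a_t}^2$, It\^o's formula gives $dH_t = 2\langle a_t, P_t\,dB_t\rangle + \mathrm{Tr}(P_t)\,dt$, so $\EE[H_\infty] - \EE[H_0] = \int_0^\infty \EE[\mathrm{rank}(P_t)]\,dt \geq \int_0^\infty \PP(\tau > t)\,dt = \EE[\tau]$, and the left side is at most $\EE\norm{X}^2 \leq \beta^2$ since $a_\tau \sim \mu$. You instead track the \emph{covariance} process, using that $\mathrm{Tr}(R_t) = \mathrm{Tr}(A_t) + \int_0^t \mathrm{Tr}(P_s)\,ds$ is a positive supermartingale starting from $\mathrm{Tr}(A_0) = \EE\norm{X}^2$.

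The two arguments are essentially mirror images: in both, the drift is $\mathrm{Tr}(P_t) \geq \mathbf{1}_{\{t<\tau\}}$, and the total variation available is $\EE\norm{X}^2$ --- the paper sees this as the terminal value of $\EE\norm{a_t}^2$, you see it as the initial value of $\mathrm{Tr}(A_t)$. Your version has the minor advantage of making the supermartingale/optional-stopping structure explicit (the paper's computation $\frac{d}{dt}\EE[H_t] = \EE[\mathrm{rank}(P_t)]$ tacitly uses that the local martingale part is a true martingale, which follows from $\norm{a_t}\leq\beta$ a.s.\ but is not spelled out). Both yield the slightly sharper statement $\EE[\tau] \leq \EE\norm{X}^2$, with the a.s.\ bound $\norm{X}\leq\beta$ entering only at the last step.
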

\begin{proof}
	Let $H_t = \norm{a_t}^2$. By It\^o's formula and since $P_t$ is a projection matrix,
	 $$dH_t = 2\langle a_t,P_tdB_t\rangle + \mathrm{Tr}\left(P_t\right)dt = 2\langle a_t,P_tdB_t\rangle + \text{rank}\left(P_t\right)dt.$$
	 So, $\frac{d}{dt}\EE\left[H_t\right] = \EE\left[\text{rank}\left(P_t\right)\right]$. Since $\EE\left[H_\infty\right]\leq \beta^2$,
	 $$\beta^2 \geq \EE\left[H_\infty\right]-\EE[H_0] = \int\limits_0^\infty\EE\left[\text{rank}\left(P_t\right)\right]dt\geq \int\limits_0^\infty\PP\left(\tau > t\right)dt = \EE\left[\tau\right].$$
\end{proof}
The above claim gives bounds on the expectation of $\tau$, however in order to use Theorem \ref{main}, we need bounds for its tail behaviour in the sense of \eqref{sub-exponential}. To this end, we can use a bootstrap argument and invoke the above lemma with the measure $\mu_t$ in place of $\mu$, recalling that $X_\infty|\mathcal{F}_t \sim \mu_t$ and noting that $\norm{X_\infty|\mathcal{F}_t} \leq \beta$ almost surely. Therefore, we can consider the conditioned stopping time $\tau|\mathcal{F}_t - t$ and get that
$$\EE\left[\tau|\mathcal{F}_t\right] \leq t + \beta^2.$$
The following lemma will make this precise.
\begin{lemma} \label{lem:exp tails for bdd}
	Suppose that, for the stopping time $\tau$, it holds that for every $ t > 0$, $\EE\left[\tau|\mathcal{F}_t\right] \leq t + \beta^2$ a.s., then
\begin{equation}\label{eq:exptails}
	\forall i\in \NN,\ \ \PP\left(\tau \geq i\cdot 2\beta^2\right) \leq \frac{1}{2^i}.
\end{equation}
\end{lemma}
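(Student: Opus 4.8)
The plan is to run a Markov-type argument iteratively, using the hypothesis $\EE[\tau \mid \mathcal{F}_t] \leq t + \beta^2$ as a renewal-style bound on the residual lifetime. First I would establish the base case via Markov's inequality: since $\EE[\tau] = \EE[\tau \mid \mathcal{F}_0] \leq \beta^2$, we get $\PP(\tau \geq 2\beta^2) \leq \tfrac{1}{2}$, which is \eqref{eq:exptails} for $i = 1$. For the inductive step, I would condition on the event $\{\tau \geq (i-1)\cdot 2\beta^2\}$ together with the $\sigma$-algebra $\mathcal{F}_{(i-1)\cdot 2\beta^2}$. On this event, applying the hypothesis at time $t = (i-1)\cdot 2\beta^2$ gives $\EE[\tau \mid \mathcal{F}_{(i-1)\cdot 2\beta^2}] \leq (i-1)\cdot 2\beta^2 + \beta^2$, so the conditional expectation of the overshoot $\tau - (i-1)\cdot 2\beta^2$ is at most $\beta^2$. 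A conditional Markov inequality then yields
\[
\PP\left(\tau \geq i\cdot 2\beta^2 \;\middle|\; \mathcal{F}_{(i-1)\cdot 2\beta^2}\right) \leq \frac{1}{2} \quad \text{on the event } \{\tau \geq (i-1)\cdot 2\beta^2\}.
\]
Taking expectations and using that $\{\tau \geq (i-1)\cdot 2\beta^2\} \in \mathcal{F}_{(i-1)\cdot 2\beta^2}$ (since $\tau$ is a stopping time) multiplies the bound from the previous step by $\tfrac12$, giving $\PP(\tau \geq i\cdot 2\beta^2) \leq 2^{-i}$.

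The one point that needs a little care — and which I expect to be the main (though minor) obstacle — is making the conditional Markov step rigorous: one must argue that $\EE[(\tau - (i-1)\cdot 2\beta^2)\mathbf{1}_{\{\tau \geq (i-1)\cdot 2\beta^2\}} \mid \mathcal{F}_{(i-1)\cdot 2\beta^2}] \leq \beta^2 \mathbf{1}_{\{\tau \geq (i-1)\cdot 2\beta^2\}}$, which follows because on the complementary event $\tau < (i-1)\cdot 2\beta^2$ and there the indicator vanishes, while on the event itself the hypothesis applies directly. Once this conditional bound is in hand, the inequality $\PP(\tau \geq i \cdot 2\beta^2 \mid \mathcal{F}_{(i-1)\cdot 2\beta^2}) \cdot \beta^2 \leq \EE[(\tau - (i-1)\cdot 2\beta^2)\mathbf{1}_{\{\tau \geq i \cdot 2\beta^2\}} \mid \mathcal{F}_{(i-1)\cdot 2\beta^2}]$ closes the induction after taking expectations. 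I would present the argument as a clean induction on $i$, with the base case and inductive step as described, and note that the stopping-time property of $\tau$ is what guarantees the relevant events are $\mathcal{F}_{(i-1)\cdot 2\beta^2}$-measurable so that the tower property applies.
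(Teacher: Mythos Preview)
Your proposal is correct and follows essentially the same route as the paper: iterate a conditional Markov inequality at the times $t_i = i\cdot 2\beta^2$, using the hypothesis $\EE[\tau\mid\mathcal{F}_{t_{i-1}}]\le t_{i-1}+\beta^2$ to get $\PP(\tau\ge t_i\mid\mathcal{F}_{t_{i-1}})\le\tfrac12$ on $\{\tau\ge t_{i-1}\}$, then take expectations and induct. One small slip to fix in your final display: the left side should carry a factor $2\beta^2$ (since $\tau-t_{i-1}\ge 2\beta^2$ on $\{\tau\ge t_i\}$), not $\beta^2$; and note that the paper phrases the conditioning via the Markov property of $\mu_t$, whereas your use of the stopping-time measurability of $\{\tau\ge t_{i-1}\}$ is cleaner and matches the lemma's stated hypotheses exactly.
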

\begin{proof}
	Denote $t_i = i\cdot 2\beta^2$. Since $\mu_t$ is Markovian, and by the law of total probability, for any $ i \in \NN$ we have the relation
	$$ \PP\left(\tau \geq t_{i+1}\right) \leq \PP\left(\tau > t_{i}\right)\mathrm{ess}\sup\limits_{\mu_{t_i}}\left(\PP\left(\tau - t_i \geq 2\beta^2|\mathcal{F}_{t_{i}}\right)\right),$$
	where the essential supremum is taken over all possible states of $\mu_{t_i}$. Using Markov's inequality, we almost surely have
	$$\PP\left(\tau - t_i \geq 2\beta^2|\mathcal{F}_{t_{i}}\right) \leq \frac{\EE\left[\tau - t_i|\mathcal{F}_{t_i}\right]}{2\beta^2} \leq \frac{1}{2},$$
	which is also true for the essential supremum.
	Clearly $\PP\left(\tau\geq 0\right) = 1$ which finishes the proof.
\end{proof}

\begin{proof}[Proof of Theorem \ref{thm:w2-bounded}]
Our objective is to apply Theorem \ref{main}, defining $X_t = a_t$ and $\Gamma_t = P_t$ so that $(X_t, \Gamma_t, \tau)$ becomes a martingale embedding according to Proposition \ref{finite}. In this case, we have that $\Gamma_t$ is a projection matrix almost surely. Thus,
$$\mathrm{Tr}\left(\EE[\Gamma_t^4]\EE\left[\Gamma_t^2\right]^\dagger\right) \leq d,$$
and
$$\mathrm{Tr}\left(\EE[\Gamma_t^2]\right) \leq d\PP\left(\tau > t\right).$$
Therefore, if $G$ and $S_n$ are defined as in Theorem \ref{main}, then
\begin{align*}
\mathcal{W}_2^2\left(S_n,G\right)&\leq \int\limits_{0}^{2\beta^2\log_2(n)}\frac{d}{n}dt + \int\limits_{2\beta^2\log_2(n)}^\infty 4d\PP(\tau > t)dt \\
&\leq \frac{2d\beta^2\log_2(n)}{n} + 4d\int\limits_{2\beta^2\log_2(n)}^\infty\PP\left(\tau > \left\lfloor\frac{t}{2\beta^2}\right\rfloor 2\beta^2\right)dt\\
& \stackrel{\eqref{eq:exptails}}{\leq} \frac{2d\beta^2\log_2(n)}{n} + 4d\int\limits_{2\beta^2\log_2(n)}^\infty\left(\frac{1}{2}\right)^{\left\lfloor\frac{t}{2\beta^2}\right\rfloor}dt\\
&\leq \frac{2d\beta^2\log_2(n)}{n} + 8d\beta^2\sum\limits_{j=\left\lfloor\log_2(n)\right\rfloor}^\infty \frac{1}{2^j} \leq \frac{2d\beta^2\log_2(n)}{n} + \frac{32d\beta^2}{n}.
\end{align*}
Taking square roots, we finally have
  \[ \mathcal{W}_2(S_n, G) \leq \frac{\beta\sqrt{d}\sqrt{32+2\log_2(n)}}{\sqrt{n}}, \]
as required.
\end{proof}
\subsection{The case of log-concave vectors: proof of Theorem \ref{thm:w2-log-concave}}
In this section we fix $\mu$ to be an isotropic log concave measure. The processes $a_t = a^\mu_t, A_t = A_t^\mu$ are defined as in Section \ref{construction} along with the stopping time $\tau$. To define the matrix process $C_t$, we first define a new stopping time $$T:= 1\wedge\inf\{t|\norm{A_t}_{op} \geq 2\}.$$
 $C_t$ is then defined in the following manner:
\[
C_t =
\begin{cases}
\mathrm{min}(A_t^\dagger, \mathrm{I}_d) & \text{if $t \leq T$} \\
A_t^\dagger & \text{otherwise}
\end{cases}
\]
where, again, $A_t^\dagger$ denotes the pseudo-inverse of $A_t$ and $\mathrm{min}(A_t^\dagger, \mathrm{I}_d)$ is the unique matrix which is diagonalizable with respect to the same basis as $A_t^\dagger$ and such that each of its eigenvalues corresponds to an an eigenvalue of  $A_t^\dagger$ truncated at $1$. Since $\mathrm{Tr}\left(A_tA_t^\dagger\right) \geq 1$ whenever $t \leq \tau$, then the conditions of Proposition \ref{finite} are clearly met for $t_0 = 1$ and $a_\tau$ has the law of $\mu$.

In order to use Theorem \ref{main}, we will also need to demonstrate that $\tau$ has subexponential tails in the sense of \eqref{sub-exponential}. For this, we first relate $\tau$ to the stopping time $T$.
\begin{lemma}
  $\tau< 1 + \frac{4}{T}$.
\end{lemma}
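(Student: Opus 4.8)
The plan is to control the random covariance process $A_t$ by comparing it with the covariance process $\Sigma_t$ furnished by Proposition \ref{prop:sigma-evolution}: recall that $\mu_t$ is absolutely continuous with respect to a Gaussian $\gamma_t$ of covariance $\Sigma_t$ with $\frac{d\mu_t}{d\gamma_t}(x)\propto\rho(x)$, and that (since $\lim_{t\to0^+}\Sigma_t^{-1}=0$ and $\frac{d}{dt}\Sigma_t=-\Sigma_tC_t^2\Sigma_t$) one has $\Sigma_t^{-1}=\int_0^tC_s^2\,ds$. First I would dispense with the trivial case $\tau\le T$, which is immediate since $T\le 1$, and reduce to the situation in which $A_t$ stays non-degenerate on $[0,\tau)$; the degenerate case is handled by carrying out the whole argument inside the decreasing family of affine subspaces spanned by $\mathrm{supp}(\mu_t)$, exactly as in the construction underlying Proposition \ref{prop:stochastic-localization}.

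The first substantive step is the pathwise domination $A_t\preceq\Sigma_t$ for all $t>0$. Indeed, the Lebesgue density of $\mu_t$ is proportional to $\rho(x)$ times a Gaussian density of covariance $\Sigma_t$, so its negative logarithm has Hessian $\succeq\Sigma_t^{-1}$ (using $-\nabla^2\log\rho\succeq0$), and the Brascamp--Lieb inequality gives $\mathrm{Cov}(\mu_t)=A_t\preceq\Sigma_t$. The second step is a lower bound for $\Sigma_T^{-1}$: by the definition of $T$, for every $s<T$ we have $\norm{A_s}_{op}\le 3$, so every non-zero eigenvalue of $A_s^\dagger$ is at least $\tfrac13$ and hence $C_s=\mathrm{min}(A_s^\dagger,\Id)\succeq\tfrac13\Id$; integrating yields $\Sigma_T^{-1}=\int_0^TC_s^2\,ds\succeq\frac{T}{9}\Id$.

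The final step converts this into finite-time collapse. For $t\in(T,\tau)$ we have $C_t=A_t^\dagger=A_t^{-1}$, so using the domination $A_t\preceq\Sigma_t$,
\[ \frac{d}{dt}\Sigma_t^{-1}=C_t^2=A_t^{-2}\succeq\norm{A_t}_{op}^{-2}\Id\succeq\norm{\Sigma_t}_{op}^{-2}\Id=\lambda_{\min}(\Sigma_t^{-1})^2\,\Id . \]
Thus $g(t):=\lambda_{\min}(\Sigma_t^{-1})$ satisfies the differential inequality $g'\ge g^2$ with $g(T)\ge T/9$, so $g(t)$ --- and therefore $1/\norm{\Sigma_t}_{op}$ --- blows up no later than $t=T+\frac9T$; equivalently $\norm{\Sigma_t}_{op}\le\frac9T-(t-T)$ on $[T,T+\frac9T)$, which forces $\Sigma_t\to0$. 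Since $0\preceq A_t\preceq\Sigma_t$ this means $A_t\to0$, i.e. $\tau\le T+\frac9T\le 1+\frac9T$ because $T\le 1$. I expect the main friction to lie not in any single inequality but in the bookkeeping around possible degeneracy of $A_t$ --- making the reduction to a non-degenerate subspace genuinely clean and compatible with the global definition of $T$ --- and in promoting the eigenvalue differential inequality $g'\ge g^2$ to a rigorous statement, since $g=\lambda_{\min}(\Sigma_t^{-1})$ is only Lipschitz; the latter is a standard one-sided (Dini-derivative) comparison argument using Weyl's inequality.
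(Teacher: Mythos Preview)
Your proof is correct and follows essentially the same route as the paper: the Brascamp--Lieb domination $A_t\preceq\Sigma_t$, the bound $\norm{\Sigma_T}_{op}\le 9/T$ coming from $C_s\succeq\tfrac13\Id$ on $[0,T]$, and then collapse of $\Sigma_t$ (hence of $A_t$) by time $T+9/T$ using $C_t=A_t^{-1}$ afterwards. The only real difference is bookkeeping: you work with $\Sigma_t^{-1}=\int_0^tC_s^2\,ds$ and a scalar comparison for $g=\lambda_{\min}(\Sigma_t^{-1})$, whereas the paper works with $\Sigma_t$ directly and Gronwall; your formulation sidesteps any non-commutativity worries in passing from $A_t\preceq\Sigma_t$ to the matrix inequality $\frac{d}{dt}\Sigma_t\preceq-\Id$, which is a small advantage.
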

\begin{proof}
  Let $\Sigma_t$ be as in Proposition
  \ref{prop:sigma-evolution}. As described in the proposition, $\mu_t$ is
  proportional to $\mu$ times a Gaussian of covariance
  $\Sigma_t$, on an appropriate affine subspace. In this case, an application of the Brascamp-Lieb inequality (see
  \cite{harge2004convex} for details) shows that $A_t =
  \mathrm{Cov}(\mu_t) \preceq \Sigma_t$. In particular, this means
  that for $t > T$, when restricted to the orthogonal complement of $\ker(A_t)$, the following inequality holds,
  \[ \frac{d}{dt} \Sigma_t = -\Sigma_t C_t^2 \Sigma_t \preceq -\Id. \]
  So, $\tau \le T + \norm{\Sigma_T}_{op}$.

  It remains to estimate $\norm{\Sigma_T}_{op}$. To this end, recall
  that for $0 < t \le T$, we have $\norm{A_t}_{op} \leq 2$, which implies
  \[ \frac{d}{dt} \Sigma_t = -\Sigma_tC_t^2\Sigma_t  \preceq -\frac{1}{4} \Sigma_t^2. \]
  Now, consider the differential equation $f'(t) = -\frac{1}{4}f(t)^2$
  with $f(T) = \norm{\Sigma_T}_{op}$, which has solution $f(t) =
  \frac{4}{t - T + \frac{4}{\norm{\Sigma_T}_{op}}}$. By Gronwall's
  inequality, $f(t)$ lower bounds $\norm{\Sigma_t}_{op}$ for $0 < t
  \le T$, and so, in particular, $f(t)$ must remain finite within
  that interval. Consequently, we have
  \[ \frac{4}{\norm{\Sigma_T}_{op}} > T \implies \norm{\Sigma_T}_{op} < \frac{4}{T}. \]
  We conclude that
  \[ \tau \le T + \norm{\Sigma_T}_{op} < 1 + \frac{4}{T}, \]
  as desired.
\end{proof}

\begin{lemma} \label{expobound}
	There exist universal constants $c,C>0$ such that if $s > C\cdot \kappa_d^2\ln(d)^2$ and $d \geq 8$ then $$\PP(\tau > s)\leq e^{-cs},$$
	where $\kappa_d$ is the constant defined in \eqref{kappa_d}.
\end{lemma}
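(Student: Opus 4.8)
\textbf{Proof plan for Lemma \ref{expobound}.}
The goal is to show that $\tau$ has subexponential tails, and by the previous lemma it suffices to show that $T = 1 \wedge \inf\{t : \norm{A_t}_{op} \ge 3\}$ stays bounded away from $0$ with overwhelming probability; more precisely, since $\tau \le 1 + 9/T$, the event $\{\tau > s\}$ is contained in $\{T < 9/(s-1)\}$, so I need a bound of the form $\PP(T < \epsilon) \le e^{-c/\epsilon}$ for small $\epsilon$. The plan is to control $\norm{A_t}_{op}$ from above on the time interval $[0, \epsilon]$ by exhibiting, for each fixed unit vector $v$, that $\inner{v}{A_t v}$ is a supermartingale (this follows from Proposition \ref{dAt}, exactly as in the proof of Proposition \ref{lem: decreaing rank}, since $da_t$'s quadratic variation contributes the negative drift $-A_t C_t^2 A_t \preceq 0$) with a controllable martingale part, and then take a union bound / covering-net argument over the sphere, or better, directly control $\norm{A_t}_{op}$ via the trace or via a matrix martingale inequality.

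The key quantitative input must be the fluctuation bound for the martingale part of $A_t$, which from Proposition \ref{dAt} has quadratic variation governed by the third moment tensor $\int (x - a_t)^{\otimes 3} \mu_t(dx)$ contracted with $C_t$. Since $\mu_t$ has density proportional to $\mu$ times a Gaussian (Proposition \ref{prop:sigma-evolution}), it is log-concave, and for $t \le T$ we have $\norm{A_t}_{op} \le 3$ and $C_t = \min(A_t^\dagger, \Id) \preceq \Id$; hence one can bound the relevant third-moment quantities in terms of $\kappa_d$ by the very definition \eqref{kappa_d} (after normalizing $\mu_t$ to isotropic position, which costs a bounded factor because the covariance is between a constant multiple of the identity and $3\,\Id$ — this is where the constraint $t \le T$ is essential). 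This should yield that over a time interval of length $\epsilon$, the martingale part of each scalar process $\inner{v}{A_t v}$ has total variance $O(\epsilon \kappa_d^2)$, with the drift only helping; a Bernstein/Azuma-type inequality for continuous martingales with a crude union bound over an exponential-size net of the sphere then gives $\PP\big(\sup_{t \le \epsilon} \norm{A_t}_{op} \ge 3\big) \le \exp(d \cdot O(1)) \exp(-c/(\epsilon \kappa_d^2))$, which is small once $\epsilon \lesssim 1/(\kappa_d^2 \ln d)$ — wait, the stated threshold is $\kappa_d^2 \ln(d)^2$, so the union bound over the net (contributing $e^{O(d \ln d)}$ or similar, or rather a more careful $\ln d$-type entropy count) must be absorbed, which pins down the $\ln(d)^2$ factor. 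Translating back via $\tau \le 1 + 9/T$ then gives $\PP(\tau > s) \le \exp(-c s)$ for $s > C \kappa_d^2 \ln(d)^2$.

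The main obstacle I anticipate is making the fluctuation estimate for $\norm{A_t}_{op}$ genuinely uniform over the sphere with the right logarithmic dependence: a naive $\epsilon$-net on $\Sph$ has cardinality $e^{O(d)}$, which would force $\epsilon \lesssim 1/(d \kappa_d^2)$ and destroy the claimed bound. The fix is presumably to bound $\norm{A_t}_{op}$ not via a net but via a scalar quantity like $\mathrm{Tr}(A_t^p)^{1/p}$ for $p \asymp \ln d$ (so that $\norm{A_t}_{op} \le \mathrm{Tr}(A_t^p)^{1/p} \le d^{1/p} \norm{A_t}_{op} = O(\norm{A_t}_{op})$), compute the Itô drift of $\mathrm{Tr}(A_t^p)$ — showing it is a supermartingale up to a drift that is harmless — and concentrate this single real-valued process; the factor $p \asymp \ln d$ entering the variance bound is then the source of one $\ln d$, and the threshold $s^{-1} \lesssim 1$ for the large-deviation estimate to kick in contributes the rest, yielding the $\kappa_d^2 \ln(d)^2$ scaling. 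A secondary technical point is justifying the normalization step that lets one invoke the definition of $\kappa_d$: one must check that conditioning on $\mathcal F_t$ and rescaling $\mu_t$ to isotropic position leaves the third-moment tensor comparable to $\kappa_d$, using $A_t^{-1/2} \preceq C' \Id$ which holds precisely because $\norm{A_t}_{op} \le 3$ forces, together with log-concavity and the structure of $\mu_t$, a two-sided bound on $A_t$ on the relevant event.
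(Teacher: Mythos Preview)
Your plan is correct and matches the paper's proof: reduce via $\tau \le 1 + 9/T$ to a bound on $\PP\bigl(\sup_{t \le 10/s}\|A_t\|_{op} \ge 3\bigr)$, control the operator norm through $\mathrm{Tr}(\tilde A_t^{p})$ with $p = \ln d$ (the paper passes to the martingale part $\tilde A_t = A_t - \mathrm{I_d} + \int_0^t A_sC_s^2A_s\,ds$ and analyzes $\ln\mathrm{Tr}(\tilde A_t^{\ln d})$), and bound the resulting drift and quadratic variation via the $\kappa_d$ estimates of \cite{eldan2013thin} after isotropic rescaling of $\mu_t$, finishing with Dubins--Schwarz and Doob's maximal inequality. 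One small correction to your last paragraph: no lower bound on $A_t$ is needed in the normalization step---after the substitution $y = A_t^{-1/2}x$ the residual linear factor is $C_tA_t^{1/2}$, which is $\preceq 2\,\mathrm{I_d}$ using only $C_t \preceq \mathrm{I_d}$ and the upper bound $\|A_t\|_{op} \le 3$.
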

\begin{proof}
	First, by using the previous claim, we may see that for any $s \geq 5$,
	$$\PP\left(\tau > s\right) \leq \PP\left(\frac{1}{T}\geq \frac{s-1}{4}\right) \leq \PP\left(\frac{1}{T}\geq \frac{s}{5}\right) = \PP\left(5s^{-1} \geq T\right) = \PP\left(\max\limits_{0\leq t \leq 5s^{-1}}\norm{A_t}_{op} \geq 2\right).$$
	Recall from Proposition \ref{dAt},
	$$dA_t = \int\limits_{\RR^d}(x-a_t)\otimes(x-a_t)\langle C_t\left(x - a_t\right),dB_t\rangle\mu_t(dx) - A_tC_t^2A_tdt.$$
	Since we are trying to bound the operator norm of $A_t$, we might as well just consider the matrix $\tilde{A}_t =  A_t + \int\limits_0^tA_sC^2_sA_sds$. Note that, by definition of $T$, for any $t \leq T$, 
	$$\int\limits_0^tA_sC_s^2A_sds\preceq \mathrm{I}_d.$$ 
	Thus, for $t \in [0,T]$,
	\begin{equation} \label{eq: Atilde bound}
	3\mathrm{I}_d \succeq A_t + \mathrm{I}_d\succeq \tilde{A}_t \succeq A_t.
	\end{equation}
	Also, $\tilde{A}_t$ can be written as, 
	\begin{equation} \label{Atilde}
	d\tilde{A}_t = \int\limits_{\RR^d}(x-a_t)\otimes(x-a_t)\langle C_t(x-a_t),dB_t\rangle\mu_t(dx),\ \tilde{A}_0 = \mathrm{I}_d.
	\end{equation}
	The above shows
	 $$\PP\left(\max\limits_{0\leq t \leq 5s^{-1}}\norm{A_t}_{op} \geq 2\right) \leq \PP\left(\max\limits_{0\leq t \leq 5s^{-1}}||\tilde{A}_t||_{op} \geq 2\right).$$
	We note than whenever $||\tilde{A}_t||_{op} \geq 2$ then also $\mathrm{Tr}\left(\tilde{A}_t^{4\ln(d)}\right)^{\frac{1}{4\ln(d)}} \geq 2$, so that
	\begin{align} \label{doobdec}
	\PP&\left(\max\limits_{0\leq t \leq 5s^{-1}}||\tilde{A}_t||_{op} \geq 2\right) \leq \PP\left(\max\limits_{0\leq t \leq 5s^{-1}}\mathrm{Tr}\left(\tilde{A}_t^{4\ln(d)}\right)^{\frac{1}{4\ln(d)}} \geq 2\right)\nonumber\\
	&\leq \PP\left(\max\limits_{0\leq t \leq 5s^{-1}}\ln\left(\mathrm{Tr}\left(\tilde{A}_t^{4\ln(d)}\right)\right) \geq 2\ln(d)\right) = \PP\left(\max\limits_{0\leq t \leq 5s^{-1}}\left(M_t + E_t\right) \geq 2\ln(d)\right),
	\end{align}
	where $M_t$ and $E_t$ form the Doob-decomposition of $\ln\left(\mathrm{Tr}\left(\tilde{A}_t^{4\ln(d)}\right)\right)$. That is, $M_t$ is a local martingale and $E_t$ is a process of bounded variation. To calculate the differential of the Doob-decomposition, fix $t$,
	let $v_1,...,v_n$ be the unit eigenvectors of $\tilde{A}_t$ and let $\alpha_{i,j} = \langle v_i, \tilde{A}_tv_j\rangle$ with
	\begin{align*}
	d\alpha_{i,j} = \int\limits_{\RR^d}\langle x,v_i\rangle\langle x,v_j\rangle\langle C_tx,dB_t\rangle\mu_t(dx + a_t),
	\end{align*}
	which follows from \eqref{Atilde}. Also define
	$$\xi_{i,j} = \frac{1}{\sqrt{\alpha_{i,i}\alpha_{j,j}}}\int\limits_{\RR^d}\langle x,v_i\rangle\langle x,v_j\rangle C_tx\mu_t(dx+ a_t).$$
	So that
	$$d\alpha_{i,j} =  \sqrt{\alpha_{i,i}\alpha_{j,j}}\inner{\xi_{i,j}}{dB_t},\ \ \ \frac{d}{dt}[\alpha_{i,j}]_t = \alpha_{i,i}\alpha_{j,j}\norm{\xi_{i,j}}^2.$$
	Now, since $v_i$ is an eigenvector corresponding to the eigenvalue $\alpha_{i,i}$, we have
	$$\xi_{i,j} = \int\limits_{\RR^d}\langle \tilde{A}_t^{-1/2}x,v_i\rangle\langle \tilde{A}_t^{-1/2}x,v_j\rangle C_tx\mu_t(dx+a_t).$$
	If we define the measure $\tilde{\mu}_t(dx) = \det(\tilde{A}_t)^{1/2}\mu_t(\tilde{A}_t^{1/2}dx + a_t)$, then $\tilde{\mu}_t$ has the law of a centered log-concave random vector with covariance $\tilde{A}_t^{-1/2}A_t\tilde{A}_t^{-1/2}\preceq \mathrm{I}_d$. By making the substitution $y = \tilde{A}_t^{-1/2}x$, the above expression becomes
	$$\xi_{i,j}=\int\limits_{\RR^d} \inner{y}{v_i}\inner{y}{v_j}C_t\tilde{A}_t^{1/2}y\tilde{\mu}_t(dy).$$
	By \eqref{eq: Atilde bound} and the definition of $T$, $C_t$, for any $t \leq T$, $\tilde{A}_t^{1/2} \preceq 2\mathrm{I}_d$ and $C_t \preceq \mathrm{I}_d$. So, $\norm{C_t\tilde{A}_t^{1/2}}_{op}\leq 2$. Under similar conditions, it was shown in \cite{eldan2013thin}, Lemma 3.2, that there exists a universal constant $C >0$ for which
	\begin{itemize}
		\item for any $1 \leq i \leq d$, $\norm{\xi_{i,i}}^2\leq C$.
		\item for any $1 \leq i \leq d$, $\sum\limits_{j=1}^d\norm{\xi_{i,j}}^2\leq C\kappa_d^2$.
	\end{itemize}
	Furthermore, in the proof of Proposition 3.1 in the same paper it was shown
	\begin{align*}
	d\mathrm{Tr}\left(\tilde{A}_t^{4\ln(d)}\right) \leq 4\ln(d)\sum\limits_{i=1}^d\alpha_{i,i}^{4\ln(d)}\inner{\xi_{i,i}}{dB_t} + 16C\kappa_d^2\ln(d)^2\mathrm{Tr}\left(\tilde{A}_t^{4\ln(d)}\right)dt.
	\end{align*}
	So, using It\^o's formula with the function $\ln(x)$ we can calculate the differential of the Doob decomposition \eqref{doobdec}. Specifically, we use the fact that the second derivative of $\ln(x)$ is negative and get
	$$dE_t \leq 16C\kappa_d^2\ln(d)^2\frac{\mathrm{Tr}\left(\tilde{A}_t^{4\ln(d)}\right)}{\mathrm{Tr}\left(\tilde{A}_t^{4\ln(d)}\right)} = 16C\kappa_d^2\ln(d)^2, \ E_0 = \ln(d),$$
	and
\begin{equation} \label{eq:dMt}
	\frac{d}{dt}[M]_t \leq 16C^2\ln(d)^2\left(\frac{\mathrm{Tr}\left(\tilde{A}_t^{4\ln(d)}\right)}{\mathrm{Tr}\left(\tilde{A}_t^{4\ln(d)}\right)}\right)^2 = 16C^2\ln(d)^2.
\end{equation}
	Hence, $E_t \leq t\cdot 16C\kappa_n^2\ln(d)^2 + \ln(d)$, which together with \eqref{doobdec} gives
	$$\PP\left(\tau > s\right) \leq \PP\left(\max\limits_{0\leq t \leq 5s^{-1}}M_t \geq 2\ln(d) - \ln(d) - 80s^{-1}C\kappa_d^2\ln(d)^2\right)\ \forall s\geq5.$$
	Under the assumption $s > 80C\kappa_d^2\ln(d)^2$, and since $d \geq 8$, the above can simplify to
	\begin{equation}\label{prob-bound}
	\PP\left(\tau > s\right) \leq \PP\left(\max\limits_{0\leq t \leq 5s^{-1}}M_t \geq \frac{1}{2}\ln(d)\right).
	\end{equation}
	To bound this last expression, we will apply the Dubins-Schwartz theorem to write
	$$M_t = W_{[M]_t},$$
	where $W_t$ is some Brownian motion. 
	Combining this with \eqref{prob-bound} gives
	\begin{align*}
	\PP\left(\tau > s\right)& \leq \PP\left(\max\limits_{0\leq t \leq 5s^{-1}}W_{[M]_t} \geq \frac{\ln(d)}{2}\right).
	\end{align*}
	An application of Doob's maximal inequality (\cite{revuz2013continuous} Proposition I.1.8) shows that for any $t',K>0$
	$$\PP\left(\max\limits_{0\leq t \leq t'}W_t \geq K\right)\leq \exp\left(-\frac{K^2}{2t'}\right).$$
	We now integrate \eqref{eq:dMt} and use the above inequality to obtain
	$$\PP\left(\max\limits_{0\leq t \leq 5s^{-1}}W_{[M]_t} \geq \frac{\ln(d)}{2}\right) \leq e^{-cs},$$
	where $c > 0$ is some universal constant.	
\end{proof}
\begin{proof}[Proof of Theorem \ref{thm:w2-log-concave}]

By definition of $T$ and $C_t$, we have that for any $t \leq T$, $A_tC_t \preceq 2\mathrm{I}_d$ and for any $t > T$,  $A_tC_t = A_tA_t^\dagger \preceq \mathrm{I}_d$. We now invoke Theorem \ref{main}, with $\Gamma_t = A_tC_t$, for which
$$\mathrm{Tr}\left(\EE[\Gamma_t^4]\EE\left[\Gamma_t^2\right]^
\dagger\right)\leq 4d,$$
and, by Lemma \ref{expobound}
$$\mathrm{Tr}\left(\EE[\Gamma_t^2]\right) \leq 4d\PP\left(\tau > t\right)\leq 4de^{-ct}\ \ \ \ \forall t > C\cdot \kappa_d^2\ln(d)^2.$$
If $G$ is the standard $d$-dimensional Gaussian, then the theorem yields
\begin{align*}
\mathcal{W}_2^2(S_n, G) &\leq \int\limits_0^{C\cdot \kappa_d^2\ln(d)^2\ln(n)} 4\frac{d}{n}dt + \int\limits_{C\cdot \kappa_d^2\ln(d)^2\ln(n)}^\infty 16d\PP\left(\tau > t\right)\\
&\leq 4\frac{dC\cdot \kappa_d^2\ln(d)^2\ln(n)}{n} + 16d\int\limits_{C\cdot \kappa_d^2\ln(d)^2\ln(n)}^\infty e^{-ct}dt \\
&\leq C'\frac{d\cdot \kappa_d^2\ln(d)^2\ln(n)}{n}.
\end{align*}
Thus
  \[ \mathcal{W}_2(S_n, G) \leq \frac{C\kappa_d\ln(d)\sqrt{d\ln(n)}}{\sqrt{n}}, \]

\end{proof}
\section{Convergence rates in entropy}
Throughout this section, we fix a centered measure $\mu$ on $\RR^d$ with an invertible covariance matrix $\Sigma$ and $G \sim \mathcal{N}\left(0, \Sigma\right)$. Let $\{X^{(i)}\}$ be independent copies of $X \sim \mu$ and $S_n := \frac{1}{\sqrt{n}}\sum\limits_{i=1}^{n}X^{(i)}$.

Our goal is to study the quantity $\text{Ent}\left(S_n||G\right)$. In light of Theorem \ref{thm: quant entropic clt}, we aim to construct a martingale embedding $(X_t, \Gamma_t, 1)$ such that $X_1 \sim \mu$ and which satisfies appropriate bounds on the matrix $\Gamma_t$. Our construction uses the process $a_t$ from Proposition \ref{prop:stochastic-localization} with the choice $C_t := \frac{1}{1-t} \mathrm{I}_d$. Property \ref{item:da_t} in Proposition \ref{prop:stochastic-localization} gives
\begin{equation*} \label{derivativeat}
a_t = \int\limits_{0}^t\frac{A_s}{1-s}dB_s.
\end{equation*}
Thus, we denote
\begin{equation*}
\Gamma_t := \frac{A_t}{1-t}.
\end{equation*}
Since $\int\limits_0^1 \lambda_{min} (C_t^2) = \infty$, Proposition \ref{finite} shows that the triplet $(a_t, \Gamma_t, 1)$ is a martingale embedding of $\mu$. As above, the sequence $\Gamma_t^{(i)}$ will denote independent copies of $\Gamma_t$ and we define $\tilde{\Gamma}_t := \sqrt{\sum_{i=1}^n \left(\Gamma_t^{(i)}\right)^2}$.\\
\\
\subsection{Properties of the embedding}
The martingale embedding has several useful properties which we record in this section.
First, we give an alternative description of the process which will be of use for us. Define the random process
$$v  := \arg\min\limits_u \frac{1}{2}\int\limits_{0}^1\EE\left[\norm{u_t}^2\right],$$
where $u$ varies over all $\FF_t$-adapted drifts such that $B_1 + \int\limits_{0}^1u_tdt \sim \mu$. Denote $$Y_t := B_t + \int\limits_{0}^t v_sds.$$
In \cite{eldan2018regularization} (Section 2.2) it was shown that the density of the measure $Y_1|\mathcal{F}_t$ has the same dynamics as the density of $\mu_t$. Thus, almost surely $Y_1|\mathcal{F}_t \sim \mu_t$ and since $a_t$ is the expectation of $\mu_t$, we have the identity
\begin{equation} \label{conditional at}
a_t = \EE\left[Y_1| \mathcal{F}_t \right],
\end{equation}
and in particular we have $a_1 = Y_1$. Moreover, the same reasoning implies that $A_t = \mathrm{Cov}(Y_1|\mathcal{F}_t)$ and
\begin{equation} \label{gamma_var}
\Gamma_t = \frac{\mathrm{Cov}(Y_1|\mathcal{F}_t)}{1-t}.
\end{equation}
The process $Y_t$ goes back at least to the works of F\"ollmer \cite{follmer1985entropy, follmer1986time}. In a later work, by Lehec \cite{lehec2013representation}, it is shown that $v_t$ is a martingale and that
\begin{equation} \label {variational entropy}
\mathrm{Ent}(Y_1||\gamma) = \frac{1}{2}\int\limits_0^1\EE\left[\norm{v_t}^2\right]dt,
\end{equation}
where $\gamma$ denotes the standard Gaussian.
\begin{lemma} \label{derivative of gamma}
It holds that $\frac{d}{dt}\EE\left[\mathrm{Cov}(Y_1|\mathcal{F}_t)\right] = -\EE\left[\Gamma_t^2\right].$
\end{lemma}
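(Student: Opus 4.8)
The plan is to reduce the statement to the second-moment process of $a_t$, which conveniently avoids any estimate on third moments of $\mu_t$. Recall from \eqref{conditional at} that $a_t = \EE\left[Y_1\mid \mathcal F_t\right]$ and, as recorded immediately after it, $A_t = \mathrm{Cov}(Y_1\mid\mathcal F_t)$. Writing the conditional covariance as a conditional second moment minus the squared conditional mean and applying the tower property, one gets
$$\EE\left[A_t\right] = \EE\left[\EE\left[Y_1^{\otimes 2}\mid\mathcal F_t\right]\right] - \EE\left[a_t^{\otimes 2}\right] = \Sigma - \EE\left[a_t^{\otimes 2}\right],$$
using that $Y_1 = a_1\sim\mu$ is centered with covariance $\Sigma$. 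Hence it suffices to prove $\frac{d}{dt}\EE\left[a_t^{\otimes 2}\right] = \EE\left[\Gamma_t^2\right]$.

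Next I would invoke property \ref{item:da_t} of Proposition \ref{prop:stochastic-localization} with the present choice $C_t = \frac{1}{1-t}\mathrm I_d$, which gives $da_t = \Gamma_t\,dB_t$ with $\Gamma_t = A_t/(1-t)$ and $a_0 = 0$. Since conditional expectation is an $L^2$-contraction and $\EE\norm{Y_1}^2 = \mathrm{Tr}(\Sigma) < \infty$, the martingale $a_t$ is bounded in $L^2$; localizing the local martingale $a_t = \int_0^t\Gamma_s dB_s$ and using that $\norm{a_s}^2$ is a submartingale with $\sup_{s\le 1}\EE\norm{a_s}^2 \le \mathrm{Tr}(\Sigma)$, monotone convergence yields $\EE\int_0^1\norm{\Gamma_s}_{HS}^2\,ds \le \mathrm{Tr}(\Sigma) < \infty$. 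Consequently $a_t$ is a genuine square-integrable martingale and the multidimensional It\^o isometry (in the matrix form already used in the proof of Theorem \ref{main}) applies: $\EE\left[a_t^{\otimes 2}\right] = \EE\left[\int_0^t\Gamma_s^2\,ds\right] = \int_0^t\EE\left[\Gamma_s^2\right]ds$, the last step by Fubini. Therefore $\EE[A_t] = \Sigma - \int_0^t\EE[\Gamma_s^2]\,ds$, which is precisely the asserted identity $\frac{d}{dt}\EE\left[\mathrm{Cov}(Y_1\mid\mathcal F_t)\right] = -\EE\left[\Gamma_t^2\right]$ (an identity of absolutely continuous functions of $t$, hence the derivative in the a.e. sense, and pointwise wherever $s\mapsto\EE[\Gamma_s^2]$ is continuous).

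Alternatively, the same conclusion drops out of taking expectations directly in Proposition \ref{dAt}: with $C_t = \frac{1}{1-t}\mathrm I_d$ the finite-variation part of $dA_t$ is exactly $-A_tC_t^2A_t\,dt = -\Gamma_t^2\,dt$, so one only needs the martingale part $\int_0^t\big(\int(x-a_s)^{\otimes 3}\mu_s(dx)\big)\frac{1}{1-s}\,dB_s$ to be a true martingale. I prefer the $a_t^{\otimes 2}$ route precisely because it requires only the second-moment bound above, whereas this second route needs integrability of the third moments of $\mu_s$ (and their behavior as $t\to 1$). The main obstacle in either approach is exactly this integrability bookkeeping --- upgrading local martingales to true martingales and controlling the contributions near $t = 1$, where $1/(1-t)$ blows up while $A_t\to 0$ --- and the $L^2$-contraction property of conditional expectation is what resolves it cleanly; the rest is routine.
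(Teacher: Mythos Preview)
Your proof is correct and follows essentially the same route as the paper: write $\mathrm{Cov}(Y_1\mid\mathcal F_t) = \EE[Y_1^{\otimes 2}\mid\mathcal F_t] - a_t^{\otimes 2}$, note the first term has constant expectation, and compute $\frac{d}{dt}\EE[a_t^{\otimes 2}] = \EE[\Gamma_t^2]$ from $da_t = \Gamma_t\,dB_t$. Your treatment is actually more careful than the paper's about the integrability needed to pass from local to true martingales.
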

\begin{proof}
From \eqref{conditional at}, we have
$$\mathrm{Cov}(Y_1|\mathcal{F}_t) = \EE\left[Y_1^{\otimes 2} |\mathcal{F}_t\right] - \EE\left[Y_1 |\mathcal{F}_t\right]^{\otimes 2} =\EE\left[Y_1^{\otimes 2} |\mathcal{F}_t\right] - a_t^{\otimes 2}.$$
$a_t$ is a martingale, hence
\begin{equation}
\frac{d}{dt}\EE\left[\mathrm{Cov}(Y_1|\mathcal{F}_t)\right] = -\frac{d}{dt}\EE\left[[a]_t\right] = -\EE\left[\Gamma_t^2\right].
\end{equation}
\end{proof}
Our next goal is to recover $v_t$ from the martingale $a_t$.
\begin{lemma} \label{v as gamma}
The drift $v_t$ satisfies that identity $v_t = \int\limits_0^t\frac{\Gamma_s - \mathrm{I}_d}{1-s}dB_s.$
Furthermore,
\begin{equation}\label{eq:vtexpint}
\EE \left [\norm{v_t}^2 \right ] = \int\limits_0^t\frac{\mathrm{Tr}\left(\EE\left[\left(\Gamma_s - \mathrm{I}_d\right)^2\right]\right)}{(1-s)^2}ds.
\end{equation}
\end{lemma}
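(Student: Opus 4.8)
The plan is to first identify $v_t$ as an explicit linear combination of the processes $a_t$ and $Y_t$, and then differentiate; the It\^o computation will reveal that the drift terms cancel, leaving exactly the claimed stochastic integral. I would begin by recording that $Y_1 = a_1 = B_1 + \int_0^1 v_s\,ds$ and that $a_t = \EE[Y_1\mid\FF_t]$ by \eqref{conditional at}. Splitting $Y_1$ and taking conditional expectations, I use $\EE[B_1\mid\FF_t] = B_t$ together with the fact (proved in \cite{lehec2013representation}) that $v_t$ is a martingale, so that $\EE[v_s\mid\FF_t] = v_{\min(s,t)}$. A conditional Fubini argument --- legitimate since $\EE\int_0^1\norm{v_s}^2\,ds = 2\,\Ent(Y_1\|\gamma) < \infty$ by \eqref{variational entropy} --- then yields
$$a_t = B_t + \int_0^t v_s\,ds + \int_t^1 v_t\,ds = Y_t + (1-t)\,v_t,$$
and hence $v_t = (1-t)^{-1}(a_t - Y_t)$. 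As a sanity check, at $t = 0$ this forces $v_0 = 0$, and at $t = 1$ it recovers $a_1 = Y_1$.

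Next I would apply It\^o's formula. Setting $N_t := a_t - Y_t$, we have $N_0 = 0$ and, using $da_t = \frac{A_t}{1-t}\,dB_t = \Gamma_t\,dB_t$ and $dY_t = dB_t + v_t\,dt$,
$$dN_t = (\Gamma_t - \mathrm{I_d})\,dB_t - v_t\,dt.$$
Since $v_t = (1-t)^{-1}N_t$ and $(1-t)^{-1}$ is deterministic of bounded variation, the product rule gives
$$dv_t = \frac{1}{1-t}\,dN_t + \frac{1}{(1-t)^2}\,N_t\,dt = \frac{\Gamma_t - \mathrm{I_d}}{1-t}\,dB_t - \frac{v_t}{1-t}\,dt + \frac{v_t}{1-t}\,dt = \frac{\Gamma_t - \mathrm{I_d}}{1-t}\,dB_t,$$
the drift terms cancelling exactly. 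As $v_0 = 0$, integrating gives $v_t = \int_0^t \frac{\Gamma_s - \mathrm{I_d}}{1-s}\,dB_s$. Finally, $\Gamma_s = A_s/(1-s)$ is symmetric, so $\Gamma_s - \mathrm{I_d}$ is symmetric and $\norm{(1-s)^{-1}(\Gamma_s - \mathrm{I_d})}_{HS}^2 = (1-s)^{-2}\,\mathrm{Tr}\big((\Gamma_s - \mathrm{I_d})^2\big)$; It\^o's isometry then immediately produces \eqref{eq:vtexpint}.

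The main obstacle is the first step: one must carefully justify the identity $a_t = Y_t + (1-t)\,v_t$, in particular the interchange of conditional expectation with the time integral and the appeal to the martingale property of $v_t$ from \cite{lehec2013representation} (which in turn presupposes enough integrability, guaranteed here by finiteness of the relative entropy). Once that identity is in hand, the remainder is a short and essentially mechanical stochastic-calculus computation.
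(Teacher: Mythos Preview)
Your argument is correct and takes a genuinely different route from the paper's. The paper does not first prove the pointwise identity $a_t = Y_t + (1-t)v_t$; instead it \emph{guesses} the candidate $\tilde v_t := \int_0^t \frac{\Gamma_s - \mathrm{I_d}}{1-s}\,dB_s$, uses a stochastic Fubini computation (the same trick as in Lemma~\ref{lem:rel entropy method}) to show $B_1 + \int_0^1 \tilde v_s\,ds = a_1 = Y_1$, and then argues by a uniqueness principle: since $v_t - \tilde v_t$ is a martingale whose time-integral over $[0,1]$ vanishes, it must be identically zero. Your approach is more constructive --- you derive the formula rather than verify it --- and the intermediate identity $v_t = (1-t)^{-1}(a_t - Y_t)$ is a clean structural fact that is not made explicit in the paper. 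The paper's route, on the other hand, reuses the Fubini-for-stochastic-integrals device that already appears in the proof of Lemma~\ref{lem:rel entropy method}, giving a more uniform presentation. Both arguments rely on the martingale property of $v_t$ from \cite{lehec2013representation}, and both finish \eqref{eq:vtexpint} the same way via It\^o's isometry.
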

\begin{proof}
 We begin by writing
$$da_t = dB_t + \left(\Gamma_t - \mathrm{I}_d\right)dB_t.$$
Using Fubini's theorem then yields
$$\int\limits_0^1\left(\Gamma_s - \mathrm{I}_d\right)dB_s = \int\limits_0^1\int\limits_s^1\frac{\Gamma_s - \mathrm{I}_d}{1-s}dtdB_s = \int\limits_{0}^1\int\limits_0^t\frac{\Gamma_s - \mathrm{I}_d}{1-s}dB_sdt.$$
Therefore, defining $\tilde v_t = \int\limits_0^t\frac{\Gamma_s - \mathrm{I}_d}{1-s}dB_s$ we have that $\tilde v_t$ is a martingale. 
 and that $B_1 + \int\limits_0^1 \tilde v_t dt = a_1$. It follows that $v_t - \tilde v_t$ is a martingale and that $\int\limits_0^1 (v_t - \tilde v_t) dt = 0$. We will now show that if a martingale $Q_t$ satisfies $Q_0 = 0$ and $\int\limits_{0}^1Q_tdt = 0$ a.s., then $Q_t = 0$ for every $t\in [0,1]$. From this, it will follow that $v_t = \tilde{v}_t$. Indeed, write $Q_t = \int\limits_0^t Q'_sdB_s$, for some adapted process $Q'_t$. Using Fubini's theorem, a calculation, similar to the one above, gives the identity,
 $$0 = \int\limits_0^1Q_tdt = \int\limits_0^1 (1-t)Q'_tdB_t.$$
 Considering the martingale $\int\limits_0^\cdot (1-t)Q'_tdB_t$, we now have, for any $s \in [0,1)$
 $$0 = \EE\left[\int\limits_0^1 (1-t)Q'_tdB_t|\mathcal{F}_s\right] = \int\limits_0^s (1-t)Q'_tdB_t.$$
 Thus, $Q' = 0$ almost surely, which implies, for every $t \in [0,1]$,  $Q_t = Q_0 = 0.$ Therefore $v_t = \tilde v_t$, or in other words
\begin{equation*}
 v_t = \int\limits_0^t\frac{\Gamma_s - \mathrm{I}_d}{1-s}dB_s.
\end{equation*}
Finally, equation \eqref{eq:vtexpint} follows from a direct application of It\^{o}'s isometry.
\end{proof}
A combination of equations \eqref{variational entropy} and \eqref{eq:vtexpint} gives the useful identity,
\begin{align} \label{eq: variational entropy}
\mathrm{Ent}\left(Y_1||\gamma\right) = \frac{1}{2}\int\limits_0^1\int\limits_0^t\frac{\mathrm{Tr}\left(\EE\left[\left(\Gamma_s - \mathrm{I}_d\right)^2\right]\right)}{(1-s)^2}dsdt =\frac{1}{2}\int\limits_0^1\frac{\mathrm{Tr}\left(\EE\left[\left(\Gamma_t - \mathrm{I}_d\right)^2\right]\right)}{1-t}dt.
\end{align}
The above lemma also affords a representation of $\EE\left[\mathrm{Tr}\left(\Gamma_t\right)\right]$ in terms of $\EE\left[\norm{v_t}^2\right]$.
\begin{lemma}\label{lem:gamma representation}
	 It holds that
	$$\EE\left[\mathrm{Tr}(\Gamma_t)\right] =  d - (1-t)\left(d - \mathrm{Tr}\left(\Sigma\right) + \EE\left[\norm{v_t}^2\right]\right).$$
\end{lemma}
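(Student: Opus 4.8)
The plan is to start from the representation $a_t = \EE[Y_1 \mid \mathcal F_t]$ together with $\Gamma_t = \mathrm{Cov}(Y_1\mid\mathcal F_t)/(1-t)$, and to compute $\EE[\mathrm{Tr}(\Gamma_t)]$ by relating $\EE[\mathrm{Tr}(\mathrm{Cov}(Y_1\mid\mathcal F_t))]$ to the quantities that have already been identified in the excerpt. Concretely, Lemma \ref{derivative of gamma} gives
$\frac{d}{dt}\EE[\mathrm{Cov}(Y_1\mid\mathcal F_t)] = -\EE[\Gamma_t^2]$, and hence
$\frac{d}{dt}\EE[\mathrm{Tr}\,\mathrm{Cov}(Y_1\mid\mathcal F_t)] = -\EE[\mathrm{Tr}(\Gamma_t^2)]$. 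Since $\Gamma_t = \mathrm{Cov}(Y_1\mid\mathcal F_t)/(1-t)$, writing $g(t) := \EE[\mathrm{Tr}\,\mathrm{Cov}(Y_1\mid\mathcal F_t)]$ we get $\EE[\mathrm{Tr}(\Gamma_t)] = g(t)/(1-t)$, so the goal reduces to an ODE-type identity for $g$.

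Next I would pin down the boundary data and the connection to $v_t$. At $t=0$ we have $\mathrm{Cov}(Y_1\mid\mathcal F_0) = \mathrm{Cov}(Y_1) = \Sigma$ (since $a_1 = Y_1 \sim \mu$), so $g(0) = \mathrm{Tr}(\Sigma)$. The key extra input is the decomposition $da_t = dB_t + (\Gamma_t - \mathrm{I_d})\,dB_t$ from Lemma \ref{v as gamma}: taking quadratic variation of $a_t$ gives $\frac{d}{dt}[a]_t = \EE[\Gamma_t^2]$ pointwise, but more usefully, since $\mathrm{Cov}(Y_1\mid\mathcal F_t) = \EE[Y_1^{\otimes 2}\mid\mathcal F_t] - a_t^{\otimes 2}$ and $Y_1 = B_1 + \int_0^1 v_s\,ds$ with $v_t$ a martingale, one can expand $\EE[\mathrm{Tr}\,\mathrm{Cov}(Y_1\mid\mathcal F_t)]$ directly. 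I expect the cleanest route is: integrate $\frac{d}{dt}g(t) = -\EE[\mathrm{Tr}(\Gamma_t^2)]$ and separately use It\^o's isometry on $a_t = B_t + \int_0^t(\Gamma_s - \mathrm{I_d})dB_s$ to express $\EE[\mathrm{Tr}(\Gamma_t^2)]$ in terms of $d$, $\EE[\mathrm{Tr}(\Gamma_t)]$, and the increment of $\EE\norm{v_t}^2$. Indeed $\EE[\mathrm{Tr}((\Gamma_t - \mathrm{I_d})^2)] = (1-t)^2 \frac{d}{dt}\EE\norm{v_t}^2$ by \eqref{eq:vtexpint}, and expanding the square gives $\EE[\mathrm{Tr}(\Gamma_t^2)] = (1-t)^2 \frac{d}{dt}\EE\norm{v_t}^2 + 2\EE[\mathrm{Tr}(\Gamma_t)] - d$.

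Combining these, with $h(t) := \EE[\mathrm{Tr}(\Gamma_t)] = g(t)/(1-t)$, one gets a first-order linear ODE for $h$: from $g'(t) = -\EE[\mathrm{Tr}(\Gamma_t^2)]$ and $g(t) = (1-t)h(t)$ we obtain $(1-t)h'(t) - h(t) = -(1-t)^2 w'(t) - 2h(t) + d$, where $w(t) := \EE\norm{v_t}^2$, i.e. $(1-t)h'(t) + h(t) = d - (1-t)^2 w'(t)$, equivalently $\frac{d}{dt}\big((1-t)h(t)\big)$... I would then solve this ODE with the boundary value $h(0) = \mathrm{Tr}(\Sigma)$ (equivalently $g(0) = \mathrm{Tr}(\Sigma)$, $w(0)=0$), integrate by parts the $(1-t)^2 w'(t)$ term, and check that the solution is exactly $h(t) = d - (1-t)(d - \mathrm{Tr}(\Sigma) + w(t))$, which is the claimed identity. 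The main obstacle I anticipate is bookkeeping: making sure the quadratic-variation computation of $\EE[\mathrm{Tr}(\Gamma_t^2)]$ in terms of $w'(t)$ and $h(t)$ is done with the correct cross terms (the cross term $2\EE[\mathrm{Tr}(\Gamma_t \cdot \mathrm{I_d})] = 2h(t)$ and the constant $-d$ must come out with the right signs), and verifying the boundary behavior as $t \to 1$ is consistent; once the ODE is set up correctly the verification of the closed form is a routine substitution.
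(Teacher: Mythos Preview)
Your proposal is correct and is essentially the same argument as the paper's, just packaged differently: both proofs combine Lemma~\ref{derivative of gamma} (giving $g'(t)=-\EE[\mathrm{Tr}(\Gamma_t^2)]$), identity~\eqref{eq:vtexpint} (linking $\EE\|v_t\|^2$ to $\EE[\mathrm{Tr}((\Gamma_t-\mathrm{I_d})^2)]$), and the initial condition $g(0)=\mathrm{Tr}(\Sigma)$. The paper works in integrated form and integrates the $\EE[\mathrm{Tr}(\Gamma_s^2)]/(1-s)^2$ term by parts, which causes the two middle integrals to cancel; you instead differentiate~\eqref{eq:vtexpint} to obtain the first-order ODE $(1-t)h'(t)+h(t)=d-(1-t)^2 w'(t)$ and verify the closed form by substitution. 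One small slip: $(1-t)h'(t)+h(t)$ is \emph{not} $\frac{d}{dt}\big((1-t)h(t)\big)$ (that would be $(1-t)h'(t)-h(t)$), but since your stated plan is to check the formula by direct substitution in the ODE, this does not affect the argument.
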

\begin{proof}
	The identity can be obtained through integration by parts. By Lemma \ref{v as gamma},
	\begin{align*}
	\EE[\norm{v_t}^2] & \stackrel{\eqref{eq:vtexpint}}{=} \int\limits_0^t\frac{\mathrm{Tr}\left(\EE\left[\left(\Gamma_s - \mathrm{I}_d\right)^2\right]\right)}{(1-s)^2}ds\\
	&=\int\limits_0^t\frac{\mathrm{Tr}\left(\EE\left[\Gamma_s^2\right]\right)}{(1-s)^2}ds - 2\int\limits_0^t\frac{\mathrm{Tr}\left(\EE\left[\Gamma_s\right]\right)}{(1-s)^2}ds + \int\limits_0^t\frac{\mathrm{Tr}\left(\mathrm{I}_d\right)}{(1-s)^2}ds.
	\end{align*}
	Since, by Lemma \ref{derivative of gamma}, $\frac{d}{dt}\EE\left[\mathrm{Cov}\left(Y_1|\mathcal{F}_t\right)\right] = -\EE\left[\Gamma_t^2\right]$ integration by parts shows
	\begin{align*}
	\int\limits_0^t\frac{\mathrm{Tr}\left(\EE\left[\Gamma_s^2\right]\right)}{(1-s)^2}ds &= -\frac{\mathrm{Tr}\left(\EE\left[\mathrm{Cov}\left(Y_1|\mathcal{F}_s\right)\right]\right)}{(1-s)^2}\Bigg\vert^t_0 + 2\int\limits_0^t\frac{\mathrm{Tr}\left(\EE\left[\mathrm{Cov}\left(Y_1|\mathcal{F}_s\right)\right]\right)}{(1-s)^3}ds\\
	&=\mathrm{Tr}\left(\Sigma\right) -  \frac{\mathrm{Tr}\left(\EE\left[\Gamma_t\right]\right)}{1-t} + 2\int\limits_0^t\frac{\mathrm{Tr}\left(\EE\left[\Gamma_s\right]\right)}{(1-s)^2}ds,
	\end{align*}
	where we have used \eqref{gamma_var} and the fact $\mathrm{Cov}\left(Y_1|\mathcal{F}_0\right) = \mathrm{Cov}\left(Y_1\right) = \Sigma$. Plugging this into the previous equation shows
	$$\EE[\norm{v_t}^2] = \mathrm{Tr}\left(\Sigma\right) -  \frac{\mathrm{Tr}\left(\EE\left[\Gamma_t\right]\right)}{1-t} +\frac{d}{1-t}-d.$$
	or equivalently
	$$\EE\left[\mathrm{Tr}(\Gamma_t)\right] =  d - (1-t)\left(d - \mathrm{Tr}\left(\Sigma\right) + \EE\left[\norm{v_t}^2\right]\right).$$
\end{proof}
Next, as in Theorem \ref{thm: quant entropic clt}, we define $\sigma_t$ to be the minimal eigenvalue of $\EE\left[\Gamma_t\right]$, so that
$$\EE\left[\Gamma_t\right] \succeq \sigma_t\mathrm{I}_d.$$
Note that by Jensen's inequality we also have
\begin{equation} \label{sigma jensen}
\EE\left[\Gamma_t^2\right] \succeq \sigma_t^2\mathrm{I}_d.
\end{equation}
\begin{lemma} \label {lem:invertible gamma}
	Assume that $\Ent(Y_1||\gamma) < \infty$. Then $\Gamma_t$ is almost surely invertible for all $t \in [0,1)$ and, moreover, there exists a constant $m = m_\mu > 0$ for which
	$$\sigma_t \geq m, ~~ \forall t \in [0,1).$$
\end{lemma}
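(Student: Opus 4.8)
The plan is to prove the two assertions separately: almost sure invertibility of $\Gamma_t$ on $[0,1)$, which is easy, and the uniform bound $\sigma_t\ge m_\mu>0$, which is the real content.

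\emph{Invertibility and a first reduction.} With the choice $C_t=\frac{1}{1-t}\Id$, the equation $\frac{d}{dt}\Sigma_t^{-1}=C_t^2$ of Proposition~\ref{prop:sigma-evolution} together with $\lim_{t\to0^+}\Sigma_t^{-1}=0$ integrates to $\Sigma_t=\frac{1-t}{t}\Id$. Hence, for $0<t<1$, Proposition~\ref{prop:sigma-evolution} says that $\mu_t$ has a density with respect to Lebesgue measure proportional to $\rho(x)$ times the everywhere strictly positive density of a Gaussian of covariance $\Sigma_t$; in particular $\mathrm{supp}(\mu_t)=\mathrm{supp}(\mu)$. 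Since $\Sigma=\Cov(\mu)$ is invertible, $\mu$, and hence $\mu_t$, is not carried by any affine hyperplane, so $A_t=\Cov(\mu_t)\succ0$; together with $A_0=\Sigma\succ0$ this shows $\Gamma_t=\frac{A_t}{1-t}$ is a.s.\ invertible on $[0,1)$, and therefore $\EE[\Gamma_t]\succ0$ and $\sigma_t>0$ for each fixed $t<1$. Since $t\mapsto\EE[A_t]$ is continuous on $[0,1)$ (by Lemma~\ref{derivative of gamma}, or by the identification below), $t\mapsto\sigma_t=\lambda_{\min}(\EE[\Gamma_t])$ is continuous and strictly positive on $[0,1)$, hence bounded below on each $[0,1-\eps]$. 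It remains to prove $\liminf_{t\to1^-}\sigma_t>0$.

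\emph{An MMSE reformulation.} The point is that $\EE[A_t]$ is a minimum–mean–square–error matrix. Indeed, by \eqref{conditional at}--\eqref{gamma_var} and the surrounding discussion $A_t=\Cov(Y_1\mid\FF_t)$ with $Y_1\sim\mu$ and $Y_1\mid\FF_t\sim\mu_t$, while by the previous paragraph $\mu_t$ has exactly the structure of the posterior law of $X\sim\mu$ given a Gaussian-channel observation $X+\sigma_t G$, where $G\sim\mathcal N(0,\Id)$ is independent and $\sigma_t^2=\frac{1-t}{t}$; since $\EE[\Cov(Y_1\mid\FF_t)]$ depends only on the joint law of $Y_1$ and $\mu_t$, this gives $\EE[A_t]=\EE\big[\Cov(X\mid X+\sigma_t G)\big]$. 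Writing $Y=X+\sigma_t G$, from $X=Y-\sigma_t G$ we get $\Cov(X\mid Y)=\sigma_t^2\Cov(G\mid Y)$, and the conditional variance inequality $\EE[\Cov(G\mid Y)]\preceq\Cov(G)=\Id$ yields $\EE[A_t]\preceq\sigma_t^2\Id$. Separately, since $Y_1-\EE[Y_1\mid\FF_t]$ has covariance $\EE[A_t]$ and Gaussians maximize entropy at fixed covariance, $h(X\mid Y)\le\frac12\log\big((2\pi e)^d\det\EE[A_t]\big)$, where $h$ denotes differential entropy, whereas $h(X\mid Y)=h(X,Y)-h(Y)=h(X)+\frac d2\log(2\pi e\sigma_t^2)-h(X+\sigma_t G)$ and $h(X+\sigma_t G)\le\frac12\log\big((2\pi e)^d\det(\Sigma+\sigma_t^2\Id)\big)$. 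Rearranging, $\det\EE[A_t]\ \ge\ \tfrac{e^{2h(X)}}{(2\pi e)^d}\cdot\tfrac{\sigma_t^{2d}}{\det(\Sigma+\sigma_t^2\Id)}$.

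\emph{Conclusion of the limit.} Here $h(X)$ is finite: the hypothesis $\Ent(Y_1\| \gamma)=\Ent(\mu\|\gamma)<\infty$ and invertibility of $\Sigma$ force $h(\mu)$ to differ from $-\Ent(\mu\|\gamma)$ by a finite quantity depending only on $d$ and $\Sigma$, so $e^{2h(X)}>0$. Combining the determinant bound with $\EE[A_t]\preceq\sigma_t^2\Id$ (whose eigenvalues are therefore $\le\sigma_t^2$) gives $\lambda_{\min}(\EE[A_t])\ge\tfrac{\det\EE[A_t]}{(\sigma_t^2)^{d-1}}\ge\tfrac{e^{2h(X)}}{(2\pi e)^d}\cdot\tfrac{\sigma_t^2}{\det(\Sigma+\sigma_t^2\Id)}$, and, using $\sigma_t^2/(1-t)=1/t$,
\[
  \sigma_t=\frac{\lambda_{\min}(\EE[A_t])}{1-t}\ \ge\ \frac{e^{2h(X)}}{(2\pi e)^d\,t\,\det(\Sigma+\sigma_t^2\Id)}\ \xrightarrow[t\to1^-]{}\ \frac{e^{2h(X)}}{(2\pi e)^d\det\Sigma}\ >\ 0 .
\]
Hence $\liminf_{t\to1^-}\sigma_t>0$, and with the first reduction $m_\mu:=\inf_{t\in[0,1)}\sigma_t>0$; concretely one may take $m_\mu=\min\!\big(\inf_{t\in[0,t_*]}\sigma_t,\ \tfrac{e^{2h(X)}}{(2\pi e)^d\det(\Sigma+\norm{\Sigma}_{op}\Id)}\big)$ with $t_*=(1+\norm{\Sigma}_{op})^{-1}$. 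The main obstacle is exactly this limit: since $\EE[A_t]\to0$ one must rule out $\lambda_{\min}(\EE[A_t])=o(1-t)$, and attacking $\frac{d}{dt}\lambda_{\min}(\EE[A_t])$ directly via Lemma~\ref{derivative of gamma} fails because it would need a deterministic upper bound on $\norm{A_t}_{op}$, which is unavailable; recasting $\EE[A_t]$ as an MMSE matrix and invoking the entropy–power inequality — which is where $\Ent(\mu\|\gamma)<\infty$, i.e.\ finiteness of $h(\mu)$, is used — is what circumvents this. (The resulting $m_\mu$ may degrade with $d$, which is why the quantitative log-concave theorems require distribution-specific estimates.)
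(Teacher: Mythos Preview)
Your proof is correct and takes a genuinely different route from the paper's. The paper argues by contradiction via the identity $\Ent(Y_1\|\gamma)=\tfrac12\int_0^1\frac{\mathrm{Tr}(\EE[(\Gamma_t-\Id)^2])}{1-t}\,dt$: if $\sigma_{t'}\le \tfrac18$ at some $t'$, monotonicity of $\EE[\Cov(Y_1\mid\FF_t)]$ forces $\sigma_t\le \tfrac14$ on $[t',t'+\tfrac{1-t'}{2}]$, and a sequence $t_i\to 1$ of such times would make the integral diverge. You instead recognise $\EE[A_t]$ as the MMSE matrix $\EE[\Cov(X\mid X+\nu G)]$ in a Gaussian channel with noise variance $\nu^2=\tfrac{1-t}{t}$, then combine the max-entropy property of Gaussians (applied to $h(X\mid Y)$ and to $h(Y)$) to lower-bound $\det\EE[A_t]$ and hence $\lambda_{\min}(\EE[A_t])$. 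The paper's argument is shorter and stays inside the framework already built; yours is constructive, giving the explicit bound $\sigma_t\ge \frac{e^{2h(\mu)}}{(2\pi e)^d\,t\,\det(\Sigma+\tfrac{1-t}{t}\Id)}$, and makes transparent that the hypothesis $\Ent(\mu\|\gamma)<\infty$ enters exactly as $h(\mu)>-\infty$.

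Two presentational points. First, you use the symbol $\sigma_t$ both for $\lambda_{\min}(\EE[\Gamma_t])$ (as in the lemma) and for the channel noise level $\sqrt{(1-t)/t}$; these must be disambiguated. Second, the MMSE identification deserves one more line: knowing that $\mu_t$ has the \emph{form} of a Gaussian tilt of $\mu$ with covariance $\tfrac{1-t}{t}\Id$ is not by itself enough, since $\EE[\Cov(\mu_t)]$ also depends on the law of the random center $c_t$; that this law coincides with the law of $X+\nu G$ follows (via Bayes) from the constraint that the $Y_1$-marginal is $\mu$. Finally, what you actually invoke is the max-entropy property of Gaussians rather than the entropy--power inequality; the label is cosmetic but worth correcting.
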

\begin{proof}
We will show that for every $0\leq t <1$, $\sigma_t > 0$ and that there exists $c>0$ such that $\sigma_t > \frac{1}{8}$ whenever $t>1-c$. The claim will then follow by continuity of $\sigma_t$. The key to showing this is identity \eqref{eq: variational entropy}, due to which,
$$
\mathrm{Ent}\left(Y_1||\gamma\right) = \frac{1}{2}\int\limits_0^1\frac{\mathrm{Tr}\left(\EE\left[\left(\Gamma_t - \mathrm{I}_d\right)^2\right]\right)}{1-t}dt.
$$
Recall that, by Equation \eqref{gamma_var}, $\Gamma_t = \frac{\mathrm{Cov}\left(Y_1|\mathcal{F}_t\right)}{1-t}$ and observe that, by Proposition \ref{lem: decreaing rank}, if  $\mathrm{Cov}\left(Y_1|\mathcal{F}_s\right)$ is not invertible for some $0 \leq s < 1$ then $\mathrm{Cov}\left(Y_1|\mathcal{F}_t\right)$ is also not invertible for any $t > s$. Under this event, we would have that $\int\limits_s^1\frac{\mathrm{Tr}\left(\left(\Gamma_t - \mathrm{I}_d\right)^2\right)}{1-t}dt = \infty$ which, using the above display, implies that the probability of this event must be zero. Therefore, $\Gamma_t$ is almost surely invertible and $\sigma_t > 0$ for all $t \in [0,1)$.\\
\\
Suppose now that for some $t' \in [0,1]$, $\sigma_{t'} \leq \frac{1}{8}$. By Jensen's inequality, we have
$$\mathrm{Tr}\left(\EE\left[\left(\Gamma_t - \mathrm{I}_d\right)^2\right]\right) \geq \mathrm{Tr}\left(\EE\left[\Gamma_t - \mathrm{I}_d\right]^2\right) \geq (1-\sigma_t)^2 \geq 1 - 2\sigma_t.$$
Since, by Lemma \ref{derivative of gamma}, $\EE\left[\mathrm{Cov}\left(Y_1|\mathcal{F}_t\right)\right]$ is non increasing, for any $t' \leq t \leq t' + \frac{1-t'}{2}$,
$$\sigma_t \leq \frac{\sigma_{t'}(1-t')}{1-t} \leq \frac{1-t'}{8(1- t' - \frac{1-t'}{2})} = \frac{1}{4}.$$

Now, assume by contradiction that there exists a sequence $ t_i \in (0,1)$ such that $\sigma_{t_i} \leq \frac{1}{8}$ and $\lim\limits_{i \to \infty}t_i = 1$. By passing to a subsequence we may assume that $t_{i+1} - t_{i} \geq \frac{1 - t_i}{2}$ for all $i$. The assumption $\Ent(Y_1||\gamma) < \infty$ combined with Equation \eqref{eq: variational entropy} and with the last two displays finally gives
$$\infty  > \int\limits_0^1\frac{\mathrm{Tr}\left(\EE\left[\left(\Gamma_t - \mathrm{I}_d\right)^2\right]\right)}{1-t}dt \geq \int\limits_0^1\frac{1-2\sigma_t}{1-t}dt  \geq \sum\limits_{i=1}^\infty \int\limits_{t_i}^{t_i + \frac{1-t_i}{2}}\frac{1}{2}\frac{1}{1-t}dt \geq \log 2 \sum\limits_{i=1}^\infty \frac{1}{2},$$
which leads to a contradiction and completes the proof.
\end{proof}
\subsection{Proof of Theorem \ref{thm:entropic clt}}
Thanks to the assumption $\mathrm{Ent}\left(Y_1||G\right) < \infty$, an application of Lemma \ref{lem:invertible gamma} gives that $\Gamma_t$ is invertible almost surely, so we may invoke the second bound in Theorem \ref{thm: quant entropic clt} to obtain
\begin{align*}
\mathrm{Ent}( S_n||G)\leq
\int\limits_0^1 \frac{\mathrm{Tr}\left(\EE\left[\Gamma_t^2\right] - \EE\left[\tilde{\Gamma}_t\right]^2\right)}{(1-t)^2}\left(\int\limits_t^1\sigma_s^{-2}ds\right)dt.
\end{align*}
The same lemma also shows that for some $m>0$ one has
$$\int\limits_t^1\sigma_s^{-2}ds \leq \frac{1-t}{m^2}. $$
Therefore, we attain that
\begin{align} \label{eq: integral}
\mathrm{Ent}( S_n||G)\leq
\frac{1}{m^2}\int\limits_0^1 \frac{\mathrm{Tr}\left(\EE\left[\Gamma_t^2\right] - \EE\left[\tilde{\Gamma}_t\right]^2\right)}{1-t}dt.
\end{align}
Next, observe that, by It\^o's isometry,
$$\mathrm{Cov}(X) = \int\limits_0^1\EE\left[\Gamma_t^2\right]dt.$$
Hence, as long as $\mathrm{Cov}(X)$ is finite, $\EE\left[\Gamma_t^2\right]$ is also finite for all $t \in A$ where $[0,1] \setminus A$ is a set of measure $0$. We will use this fact to show that
\begin{equation}\label{eq:pointwise}
\lim\limits_{n \to \infty}\mathrm{Tr}\left(\EE\left[\Gamma_t^2\right] -\EE\left[\tilde{\Gamma_t}\right]^2\right) = 0, ~~ \forall t \in A.
\end{equation}
Indeed, by the law of large numbers, $\tilde{\Gamma}_t$ almost surely converges  to $\sqrt{\EE\left[\Gamma_t^2\right]}$. Since $\left(\Gamma_t^{(i)}\right)^2$ are integrable, we get that the sequence $\frac{1}{n}\sum\limits_{i=1}^n\left(\Gamma_t^{(i)}\right)^2$ is uniformly integrable. We now use the inequality
$$\tilde{\Gamma}_t \preceq \sqrt{\frac{1}{n}\sum\limits_{i=1}^n\left(\Gamma_t^{(i)}\right)^2+\mathrm{I}_d} \preceq \frac{1}{n}\sum\limits_{i=1}^n\left(\Gamma_t^{(i)}\right)^2+\mathrm{I}_d,$$
to deduce that $\tilde{\Gamma_t}$ is uniformly integrable as well.
An application of Vitali's convergence theorem (see \cite{folland99real}, for example) implies \eqref{eq:pointwise}.\\

We now know that the integrand in the right hand side of \eqref{eq: integral} convergence to zero for almost every $t$. It remains to show that the expression converges as an integral, for which we intend to apply the dominated convergence theorem. It thus remains to show that the expression
$$
\frac{\mathrm{Tr}\left(\EE\left[\Gamma_t^2\right] - \EE\left[\tilde{\Gamma}_t\right]^2\right)}{1-t}
$$
is bounded by an integrable function, uniformly in $n$, which would imply that
$$
\lim\limits_{n \to \infty}\mathrm{Ent}(S_n||G) = 0,
$$
and the proof would be complete. To that end, recall that the square root function is concave on positive definite matrices (see e.g., \cite{ando1979concavity}), thus $$\tilde{\Gamma_t} \succeq \frac{1}{n}\sum\limits_{i=1}^n\Gamma_t^{(i)}.$$
It follows that
$$\mathrm{Tr}\left(\EE\left[\Gamma_t^2\right] -\EE\left[\tilde{\Gamma_t}\right]^2\right) \leq \mathrm{Tr}\left(\EE\left[\Gamma_t^2\right] -\EE\left[\Gamma_t\right]^2\right) \leq \mathrm{Tr}\left(\EE\left[\left(\Gamma_t -\mathrm{I}_d\right)^2\right]\right).$$
So we have
\begin{align*}
\frac{1}{m^2}\int\limits_0^1 \frac{\mathrm{Tr}\left(\EE\left[\Gamma_t^2\right] - \EE\left[\tilde{\Gamma}_t\right]^2\right)}{1-t}dt ~& \leq \frac{1}{m^2}\int\limits_0^1\frac{\mathrm{Tr}\left(\EE\left[\left(\Gamma_t - \mathrm{I}_d\right)^2\right]\right)}{1-t}dt \\
& \stackrel{\eqref{eq: variational entropy}}{=} \frac{2}{m^2}\mathrm{Ent}\left(Y_1||\gamma\right)< \infty.
\end{align*}
This completes the proof.

\subsection{Quantitative bounds for log concave random vectors}
In this section, we make the additional assumption that the measure $\mu$ is log concave. Under this assumption, we show how one can obtain explicit convergence rates in the central limit theorem. Our aim is to use the bound in Theorem \ref{thm: quant entropic clt} for which we are required to obtain bounds on the process $\Gamma_t$. We begin by recording several useful facts concerning this process.
\begin{lemma} \label{gamma_properties}
	The process $\Gamma_t$ has the following properties:
	\begin{enumerate}
		\item  If $\mu$ is log concave, then for every $t \in [0,1]$, $\Gamma_t \preceq \frac{1}{t}\mathrm{I}_d$, almost surely.
		\item  If $\mu$ is also $1$-uniformly log concave, then for every $t \in [0,1]$, $\Gamma_t \preceq \mathrm{I}_d$ almost surely.
	\end{enumerate}
\end{lemma}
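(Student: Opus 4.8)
The plan is to read off the law of $\mu_t$ from Proposition~\ref{prop:sigma-evolution} and then apply the Brascamp--Lieb inequality. Since in this section $C_t = \frac{1}{1-t}\Id$, the differential equation $\frac{d}{dt}\Sigma_t = -\Sigma_t C_t^2 \Sigma_t$ of Proposition~\ref{prop:sigma-evolution} becomes $\frac{d}{dt}\Sigma_t^{-1} = \frac{1}{(1-t)^2}\Id$ on the subspace parallel to the affine hull of $\mathrm{supp}(\mu_t)$; together with the boundary condition $\lim_{t\to 0^+}\Sigma_t^{-1} = 0$ this integrates to $\Sigma_t^{-1} = \frac{t}{1-t}\,\Id$ (read as an operator on that subspace; on $\ker A_t$ — which only grows in $t$ by Proposition~\ref{lem: decreaing rank} — the process $\Gamma_t = A_t/(1-t)$ vanishes, so the asserted bound holds there trivially).

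Next I would use the first bullet of Proposition~\ref{prop:sigma-evolution}: on this subspace $\mu_t$ has density proportional to $\rho(x)$ times a Gaussian density with covariance $\Sigma_t$. Writing this density as $e^{-\varphi_t}$, additivity of Hessians gives $\nabla^2\varphi_t = -\nabla^2\log\rho + \Sigma_t^{-1} = -\nabla^2\log\rho + \tfrac{t}{1-t}\Id$. If $\mu$ is log-concave then $-\nabla^2\log\rho \succeq 0$, so $\nabla^2\varphi_t \succeq \tfrac{t}{1-t}\Id$; if $\mu$ is moreover $1$-uniformly log-concave then $-\nabla^2\log\rho \succeq \Id$, so $\nabla^2\varphi_t \succeq \bigl(1+\tfrac{t}{1-t}\bigr)\Id = \tfrac{1}{1-t}\Id$.

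Finally, I would invoke the Brascamp--Lieb inequality (as in the proof of the earlier bound $\tau \le 1 + 9/T$; see \cite{harge2004convex}): a probability measure whose density $e^{-\varphi}$ satisfies $\nabla^2\varphi \succeq \lambda\Id$ has covariance $\preceq \lambda^{-1}\Id$. Applied to $\mu_t$ this yields $A_t = \mathrm{Cov}(\mu_t) \preceq \tfrac{1-t}{t}\Id$ in the log-concave case and $A_t \preceq (1-t)\Id$ in the $1$-uniformly log-concave case; dividing by $1-t$ and recalling $\Gamma_t = \tfrac{A_t}{1-t}$ gives $\Gamma_t \preceq \tfrac{1}{t}\Id$ and $\Gamma_t \preceq \Id$ respectively.

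I do not expect any real obstacle: the lemma is a bookkeeping consequence of Proposition~\ref{prop:sigma-evolution} together with Brascamp--Lieb, in the same spirit as the estimate $\tau\le 1+9/T$ but sharpened by retaining the contribution $-\nabla^2\log\rho$ of the summand's own log-concavity rather than discarding it. The only points needing a line of care are the explicit integration of the matrix ODE (immediate, since $C_t$ is a scalar multiple of the identity so all the matrices commute) and the verification that the argument passes unchanged through each stratum of the construction when $A_t$ degenerates — which it does, because both the density identity of Proposition~\ref{prop:sigma-evolution} and the Brascamp--Lieb inequality are stated within the affine hull of the support.
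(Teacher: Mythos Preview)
Your proposal is correct and follows essentially the same route as the paper's own proof: compute $\Sigma_t^{-1}=\tfrac{t}{1-t}\Id$ from Proposition~\ref{prop:sigma-evolution}, read off the Hessian lower bound for $-\log\rho_t$, and apply Brascamp--Lieb to bound $A_t=\mathrm{Cov}(\mu_t)$. If anything, your write-up is slightly more careful than the paper's in tracking what happens on the degenerate subspace $\ker A_t$.
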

\begin{proof}
  Denote by $\rho_t$ the density of $Y_1|\mathcal{F}_t$ with respect
  to the Lebesgue measure with $\rho := \rho_0$ being the density of
  $\mu$. By Proposition \ref{prop:sigma-evolution} with $C_t =
  \frac{\Id}{1 - t}$, we can calculate the ratio between $\rho_t$ and
  $\rho$. In particular, we have
  \[ \frac{d}{dt} \Sigma_t^{-1} = -\Sigma_t^{-1} \left( \frac{d}{dt} \Sigma_t \right) \Sigma_t^{-1} = \frac{1}{(1 - t)^2} \Id. \]
  Solving this differential equation with the initial condition
  $\Sigma_0^{-1} = 0$, we find that $\Sigma_t^{-1} = \frac{t}{1 - t}
  \Id$.

  Since the ratio between $\rho_t$ and $\rho$ is proportional to the
  density of a Gaussian with covariance $\Sigma_t$, we thus have
  \[ -\nabla^2\log(\rho_t) = -\nabla^2\log(\rho) + \frac{t}{1-t}\mathrm{I}_d. \]
  Now, if $\mu$ is log concave then $Y_1|\mathcal{F}_t$ is almost
  surely $\frac{t}{1-t}$-uniformly log-concave. By the Brascamp-Lieb
  inequality (as in \cite{harge2004convex}) we get
  $\mathrm{Cov}\left(Y_1|\mathcal{F}_t\right) \preceq
  \frac{1-t}{t}\mathrm{I}_d$ and, using \eqref{gamma_var},
  \[ \Gamma_t \preceq \frac{1}{t}\mathrm{I}_d. \]
  If $\mu$ is also $1$-uniformly log-concave then $-\nabla^2\log(\rho)
  \succeq \mathrm{I}_d$ and almost surely
  \[ -\nabla^2\log(\rho_t) \succeq \frac{1}{1-t}\mathrm{I}_d. \]
  By the same argument this implies
  \[ \Gamma_t \preceq \mathrm{I}_d. \]
\end{proof}

The relative entropy to the Gaussian of a log concave measure with non-degenerate covariance structure is finite (it is even universally bounded, see \cite{marsiglietti2018lower}). Thus, by Lemma \ref{lem:invertible gamma}, it follows that $\Gamma_t$ is invertible almost surely. This allows us to invoke the first bound of Theorem \ref{thm: quant entropic clt},
\begin{equation} \label{entropy integral bonund}
\mathrm{Ent}(S_n|| G)\leq \frac{1}{n}\int\limits_0^1\frac{\EE\left[\mathrm{Tr}\left(\left(\Gamma_t^2 - \EE\left[\Gamma_t^2\right]\right)^2\right)\right]}{(1-t)^2\sigma_t^2}\left(\int\limits_t^1\sigma_s^{-2}ds\right)dt.
\end{equation}
Attaining an upper bound on the right hand side amounts to a concentration estimate for the process $\Gamma_t^2$ and a lower bound on $\sigma_t$. These two tasks are the objective of the following two lemmas.
\begin{lemma} \label{trace bounds}
If $\mu$ is log concave and isotropic then for any $t\in[0,1)$,
$$\mathrm{Tr}\left(\EE\left[\left(\Gamma_t^2 - \EE\left[\Gamma_t^2\right]\right)^2\right]\right) \leq \frac{1-t}{t^2}\left(\frac{d(1+t)}{t^2} + 2\EE\left[\norm{v_t}^2\right]\right),$$
and
$$\mathrm{Tr}\left(\EE\left[\left(\Gamma_t^2 - \EE\left[\Gamma_t^2\right]\right)^2\right]\right) \leq C\frac{d^4}{(1-t)^4}$$
for a universal constant $C > 0$.
\end{lemma}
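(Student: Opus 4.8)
The plan is to establish the two stated inequalities separately. The second one (the crude $d^4$ bound) is the easier: it comes from dominating the matrix variance by a second moment and then using elementary moment estimates for isotropic log-concave vectors. The first one requires a case split according to whether $t$ is bounded away from $1$ or close to $1$, because near $t=1$ one must use that $\Gamma_t$ concentrates around $\mathrm{I_d}$, not merely that it is bounded.

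\smallskip

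For the second bound I would start from $\mathrm{Var}(\Gamma_t^2) := \EE[\Gamma_t^4]-(\EE[\Gamma_t^2])^2 \preceq \EE[\Gamma_t^4]$, so that $\mathrm{Tr}\big(\EE[(\Gamma_t^2-\EE[\Gamma_t^2])^2]\big) \le \EE[\mathrm{Tr}(\Gamma_t^4)]$. Since $\Gamma_t \succeq 0$ we have $\mathrm{Tr}(\Gamma_t^4) \le \mathrm{Tr}(\Gamma_t^2)^2$, and recalling $\Gamma_t = A_t/(1-t)$ with $A_t = \mathrm{Cov}(Y_1\mid\mathcal F_t) \succeq 0$ (by \eqref{gamma_var} and \eqref{conditional at}) this gives $\mathrm{Tr}(\Gamma_t^4) \le (1-t)^{-4}\,\mathrm{Tr}(A_t)^4$. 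Now $\mathrm{Tr}(A_t) = \EE[\norm{Y_1 - a_t}^2\mid\mathcal F_t]$, so conditional Jensen yields $\EE[\mathrm{Tr}(A_t)^4] \le \EE[\norm{Y_1 - a_t}^8]$; since $a_t = \EE[Y_1\mid\mathcal F_t]$ and $Y_1\sim\mu$, the triangle inequality bounds this by a constant times $\EE_\mu[\norm X^8]$. Finally, expanding $\norm X^8 = \sum_{i,j,k,l} X_i^2 X_j^2 X_k^2 X_l^2$, using that each coordinate marginal of an isotropic log-concave measure is one-dimensional with unit variance and hence satisfies $\EE[X_i^8] \le C$ (moment comparison for one-dimensional log-concave laws), and applying H\"older, one gets $\EE_\mu[\norm X^8] \le C d^4$. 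This produces the second inequality.

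\smallskip

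For the first bound, when $1-t \ge \tfrac14$ I would use a matrix Popoviciu-type estimate: by Lemma \ref{gamma_properties}(1), $0 \preceq \Gamma_t^2 \preceq t^{-2}\mathrm{I_d}$ almost surely, hence $\Gamma_t^4 \preceq t^{-2}\Gamma_t^2$, and therefore $\mathrm{Var}(\Gamma_t^2) \preceq \EE[\Gamma_t^2]\big(t^{-2}\mathrm{I_d} - \EE[\Gamma_t^2]\big) \preceq \tfrac1{4t^4}\mathrm{I_d}$, giving $\mathrm{Tr}(\mathrm{Var}(\Gamma_t^2)) \le \tfrac{d}{4t^4} \le \tfrac{(1-t)d}{t^4} \le \tfrac{1-t}{t^2}\big(\tfrac{d}{t^2}+2\EE\norm{v_t}^2\big)$. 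For $t$ close to $1$ the Popoviciu bound no longer decays, so instead I would write $\mathrm{Var}(\Gamma_t^2) \preceq \EE[(\Gamma_t^2-\mathrm{I_d})^2]$ (the matrix variance is dominated by the second moment about any fixed point), factor $\Gamma_t^2 - \mathrm{I_d} = (\Gamma_t-\mathrm{I_d})(\Gamma_t+\mathrm{I_d})$ using that the two factors commute, and pull out $\norm{\Gamma_t+\mathrm{I_d}}_{op}^2 \le (1+\tfrac1t)^2$ to reduce matters to $\EE[\mathrm{Tr}((\Gamma_t-\mathrm{I_d})^2)]$. This last quantity equals $\EE[\mathrm{Tr}(\Gamma_t^2)] - 2\EE[\mathrm{Tr}(\Gamma_t)] + d$; using log-concavity in the form $\mathrm{Tr}(\Gamma_t^2) \le \norm{\Gamma_t}_{op}\mathrm{Tr}(\Gamma_t) \le t^{-1}\mathrm{Tr}(\Gamma_t)$ together with the identity $\EE[\mathrm{Tr}(\Gamma_t)] = d - (1-t)\EE\norm{v_t}^2$ from Lemma \ref{lem:gamma representation} specialized to the isotropic case, one arrives at $\EE[\mathrm{Tr}((\Gamma_t-\mathrm{I_d})^2)] \le \tfrac{1-t}{t}\big(d + 2\EE\norm{v_t}^2\big)$. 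Combining this with the prefactor (which, together with the $t^{-1}$, is close to $4$ and $1$ respectively when $t$ is near $1$) and with the $t\le\tfrac34$ case gives a bound of the claimed shape $\lesssim \tfrac{1-t}{t^2}\big(\tfrac{d}{t^2}+2\EE\norm{v_t}^2\big)$.

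\smallskip

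The main obstacle is precisely the regime $t\to 1$: there the boundedness of $\Gamma_t$ alone is useless (Popoviciu gives no decay), and one genuinely has to cash in the concentration of $\Gamma_t$ around $\mathrm{I_d}$, which forces the argument through the exact trace identity of Lemma \ref{lem:gamma representation}; keeping the numerical constants coming from $\norm{\Gamma_t+\mathrm{I_d}}_{op}^2$ and from the $t^{-1}$ bound on $\norm{\Gamma_t}_{op}$ under control — and choosing the threshold between the two regimes appropriately — is where the real bookkeeping lies. Everything else (the moment computation, the matrix Popoviciu step, the commuting-factor manipulation) is routine.
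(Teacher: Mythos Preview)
Your argument for the second inequality is essentially the same as the paper's: bound the variance by a second moment, trace this back to $\EE\norm{Y_1}^8$, and use that this is $O(d^4)$ for isotropic log-concave measures. The paper cites Paouris directly rather than doing the $8$th-moment computation by hand, and goes through $\EE[Y_1^{\otimes 2}\mid\mathcal F_t]$ rather than $\mathrm{Tr}(A_t)$, but the content is the same.

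For the first inequality your approach is correct (up to constants) but noticeably more complicated than the paper's. The paper avoids the case split entirely by centering at $t^{-2}\mathrm{I_d}$ rather than at $\mathrm{I_d}$: since the matrix variance is dominated by the second moment about any scalar multiple of the identity,
\[
\mathrm{Tr}\bigl(\EE[(\Gamma_t^2-\EE[\Gamma_t^2])^2]\bigr)\le \mathrm{Tr}\bigl(\EE[(\Gamma_t^2-t^{-2}\mathrm{I_d})^2]\bigr)=t^{-4}\,\mathrm{Tr}\bigl(\EE[(\mathrm{I_d}-t^2\Gamma_t^2)^2]\bigr).
\]
Because $0\preceq t^2\Gamma_t^2\preceq\mathrm{I_d}$, the matrix $\mathrm{I_d}-t^2\Gamma_t^2$ has spectrum in $[0,1]$ and hence its square is dominated by itself, so the right side is at most $t^{-4}(d-t^2\,\EE[\mathrm{Tr}(\Gamma_t^2)])$. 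A \emph{lower} bound $\EE[\mathrm{Tr}(\Gamma_t^2)]\ge d-2(1-t)\EE\norm{v_t}^2$ (via $\mathrm{Tr}(\Gamma_t^2)\ge d^{-1}\mathrm{Tr}(\Gamma_t)^2$ and Lemma~\ref{lem:gamma representation}) then gives the stated estimate in one stroke for every $t\in(0,1)$. The point is that centering at the almost-sure upper bound $t^{-2}\mathrm{I_d}$ simultaneously produces the $t^{-4}$ factor needed near $t=0$ and the $(1-t)$ decay needed near $t=1$; your Popoviciu bound captures the former but not the latter, forcing you into a separate argument for large $t$. Your route still works, but the bookkeeping you flag (controlling $(1+t^{-1})^2$ and matching constants across the threshold) is an artifact of the split and only delivers the bound up to a multiplicative constant, whereas the paper's argument gives the displayed expression directly.
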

\begin{proof}
	The isotropicity of $\mu$, used in conjunction with the formula given in Lemma \ref{lem:gamma representation}, yields
	$$\mathrm{Tr}\left(\EE\left[\Gamma_t^2\right]\right) \geq \frac{1}{d} \mathrm{Tr}\left(\EE\left[\Gamma_t\right]\right)^2 \geq d - 2(1-t)\EE\left[\norm{v_t}^2\right],$$
	where the first inequality follows by convexity.
	Since $\mu$ is log concave, Lemma \ref{gamma_properties} ensures that, almost surely, $\Gamma_t \preceq \frac{1}{t}\mathrm{I}_d$. Therefore,
	\begin{align*}
	\mathrm{Tr}\left(\EE\left[\left(\Gamma_t^2 - \EE\left[\Gamma_t^2\right]\right)^2\right]\right) &\leq \mathrm{Tr}\left(\EE\left[\left(\Gamma_t^2 - \frac{1}{t^2}\mathrm{I}_d\right)^2\right]\right) \\
	& = \frac{1}{t^4} \mathrm{Tr}\left(\EE\left[\left(\mathrm{I}_d - t^2 \Gamma_t^2 \right)^2 \right ] \right ) \\
	& \leq \frac{1}{t^4}\mathrm{Tr}\left(\EE\left[\mathrm{I}_d - t^2 \Gamma_t^2\right]\right)\\
	&\leq \frac{1-t}{t^2}\left(\frac{d(1+t)}{t^2} + 2\EE\left[\norm{v_t}^2\right]\right).
	\end{align*}
	Which proves the first bound. Towards the second bound, we use \eqref{gamma_var} to write
	$$\Gamma_t^2\preceq \frac{1}{(1-t)^2}\EE\left[Y_1^{\otimes 2}|\mathcal{F}_t\right]^2.$$
	So,
	$$\EE\left[\norm{\Gamma_t^2}_{HS}^2\right] \ \leq  \frac{1}{(1-t)^4}\EE\left[\norm{\norm{Y_1}^2Y_1^{\otimes 2}}_{HS}^2\right] \leq\frac{1}{(1-t)^4}\EE\left[\norm{Y_1}^8\right].$$
	For an isotropic log concave measure, the expression $\EE\left[\norm{Y_1}^8\right]$ is bounded from above by
	$Cd^4$ for a universal constant $C > 0$ (see \cite{paouris2006concentration}). Thus,
	$$\mathrm{Tr}\left(\EE\left[\left(\Gamma_t^2 - \EE\left[\Gamma_t^2\right]\right)^2\right]\right) = \EE\left[\norm{\Gamma_t^2 - \EE\left[\Gamma_t^2\right]}_{HS}^2\right]\leq 2\EE\left[\norm{\Gamma_t^2}_{HS}^2\right] \leq C\frac{d^4}{(1-t)^4}.$$
\end{proof}
\begin{lemma} \label{sigma bounds}
	Suppose that $\mu$ is log concave and isotropic, then there exists a universal constant $1 > c > 0$ such that
	\begin{enumerate}
		\item For any, $t \in [0,\frac{c}{d^2}]$, $\sigma_t \geq \frac{1}{2}$.
		\item For any, $t \in [\frac{c}{d^2}, 1]$, $\sigma_t \geq \frac{c}{t d^2}$.
	\end{enumerate}
\end{lemma}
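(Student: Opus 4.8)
The plan is to track $\sigma_t$ through the matrix $B_t := \EE[\Cov(Y_1|\mathcal{F}_t)]$. Since $1-t$ is deterministic, \eqref{gamma_var} gives $\sigma_t = \lambda_{\min}(\EE[\Gamma_t]) = (1-t)^{-1}\lambda_{\min}(B_t)$, so everything reduces to lower bounds on $\lambda_{\min}(B_t)$. The two structural inputs are: the initial condition $B_0 = \Sigma = \Id$ (this is exactly where isotropy is used) together with $\frac{d}{dt}B_t = -\EE[\Gamma_t^2] \preceq 0$ from Lemma \ref{derivative of gamma}, so $B_t$ is non-increasing in the PSD order; and the log-concavity bound $\Gamma_t \preceq \tfrac1t \Id$, i.e. $A_t := \Cov(Y_1|\mathcal{F}_t) \preceq \tfrac{1-t}{t}\Id$ almost surely, from Lemma \ref{gamma_properties}(1).

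For the first assertion (small $t$) I would use a crude perturbative estimate. Since $A_t \preceq \EE[Y_1^{\otimes 2}|\mathcal{F}_t]$ we have $\norm{A_t}_{op} \le \EE[\norm{Y_1}^2|\mathcal{F}_t]$, hence by conditional Jensen $\EE\norm{A_t}_{op}^2 \le \EE\norm{Y_1}^4 \le Cd^2$, the last inequality being the standard moment bound for isotropic log-concave vectors (as in \cite{paouris2006concentration}; alternatively $\EE\norm{Y_1}^4 \le (\EE\norm{Y_1}^8)^{1/2} \le Cd^2$ using the bound already invoked in Lemma \ref{trace bounds}). Therefore, for $t \le \tfrac12$,
\[ \norm{\EE[\Gamma_t^2]}_{op} \le \EE\norm{\Gamma_t}_{op}^2 = (1-t)^{-2}\EE\norm{A_t}_{op}^2 \le 4Cd^2, \]
so $\lambda_{\min}(B_t) \ge 1 - \int_0^t \norm{\EE[\Gamma_s^2]}_{op}\,ds \ge 1 - 4Cd^2 t \ge \tfrac12$ whenever $t \le (8Cd^2)^{-1}$. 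With $c := (8C)^{-1}$ this gives $\sigma_t \ge \lambda_{\min}(B_t) \ge \tfrac12$ on $[0,c/d^2]$.

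For the second assertion I would convert the log-concavity bound into a matrix differential inequality: from $A_t \preceq \tfrac{1-t}{t}\Id$ one gets $A_t^2 \preceq \tfrac{1-t}{t}A_t$, hence $\EE[\Gamma_t^2] = (1-t)^{-2}\EE[A_t^2] \preceq \tfrac{1}{t(1-t)}B_t$, and so $\frac{d}{dt}B_t \succeq -\tfrac{1}{t(1-t)}B_t$. Setting $N_t := \exp\!\big(\int_{t_0}^t \tfrac{ds}{s(1-s)}\big)B_t$ with $t_0 := c/d^2$, one checks $\frac{d}{dt}N_t \succeq 0$; then for every fixed unit vector $v$ the scalar $\langle v, N_t v\rangle$ is non-decreasing, so taking the infimum over $v$ gives $\lambda_{\min}(N_t) \ge \lambda_{\min}(N_{t_0}) = \lambda_{\min}(B_{t_0}) \ge \tfrac12$. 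Since $\int_{t_0}^t \tfrac{ds}{s(1-s)} = \ln\tfrac{t(1-t_0)}{t_0(1-t)}$, this unwinds to $\lambda_{\min}(B_t) \ge \tfrac12 \cdot \tfrac{t_0(1-t)}{t(1-t_0)}$, whence $\sigma_t = (1-t)^{-1}\lambda_{\min}(B_t) \ge \tfrac{t_0}{2t(1-t_0)} \ge \tfrac{c}{2td^2}$ for $t \in [c/d^2,1)$. Relabelling the universal constant delivers both bounds as stated. The main obstacle is this last step: a matrix differential inequality does not pass to its eigenvalues, so one cannot naively apply Gr\"onwall to $\lambda_{\min}(B_t)$; the integrating-factor device (producing a genuinely PSD-monotone $N_t$ and exploiting that monotonicity of $v\mapsto\langle v,N_t v\rangle$ for every $v$ forces monotonicity of $\lambda_{\min}$) is what makes the argument go through, and it depends crucially on the pointwise bound $A_t \preceq \tfrac{1-t}{t}\Id$.
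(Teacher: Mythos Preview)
Your proposal is correct and follows essentially the same route as the paper: Part~1 uses the Paouris moment bound $\EE\|Y_1\|^4 \le Cd^2$ to control $\EE[\Gamma_t^2]$ for small $t$, and Part~2 turns the log-concavity bound $\Gamma_t \preceq \tfrac{1}{t}\Id$ into a matrix differential inequality handled by a Gr\"onwall-type argument. The only cosmetic difference is that the paper works directly with $\EE[\Gamma_t]$ (obtaining $\tfrac{d}{dt}\EE[\Gamma_t] \succeq -\tfrac{1}{t}\EE[\Gamma_t]$ and applying Gr\"onwall) while you work with $B_t = (1-t)\EE[\Gamma_t]$ and are more explicit about the integrating factor; these are equivalent since the scalar coefficient allows Gr\"onwall to be applied coordinate-wise in any fixed direction $v$.
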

\begin{proof}
	By Lemma \ref{derivative of gamma}, we have
	$$\frac{d}{dt}\EE\left[\mathrm{Cov}(Y_1|\mathcal{F}_t)\right] =  -\EE\left[\Gamma_t^2\right] \stackrel{\eqref{gamma_var}}{=} - \frac{\EE\left[\mathrm{Cov}\left(Y_1|\mathcal{F}_t\right)^2\right]}{(1-t)^2}.$$
	Moreover, by convexity,
	$$\EE\left[\mathrm{Cov}\left(Y_1|\mathcal{F}_t\right)^2\right] \preceq
	 \EE\left[\EE\left[Y_1^{\otimes 2} |\mathcal{F}_t\right]^2\right] \preceq
	  \EE\left[\norm{Y_1}^4\right]\mathrm{I}_d.$$
	It is known (see \cite{paouris2006concentration}) then when $\mu$ is log concave and isotropic there exists a universal constant $C  > 0$ such that
	$$\EE\left[\norm{Y_1}^4\right] \leq Cd^2.$$
	Consequently, $\frac{d}{dt}\EE\left[\mathrm{Cov}(Y_1|\mathcal{F}_t)\right] \succeq -\frac{Cd^2}{(1-t)^2}\mathrm{I}_d$, and since $\mathrm{Cov}(Y_1|\mathcal{F}_0) = \mathrm{I}_d,$
	$$\EE\left[\mathrm{Cov}(Y_1|\mathcal{F}_t)\right] \succeq\left(1 - Cd^2\int\limits_0^t\frac{1}{(1-s)^2}ds\right)\mathrm{I}_d = \left(1 - \frac{Cd^2t}{1-t}\right)\mathrm{I}_d.$$
	By increasing the value of $C$, we may legitimately assume that $\frac{1}{Cd^2}\leq 1$, thus for any $t \in [0, \frac{1}{3Cd^2}]$ we get that $$\EE\left[\mathrm{Cov}(Y_1|\mathcal{F}_t)\right] \succeq \frac{1}{2}\mathrm{I}_d,$$ which implies $\sigma_t \geq \frac{1}{2}$ and completes the first part of the lemma.
	In order to prove the second part, we first write
	\begin{align} \label{gamma derivative}
	\frac{d}{dt}\EE\left[\Gamma_t\right] &= \frac{d}{dt}\frac{\EE\left[\mathrm{Cov}(Y_1|\mathcal{F}_t)\right]}{1-t} \stackrel{\mbox{ \tiny (Lemma \ref{derivative of gamma}) }}{=} \frac{\EE\left[\mathrm{Cov}(Y_1|\mathcal{F}_t)\right] - (1-t)\EE\left[\Gamma_t^2\right]}{(1-t)^2}
	= \frac{\EE\left[\Gamma_t\right] - \EE\left[\Gamma_t^2\right]}{1-t}.
	\end{align}
	Since, by Lemma \ref{gamma_properties}, $\Gamma_t \preceq \frac{1}{t}\mathrm{I}_d$, we have the bound
	$$\frac{\EE\left[\Gamma_t\right] - \EE\left[\Gamma_t^2\right]}{1-t}
	\succeq \frac{1 - \frac{1}{t}}{1-t}\EE\left[\Gamma_t\right] = - \frac{1}{t} \EE\left[\Gamma_t\right].$$
	Now, consider the differential equation $f'(t) = \frac{- f(t)}{t}$, $f\left(\frac{1}{3Cd^2}\right) = \frac{1}{2}$. Its unique solution is $f(t) = \frac{1}{6Cd^2t}$. Thus, Gromwall's inequality shows that $\sigma_t \geq \frac{1}{6Cd^2t}$, which concludes the proof.
\end{proof}
\begin{proof}[Proof of Theorem \ref{thm:ent-log-concave}]

Our objective is to bound from above the right hand side of Equation \eqref{entropy integral bonund}. As a consequence of Lemma \ref{sigma bounds}, we have that for any $t \in [0,1)$,
$$\int\limits_t^1\sigma_s^{-2}ds \leq Cd^4(1-t),$$
for some universal constant $C > 0$. It follows that the integral in \eqref{entropy integral bonund} admits the bound
$$\int\limits_0^{1}\frac{\EE\left[\mathrm{Tr}\left(\left(\Gamma_t^2 - \EE\left[\Gamma_t^2\right]\right)^2\right)\right]}{(1-t)^2\sigma_t^2}\left(\int\limits_t^1\sigma_s^{-2}ds\right)dt \leq Cd^4\int\limits_0^{1}\frac{\EE\left[\mathrm{Tr}\left(\left(\Gamma_t^2 - \EE\left[\Gamma_t^2\right]\right)^2\right)\right]}{(1-t)\sigma_t^2}dt.$$
Next, there exists a universal constant $C' > 0$ such that
$$Cd^4\int\limits_0^{cd^{-2}}\frac{\EE\left[\mathrm{Tr}\left(\left(\Gamma_t^2 - \EE\left[\Gamma_t^2\right]\right)^2\right)\right]}{(1-t)\sigma_t^2}dt \leq C'\int\limits_0^{cd^{-2}}  \frac{d^8}{(1-t)^5}dt \leq C'd^{8},$$
where we have used the second bound of Lemma \ref{trace bounds} and the first bound of Lemma \ref{sigma bounds}.
Also, by applying the second bound of Lemma \ref{sigma bounds} when $t \in [cd^{-2}, d^{-1}]$ we get
$$Cd^4\int\limits_{cd^{-2}}^{d^{-1}}\frac{\EE\left[\mathrm{Tr}\left(\left(\Gamma_t^2 - \EE\left[\Gamma_t^2\right]\right)^2\right)\right]}{(1-t)\sigma_t^2}dt \leq C'\int\limits_{cd^{-2}}^{d^{-1}} \frac{d^{12}t^2}{(1-t)^5}dt \leq C'd^{9}.$$
Finally, when $t > d^{-1}$, we have
\begin{align*}
Cd^4\int\limits_{d^{-1}}^{1}\frac{\EE\left[\mathrm{Tr}\left(\left(\Gamma_t^2 - \EE\left[\Gamma_t^2\right]\right)^2\right)\right]}{(1-t)\sigma_t^2}dt  &\leq C'd^8\int\limits_{d^{-1}}^1\frac{t^2\EE\left[\mathrm{Tr}\left(\left(\Gamma_t^2 - \EE\left[\Gamma_t^2\right]\right)^2\right)\right]}{1-t}dt \\
&\leq 2C'd^9\int\limits_{d^{-1}}^1 \left ( \frac{1}{t^2} + \EE\left[\norm{v_t}^2\right] \right )dt\\
 & \stackrel{\eqref{variational entropy}}{\leq} 4C'd^{10}(1 + \mathrm{Ent}(Y_1||G)),
\end{align*}
where the first inequality uses Lemma \ref{sigma bounds} and the second one uses Lemma \ref{trace bounds}. This establishes 
 \[ \Ent(S_n||G) \leq \frac{C d^{10}(1 +  \Ent(Y_1||G))}{n}. \]
\end{proof}
Finally, we derive an improved bound for the case of $1$-uniformly log concave measures, based on the following estimates.
\begin{lemma} \label{lem:uniformly}
	Suppose that $\mu$ is $1$-uniformly log concave, then for every $t \in [0,1)$
	\begin{enumerate}
		\item $\mathrm{Tr}\left(\EE\left[\left(\Gamma_t^2 - \EE\left[\Gamma_t^2\right]\right)^2\right]\right) \leq 2(1-t)\left(d - \mathrm{Tr}\left(\Sigma\right) + \EE\left[\norm{v_t}^2\right]\right).$
		\item $\sigma_t \geq \sigma_0$.
	\end{enumerate}
\end{lemma}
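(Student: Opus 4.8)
The plan is to derive both parts from a single structural input: by Lemma~\ref{gamma_properties}(2), $1$-uniform log-concavity of $\mu$ forces $0 \preceq \Gamma_t \preceq \mathrm{I_d}$ almost surely for every $t \in [0,1)$. Everything else follows by combining this spectral bound with the ODE $\frac{d}{dt}\EE[\Gamma_t] = \frac{\EE[\Gamma_t] - \EE[\Gamma_t^2]}{1-t}$ from \eqref{gamma derivative} and the trace identity $\mathrm{Tr}(\EE[\Gamma_t]) = d - (1-t)\big(d - \mathrm{Tr}(\Sigma) + \EE[\norm{v_t}^2]\big)$ from Lemma~\ref{lem:gamma representation}.

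For the first bound I would begin by using that, in the Hilbert space of matrices equipped with the $HS$ inner product, the deterministic matrix $\EE[\Gamma_t^2]$ minimizes $B \mapsto \EE[\norm{\Gamma_t^2 - B}_{HS}^2]$; testing against $B = \mathrm{I_d}$ gives
$$\mathrm{Tr}\left(\EE\left[\left(\Gamma_t^2 - \EE[\Gamma_t^2]\right)^2\right]\right) \leq \EE\left[\mathrm{Tr}\left(\left(\mathrm{I_d} - \Gamma_t^2\right)^2\right)\right].$$
Since $0 \preceq \Gamma_t^2 \preceq \mathrm{I_d}$, the eigenvalues of $\mathrm{I_d} - \Gamma_t^2$ lie in $[0,1]$, so $(\mathrm{I_d}-\Gamma_t^2)^2 \preceq \mathrm{I_d}-\Gamma_t^2$ and the right-hand side is at most $d - \mathrm{Tr}(\EE[\Gamma_t^2])$. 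To bound this last quantity I would use the scalar inequality $\lambda(1-\lambda) \leq 1-\lambda$ for $\lambda \in [0,1]$, i.e. $\Gamma_t - \Gamma_t^2 = \Gamma_t(\mathrm{I_d}-\Gamma_t) \preceq \mathrm{I_d}-\Gamma_t$; taking traces and expectations yields $\mathrm{Tr}(\EE[\Gamma_t^2]) \geq 2\mathrm{Tr}(\EE[\Gamma_t]) - d$, hence $d - \mathrm{Tr}(\EE[\Gamma_t^2]) \leq 2\big(d - \mathrm{Tr}(\EE[\Gamma_t])\big)$. Plugging in Lemma~\ref{lem:gamma representation} gives exactly $2(1-t)\big(d - \mathrm{Tr}(\Sigma) + \EE[\norm{v_t}^2]\big)$, which is the claimed bound.

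For the second bound, note that $\Gamma_0 = A_0 = \mathrm{Cov}(Y_1 \mid \mathcal{F}_0) = \Sigma$ is deterministic, so $\sigma_0 = \lambda_{\min}(\Sigma)$ and it suffices to prove $\EE[\Gamma_t] \succeq \Sigma$ for all $t \in [0,1)$. From \eqref{gamma derivative}, $\frac{d}{dt}\EE[\Gamma_t] = \frac{\EE[\Gamma_t] - \EE[\Gamma_t^2]}{1-t}$, and $\Gamma_t \preceq \mathrm{I_d}$ gives $\Gamma_t^2 \preceq \Gamma_t$ pointwise, hence $\EE[\Gamma_t^2] \preceq \EE[\Gamma_t]$; since $1-t > 0$ we conclude $\frac{d}{dt}\EE[\Gamma_t] \succeq 0$. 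Testing against a fixed vector $v$, the scalar function $t \mapsto \langle v, \EE[\Gamma_t] v\rangle$ is non-decreasing, so $\EE[\Gamma_t] \succeq \EE[\Gamma_0] = \Sigma$, and therefore $\sigma_t = \lambda_{\min}(\EE[\Gamma_t]) \geq \lambda_{\min}(\Sigma) = \sigma_0$.

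I do not expect a serious obstacle: both parts are short consequences of the spectral confinement $0 \preceq \Gamma_t \preceq \mathrm{I_d}$ together with already-established identities. The only points requiring a little care are recognizing, in part~1, that replacing the random centering $\EE[\Gamma_t^2]$ by the convenient deterministic choice $\mathrm{I_d}$ only increases the $HS$-discrepancy, and, in part~2, justifying that a matrix-valued ODE with positive semidefinite derivative has a solution that is monotone in the Loewner order.
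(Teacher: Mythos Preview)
Your proof is correct and follows essentially the same route as the paper: both parts rest on $0 \preceq \Gamma_t \preceq \mathrm{I_d}$ from Lemma~\ref{gamma_properties}, the variance-minimization step $\mathrm{Tr}(\EE[(\Gamma_t^2-\EE[\Gamma_t^2])^2]) \leq \mathrm{Tr}(\EE[(\mathrm{I_d}-\Gamma_t^2)^2]) \leq d - \mathrm{Tr}(\EE[\Gamma_t^2])$, the trace identity from Lemma~\ref{lem:gamma representation}, and the ODE~\eqref{gamma derivative} for part~2. The only difference is in how you bound $d-\mathrm{Tr}(\EE[\Gamma_t^2])$: the paper uses $\mathrm{Tr}(\EE[\Gamma_t^2]) \geq \tfrac{1}{d}\mathrm{Tr}(\EE[\Gamma_t])^2$ (Jensen plus Cauchy--Schwarz), while you use the pointwise inequality $\lambda^2 \geq 2\lambda - 1$, giving $\mathrm{Tr}(\EE[\Gamma_t^2]) \geq 2\mathrm{Tr}(\EE[\Gamma_t]) - d$ directly; your step is slightly more elementary and yields the same final bound.
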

\begin{proof}
	By Lemma \ref{gamma_properties}, we have that $\Gamma_t \preceq \mathrm{I}_d$ almost surely. Using this together with the identity given by Lemma \ref{lem:gamma representation}, and proceeding in similar fashion to Lemma \ref{trace bounds} we obtain
	$$\mathrm{Tr}\left(\EE\left[\Gamma_t^2\right]\right) \geq \frac{1}{d} \mathrm{Tr}\left(\EE\left[\Gamma_t\right]\right)^2 \geq d - 2(1-t)\left(d - \mathrm{Tr}\left(\Sigma\right)+\EE\left[\norm{v_t}^2\right]\right),$$
	and
	\begin{align*}
	\mathrm{Tr}\left(\EE\left[\left(\Gamma_t^2 - \EE\left[\Gamma_t^2\right]\right)^2\right]\right) &\leq \mathrm{Tr}\left(\EE\left[\left(\Gamma_t^2 - \mathrm{I}_d\right)^2\right]\right) \leq \mathrm{Tr}\left(\EE\left[\mathrm{I}_d - \Gamma_t^2\right]\right)\\
	&\leq 2(1-t)\left(d - \mathrm{Tr}\left(\Sigma\right) + \EE\left[\norm{v_t}^2\right]\right).
	\end{align*}
	Also, recalling \eqref{gamma derivative} and since $\Gamma_t \preceq \mathrm{I}_d$ we get
	$$\frac{d}{dt}\EE\left[\Gamma_t\right] = \frac{\EE\left[\Gamma_t\right] - \EE\left[\Gamma_t^2\right]}{1-t} \geq 0,$$
	which shows that $\sigma_t$ is bounded from below by a non-decreasing function and so $\sigma_t \geq \sigma_0$ which is the minimal eigenvalue of $\Sigma$.
\end{proof}
\begin{proof}[Proof of Theorem \ref{thm:ent-strong-log-concave}]
	Plugging the bounds given in Lemma \ref{lem:uniformly} into Equation \eqref{entropy integral bonund} yields
	\begin{align*}
	\mathrm{Ent}(S_n||G) &\leq \frac{1}{n}\int\limits_0^1\frac{\EE\left[\mathrm{Tr}\left(\left(\Gamma_t^2 - \EE\left[\Gamma_t^2\right]\right)^2\right)\right]}{(1-t)^2\sigma_t^2}\left(\int\limits_t^1\sigma_s^{-2}ds\right)dt \\
	&\leq \frac{2\left(d + \int\limits_0^1\EE\left[\norm{v_t}^2\right]dt\right)}{\sigma_0^4n} \stackrel{\eqref{variational entropy}}{=} \frac{2\left(d + 2\mathrm{Ent}\left(X||\gamma\right)\right)}{\sigma_0^4n},
	\end{align*}
	which completes the proof.
\end{proof}

\bibliographystyle{plain}
\bibliography{bib}

\begin{thebibliography}{10}

\bibitem{ando1979concavity}
T.~Ando.
\newblock Concavity of certain maps on positive definite matrices and
  applications to {H}adamard products.
\newblock {\em Linear Algebra Appl.}, 26:203--241, 1979.

\bibitem{anttila2003central}
Milla Anttila, Keith Ball, and Irini Perissinaki.
\newblock The central limit problem for convex bodies.
\newblock {\em Trans. Amer. Math. Soc.}, 355(12):4723--4735, 2003.

\bibitem{artstein2004rate}
Shiri Artstein, Keith~M. Ball, Franck Barthe, and Assaf Naor.
\newblock On the rate of convergence in the entropic central limit theorem.
\newblock {\em Probab. Theory Related Fields}, 129(3):381--390, 2004.

\bibitem{ball2003entropy}
Keith Ball, Franck Barthe, and Assaf Naor.
\newblock Entropy jumps in the presence of a spectral gap.
\newblock {\em Duke Math. J.}, 119(1):41--63, 2003.

\bibitem{ball2012entropy}
Keith Ball and Van~Hoang Nguyen.
\newblock Entropy jumps for isotropic log-concave random vectors and spectral
  gap.
\newblock {\em Studia Math.}, 213(1):81--96, 2012.

\bibitem{barron1986entropy}
Andrew~R. Barron.
\newblock Entropy and the central limit theorem.
\newblock {\em Ann. Probab.}, 14(1):336--342, 1986.

\bibitem{bentkus2005lyapunov}
V.~Bentkus.
\newblock A {L}yapunov type bound in {${\bf R}^d$}.
\newblock {\em Teor. Veroyatn. Primen.}, 49(2):400--410, 2004.

\bibitem{bergstrom1945central}
Harald Bergstr\"{o}m.
\newblock On the central limit theorem in the space {$R_k, k>1$}.
\newblock {\em Skand. Aktuarietidskr.}, 28:106--127, 1945.

\bibitem{berry1941accuracy}
Andrew~C. Berry.
\newblock The accuracy of the {G}aussian approximation to the sum of
  independent variates.
\newblock {\em Trans. Amer. Math. Soc.}, 49:122--136, 1941.

\bibitem{bhattacharya1977refinements}
R.~N. Bhattacharya.
\newblock Refinements of the multidimensional central limit theorem and
  applications.
\newblock {\em Ann. Probability}, 5(1):1--27, 1977.

\bibitem{bobkov2003central}
S.~G. Bobkov and A.~Koldobsky.
\newblock On the central limit property of convex bodies.
\newblock In {\em Geometric aspects of functional analysis}, volume 1807 of
  {\em Lecture Notes in Math.}, pages 44--52. Springer, Berlin, 2003.

\bibitem{bobkov13approach}
Sergey~G. Bobkov.
\newblock Entropic approach to {E}. {R}io's central limit theorem for {$W_2$}
  transport distance.
\newblock {\em Statist. Probab. Lett.}, 83(7):1644--1648, 2013.

\bibitem{bobkov18bounds}
Sergey~G. Bobkov.
\newblock Berry-{E}sseen bounds and {E}dgeworth expansions in the central limit
  theorem for transport distances.
\newblock {\em Probab. Theory Related Fields}, 170(1-2):229--262, 2018.

\bibitem{bobkov2013rate}
Sergey~G. Bobkov, Gennadiy~P. Chistyakov, and Friedrich G\"{o}tze.
\newblock Rate of convergence and {E}dgeworth-type expansion in the entropic
  central limit theorem.
\newblock {\em Ann. Probab.}, 41(4):2479--2512, 2013.

\bibitem{bobkov2014berry}
Sergey~G. Bobkov, Gennadiy~P. Chistyakov, and Friedrich G\"{o}tze.
\newblock Berry-{E}sseen bounds in the entropic central limit theorem.
\newblock {\em Probab. Theory Related Fields}, 159(3-4):435--478, 2014.

\bibitem{bonis2019stein}
Thomas Bonis.
\newblock Stein's method for normal approximation in wasserstein distances with
  application to the multivariate central limit theorem.
\newblock {\em arXiv preprint arXiv:1905.13615}, 2019.

\bibitem{bubeck2016entropic}
S{\'e}bastien Bubeck and Shirshendu Ganguly.
\newblock Entropic clt and phase transition in high-dimensional wishart
  matrices.
\newblock {\em International Mathematics Research Notices}, page rnw243, 2016.

\bibitem{chen2011multivariate}
Louis~HY Chen and Xiao Fang.
\newblock Multivariate normal approximation by stein's method: The
  concentration inequality approach.
\newblock {\em arXiv preprint arXiv:1111.4073}, 2011.

\bibitem{chernozhukov2013gaussian}
Victor Chernozhukov, Denis Chetverikov, and Kengo Kato.
\newblock Gaussian approximations and multiplier bootstrap for maxima of sums
  of high-dimensional random vectors.
\newblock {\em Ann. Statist.}, 41(6):2786--2819, 2013.

\bibitem{courtade2017existence}
Thomas~A. Courtade, Max Fathi, and Ashwin Pananjady.
\newblock Existence of {S}tein kernels under a spectral gap, and discrepancy
  bounds.
\newblock {\em Ann. Inst. Henri Poincar\'{e} Probab. Stat.}, 55(2):777--790,
  2019.

\bibitem{eldan2013thin}
Ronen Eldan.
\newblock Thin shell implies spectral gap up to polylog via a stochastic
  localization scheme.
\newblock {\em Geom. Funct. Anal.}, 23(2):532--569, 2013.

\bibitem{eldan2016skorokhod}
Ronen Eldan.
\newblock Skorokhod embeddings via stochastic flows on the space of {G}aussian
  measures.
\newblock volume~52, pages 1259--1280, 2016.

\bibitem{eldan2016gaussian}
Ronen Eldan.
\newblock Gaussian-width gradient complexity, reverse log-{S}obolev
  inequalities and nonlinear large deviations.
\newblock {\em Geom. Funct. Anal.}, 28(6):1548--1596, 2018.

\bibitem{eldan2018regularization}
Ronen Eldan and James~R. Lee.
\newblock Regularization under diffusion and anticoncentration of the
  information content.
\newblock {\em Duke Math. J.}, 167(5):969--993, 2018.

\bibitem{eldan2016information}
Ronen Eldan and Dan Mikulincer.
\newblock Information and dimensionality of anisotropic random geometric
  graphs.
\newblock {\em arXiv preprint arXiv:1609.02490}, 2016.

\bibitem{esseen1942liapounoff}
Carl-Gustav Esseen.
\newblock {\em On the {L}iapounoff limit of error in the theory of
  probability}, volume 28A.
\newblock 1942.

\bibitem{fathi2018stein}
Max Fathi.
\newblock Stein kernels and moment maps.
\newblock {\em Ann. Probab.}, 47(4):2172--2185, 2019.

\bibitem{folland99real}
Gerald~B. Folland.
\newblock {\em Real analysis}.
\newblock Pure and Applied Mathematics (New York). John Wiley \& Sons, Inc.,
  New York, second edition, 1999.
\newblock Modern techniques and their applications, A Wiley-Interscience
  Publication.

\bibitem{follmer1985entropy}
H.~F\"{o}llmer.
\newblock An entropy approach to the time reversal of diffusion processes.
\newblock In {\em Stochastic differential systems ({M}arseille-{L}uminy,
  1984)}, volume~69 of {\em Lect. Notes Control Inf. Sci.}, pages 156--163.
  Springer, Berlin, 1985.

\bibitem{follmer1986time}
H.~F\"{o}llmer.
\newblock Time reversal on {W}iener space.
\newblock In {\em Stochastic processes---mathematics and physics ({B}ielefeld,
  1984)}, volume 1158 of {\em Lecture Notes in Math.}, pages 119--129.
  Springer, Berlin, 1986.

\bibitem{gotze1991rate}
F.~G\"{o}tze.
\newblock On the rate of convergence in the multivariate {CLT}.
\newblock {\em Ann. Probab.}, 19(2):724--739, 1991.

\bibitem{harge2004convex}
Gilles Harg\'{e}.
\newblock A convex/log-concave correlation inequality for {G}aussian measure
  and an application to abstract {W}iener spaces.
\newblock {\em Probab. Theory Related Fields}, 130(3):415--440, 2004.

\bibitem{barron2004fisher}
Oliver Johnson and Andrew Barron.
\newblock Fisher information inequalities and the central limit theorem.
\newblock {\em Probab. Theory Related Fields}, 129(3):391--409, 2004.

\bibitem{klartag2017eldan}
Bo'az Klartag.
\newblock Eldan's stochastic localization and tubular neighborhoods of
  complex-analytic sets.
\newblock {\em J. Geom. Anal.}, 28(3):2008--2027, 2018.

\bibitem{lee2016eldan}
Yin~Tat Lee and Santosh~Srinivas Vempala.
\newblock Eldan's stochastic localization and the kls hyperplane conjecture: An
  improved lower bound for expansion.
\newblock In {\em 2017 IEEE 58th Annual Symposium on Foundations of Computer
  Science (FOCS)}, pages 998--1007. IEEE, 2017.

\bibitem{lehec2013representation}
Joseph Lehec.
\newblock Representation formula for the entropy and functional inequalities.
\newblock {\em Ann. Inst. Henri Poincar\'{e} Probab. Stat.}, 49(3):885--899,
  2013.

\bibitem{marsiglietti2018lower}
Arnaud Marsiglietti and Victoria Kostina.
\newblock A lower bound on the differential entropy of log-concave random
  vectors with applications.
\newblock {\em Entropy}, 20(3):Paper No. 185, 24, 2018.

\bibitem{nagaev1976estimate}
S.~V. Nagaev.
\newblock An estimate of the remainder term in the multidimensional central
  limit theorem.
\newblock In {\em Proceedings of the {T}hird {J}apan-{USSR} {S}ymposium on
  {P}robability {T}heory ({T}ashkent, 1975)}, pages 419--438. Lecture Notes in
  Math., Vol. 550, 1976.

\bibitem{oksendal2003stochastic}
Bernt \O~ksendal.
\newblock {\em Stochastic differential equations}.
\newblock Universitext. Springer-Verlag, Berlin, sixth edition, 2003.
\newblock An introduction with applications.

\bibitem{paouris2006concentration}
G.~Paouris.
\newblock Concentration of mass on convex bodies.
\newblock {\em Geom. Funct. Anal.}, 16(5):1021--1049, 2006.

\bibitem{revuz2013continuous}
Daniel Revuz and Marc Yor.
\newblock {\em Continuous martingales and {B}rownian motion}, volume 293 of
  {\em Grundlehren der Mathematischen Wissenschaften [Fundamental Principles of
  Mathematical Sciences]}.
\newblock Springer-Verlag, Berlin, third edition, 1999.

\bibitem{rio09upper}
Emmanuel Rio.
\newblock Upper bounds for minimal distances in the central limit theorem.
\newblock {\em Ann. Inst. Henri Poincar\'{e} Probab. Stat.}, 45(3):802--817,
  2009.

\bibitem{rio11asymtotic}
Emmanuel Rio.
\newblock Asymptotic constants for minimal distance in the central limit
  theorem.
\newblock {\em Electron. Commun. Probab.}, 16:96--103, 2011.

\bibitem{senatov1981uniform}
V.~V. Senatov.
\newblock Some uniform estimates of the convergence rate in the
  multidimensional central limit theorem.
\newblock {\em Teor. Veroyatnost. i Primenen.}, 25(4):757--770, 1980.

\bibitem{valiant2011estimating}
Gregory Valiant and Paul Valiant.
\newblock Estimating the unseen: an {$n/\log(n)$}-sample estimator for entropy
  and support size, shown optimal via new {CLT}s.
\newblock In {\em S{TOC}'11---{P}roceedings of the 43rd {ACM} {S}ymposium on
  {T}heory of {C}omputing}, pages 685--694. ACM, New York, 2011.

\bibitem{zhai2016multivariate}
Alex Zhai.
\newblock A high-dimensional {CLT} in {$\mathcal{W}_2$} distance with near
  optimal convergence rate.
\newblock {\em Probab. Theory Related Fields}, 170(3-4):821--845, 2018.

\end{thebibliography}

\end{document}